\begin{document}

\newtheorem{lem}{Lemma}[section]
\newtheorem{pro}[lem]{Proposition}
\newtheorem{defi}[lem]{Definition}
\newtheorem{def/not}[lem]{Definition/Notations}

\newtheorem{thm}[lem]{Theorem}
\newtheorem{ques}[lem]{Question}
\newtheorem{cor}[lem]{Corollary}
\newtheorem{rem}[lem]{Remark}
\newtheorem{rqe}[lem]{Remarks}
\newtheorem{exa}[lem]{Example}
\newtheorem{exas}[lem]{Examples}
\newtheorem{obs}[lem]{Observation}
\newtheorem{corcor}[lem]{Corollary of the corollary}
\newtheorem*{ackn}{Acknowledgements}

\newcommand{\C}{\mathbb{C}}
\newcommand{\R}{\mathbb{R}}
\newcommand{\N}{\mathbb{N}}
\newcommand{\Z}{\mathbb{Z}}
\newcommand{\Q}{\mathbb{Q}}
\newcommand{\Proj}{\mathbb{P}}
\newcommand{\Rc}{\mathcal{R}}
\newcommand{\Oc}{\mathcal{O}}
\newcommand{\diff}{\textit{diff}}

\begin{center}

{\Large\bf The Dirichlet problem for the complex Hessian operator in the class $\mathcal{N}_m(H)$}

\end{center}

\begin{center}

{\large Ayoub El Gasmi}

\end{center}



\vspace{1ex}

\noindent{\small{\bf Abstract.}
We prove that, in a $m$-hyperconvex domain in $\mathbb{C}^{n},$ if a non-negative Borel measure is dominated by a complex Hessian measure, then it is a complex Hessian measure of a function in the class $\mathcal{N}_m(H)$. This is an extension of P. $\hbox{\AA}$hg, U. Cegrell, R. Czy$\dot{z}$ and P.H. Hiep's result in \cite{ACCH}.

\vspace{1ex}

\section*{Introduction}\label{section:introduction}
The subsolution theorem due to Kolodziej \cite{Ko2} says that if the Dirichlet problem (\ref{1114}) with nonnegative Borel measure $d\mu$ in a strictly pseudoconvex domain $\Omega\subset\mathbb{C}^{n}$ and continuous
boundary data $\varphi\in C(\partial\Omega)$:
\begin{equation}\label{1114}
\left\{
     \begin{array}{lll}
       u\in\mathrm{PSH}(\Omega)\cap L^{\infty}(\Omega), \\
        (dd^cu)^n=d\mu,\\
        u=\varphi       \;\;\;\; \hbox{on}\;\;\;\; \partial\Omega,
     \end{array}
   \right.
\end{equation}
has subsolution, then $(\ref{1114})$ is solvable. Nguc Cuong Nguyen showed in \cite{Ngo} that the subsolution theorem for the complex Hessian equation is still true, he proved that the existence of a subsolution is a necessary and sufficient
condition for the existence of a bounded solution to the Dirichlet problem for the complex Hessian equation. 
In the other hand, $\hbox{\AA}$hg, Cegrell, Czyz and Hiep obtained in \cite{ACCH} a generalization of Kolodziej's subsolution theorem. More precisely, they proved that if a non-negative Borel measure is dominated by a complex Monge-Amp\`{e}re measure, then it is a complex Monge-Amp\`{e}re measure. 
\par Throughout this paper it is always assumed that $\Omega$ is a bounded $m$-hyperconvex domain (see the next section for the definition of $m$-hyperconvex domain). The purpose of this paper is to rewrite the work of \cite{ACCH} in the case of the complex Hessian equation. With notations introduced in the next section, our main result is the following theorem.\\

\textbf{Main Theorem.}
\textit{Assume that  $\mu$ is a non-negative measure. If there exists a function $\omega \in \mathcal{E}_m$  with $\mu \leq H_m(\omega),$ then for every function
$H\in\mathcal{E}_m\cap \mathcal{MSH}_m(\Omega),$ there exists a function $u \in \mathcal{E}_m$, $\omega+H\leq u\leq H$ such that $H_m(u)=\mu.$ In particular, if $\omega \in \mathcal{N}_m$, then $u \in \mathcal{N}_m(H).$}\\

Recently, in \cite{HP}, Vu Viet Hung and Nguyen Van Phu proved this result in the particular case when $\mu(\Omega)<+\infty$ and $H=0$.

This paper is organized as follows. In Section \ref{7777} we recall some basic properties of $m$-subharmonic functions, the complex Hessian operator and Cegrell classes, and we introduce the class $\mathcal{N}_m(H)$. The main sources are \cite{Chi2}, \cite{Blo1}. For the proofs we refer the reader to these references modulo some simple modifications. In section \ref{77771}, we give the comparison principle for the class $\mathcal{N}_m(H)$. In section \ref{77772} we study the Dirichlet Problem for the complex Hessian operator with continuous bondary data. The last section is devoted to the proof of the main Theorem.\\
\section{Preliminaries}\label{7777}
\subsection{m-subharmonic functions}
In this section, we give some basic properties of admissible functions for the complex Hessian equation. Such functions are called $m$-subharmonic ($m$-sh), they are subharmonic and non-smooth in general. 
Let $1\leq m\leq n.$ Put  $$S_m(\lambda)=\sum_{1\leq j_1<\cdots< j_m\leq n}\lambda_{j_{1}}\ldots\lambda_{j_{m}},$$
called the symetric  function of $\mathbb{R}^{n}$ of degree $m$, which can be determined by
                 $$(\lambda_1+t)\cdots(\lambda_n+t)=\sum_{m=0}^{n}S_m(\lambda)t^{n-m}, \;\;\;\; \hbox{with}\;\;\; t\in \mathbb{R}.$$
We denote $\Gamma_m$ the closure of the  connected component of $\{\lambda\in \mathbb{R}^n: S_m(\lambda)>0\}$ containing $(1,\cdots,1).$ Let $t\geq 0$, we have
 \begin{eqnarray*}
  \Gamma_m= \{\lambda\in \mathbb{R}^n: S_m(\lambda_1+t,\cdots\lambda_n+t)\geq0\} = \{\lambda\in \mathbb{R}^n: \sum_{p=0}^{m}S_m(\lambda)t^{n-m}\geq0\}= \displaystyle\bigcap_{p=0}^{m}\{S_p\geq0\},&&
\end{eqnarray*}
Note that, $\Gamma_n\subset \Gamma_{n-1}\subset \cdots\subset \Gamma_1,$ and by the results in \cite{Ga}, $\Gamma_m$ is convex in $\mathbb{R}^{n}$ and $(S_m)^{\frac{1}{m}}$ is concave in $\Gamma_m$, and by the Maclaurin inequality
$$\left(
    \begin{array}{c}
      n \\
      m \\
    \end{array}
  \right)^{\frac{-1}{m}}(S_m)^{\frac{1}{m}}\leq\left(
    \begin{array}{c}
      n \\
      p \\
    \end{array}
  \right)^{\frac{-1}{p}}(S_p)^{\frac{1}{p}}, \;\;\;\;\;\;\;\;\;\;\; \forall\; 1\leq p\leq m\leq n.
$$
Let $\mathcal{H}$ be the real vector space of complex hermitian matrix $n\times n$, For any  $A\in \mathcal{H}$, let $\lambda(A)=(\lambda_1,\cdots,\lambda_n)\in \mathbb{R}^{n}$ be the vector of the eigenvalues
of $A$. We set
                                         $$\tilde{S}_m(A)=S_m(\lambda(A)),$$
\hbox{and\;define\;the\;cone}
$$\tilde{\Gamma}_m:=\{A\in \mathcal{H}: \lambda(A)\in \Gamma_m\}=\{A\in \mathcal{H}: \tilde{S}_k(A)\geq0, \forall \;1\leq k\leq m\}.$$
Let $\alpha$ be a $(1,1)$-form such that
$$\alpha=\frac{i}{2}\sum_{j,k}a_{j\bar{k}}dz_j\wedge d\bar{z}_k,$$
where $A=(a_{j\bar{k}})$ is a complex hermitian matrix. After diagonalizing the matrix $A$ we see that
           $$\alpha^m\wedge \beta^{n-m}=\frac{m!(n-m)!}{n!}\tilde{S}_m(\alpha)\beta^{n}$$
where $\beta=dd^{c}\vert z\vert^{2}$ is the standard K\"{a}hler form in $\mathbb{C}^n$, The last equality allows us to define
   $$\breve{\Gamma}_m:=\{\alpha\in \mathbb{C}_{1,1}:\; \alpha\wedge \beta^{n-1}\geq0,\alpha^2\wedge \beta^{n-2}\geq0,\cdots,\alpha^m\wedge \beta^{n-m}\geq0\},$$
where $\mathbb{C}_{(1,1)}$ is the space of real $(1,1)$-forms with constant coefficients in $\mathbb{C}^n$. Note that a $(1,1)$-form belonging to $\breve{\Gamma}_m$ is called $m$-positive and if $T$ is a current  of bidegree $(n-k, n-k)$, with $k \leq m$. Then $T$ is called $m$-positive if for all $m$-positive
$(1,1)$-forms $\alpha_1,...,\alpha_{k}$ we have 
$$\alpha_1\wedge\alpha_2\wedge\ldots\wedge\alpha_k\wedge T\geq 0.$$
\par Let $M:\mathbb{C}_{(1,1)}\longrightarrow \mathbb{R}$ be the polarized form of  $\widetilde{S}_m$ (i.e $M$ is is linear in every variable,
symmetric and $M(\alpha,\cdots,\alpha)=\tilde{S}_m(\alpha)$, for any $\alpha \in \mathbb{C}_{(1,1)}).$
by the Garding inequality  (see \cite{Ga}) we have
$$M(\alpha_1,\cdots,\alpha_m)\geq\tilde{S}_m(\alpha_1)^{1/m},\ldots,\tilde{S}_m(\alpha_m)^{1/m},\;\; \alpha_1,\ldots, \alpha_m \in \breve{\Gamma}_{m}.$$
\begin{pro}
If $\alpha_1,\ldots, \alpha_p \in \breve{\Gamma}_m, 1\leq p\leq m$, then $\alpha_1\wedge\alpha_2\wedge\ldots\wedge\alpha_p\wedge\beta^{n-m}\geq 0.$
\end{pro}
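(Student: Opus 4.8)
The plan is to reduce the general case $1\le p\le m$ to the top‑degree case $p=m$, where Garding's inequality applies directly.

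First I would settle the case $p=m$. Identifying $(n,n)$‑forms with real numbers via the nowhere‑vanishing form $\beta^n$, the assignment $(\alpha_1,\dots,\alpha_m)\longmapsto \alpha_1\wedge\cdots\wedge\alpha_m\wedge\beta^{n-m}$ is symmetric (since $(1,1)$‑forms commute under $\wedge$) and $\R$‑multilinear on $\mathbb{C}_{(1,1)}$, and by the identity $\alpha^m\wedge\beta^{n-m}=\frac{m!(n-m)!}{n!}\,\widetilde S_m(\alpha)\,\beta^n$ its diagonal is $\frac{m!(n-m)!}{n!}\,\widetilde S_m$. As the polarized form $M$ of $\widetilde S_m$ is the \emph{unique} symmetric multilinear form with that diagonal, polarization gives
\[
\alpha_1\wedge\cdots\wedge\alpha_m\wedge\beta^{n-m}=\frac{m!(n-m)!}{n!}\,M(\alpha_1,\dots,\alpha_m)\,\beta^n .
\]
If $\alpha_1,\dots,\alpha_m\in\breve\Gamma_m$, then in particular $\alpha_i^m\wedge\beta^{n-m}\ge 0$, so $\widetilde S_m(\alpha_i)\ge 0$ for every $i$, and Garding's inequality yields $M(\alpha_1,\dots,\alpha_m)\ge \widetilde S_m(\alpha_1)^{1/m}\cdots\widetilde S_m(\alpha_m)^{1/m}\ge 0$. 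Hence $\alpha_1\wedge\cdots\wedge\alpha_m\wedge\beta^{n-m}\ge 0$.

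For $1\le p<m$, set $T:=\alpha_1\wedge\cdots\wedge\alpha_p\wedge\beta^{n-m}$, a form of bidegree $(n-m+p,\,n-m+p)$. To prove $T\ge 0$ it suffices to show $T\wedge\gamma_1\wedge\cdots\wedge\gamma_{m-p}\ge 0$ as a measure for all rank‑one forms $\gamma_j:=i\,\ell_j\wedge\bar\ell_j$ with $\ell_j$ constant $(1,0)$‑forms, since every strongly positive $(m-p,m-p)$‑form is a non‑negative combination of such products. Each $\gamma_j$ lies in $\breve\Gamma_m$: it is a non‑negative $(1,1)$‑form, so $\gamma_j\wedge\beta^{n-1}\ge 0$, while $\gamma_j^{\,k}=0$ for $k\ge 2$ (because $\ell_j\wedge\ell_j=0$), so $\gamma_j^{\,k}\wedge\beta^{n-k}\ge 0$ for all $1\le k\le m$. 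Thus $\alpha_1,\dots,\alpha_p,\gamma_1,\dots,\gamma_{m-p}$ are $m$ elements of $\breve\Gamma_m$, and by the case $p=m$,
\[
T\wedge\gamma_1\wedge\cdots\wedge\gamma_{m-p}=\alpha_1\wedge\cdots\wedge\alpha_p\wedge\gamma_1\wedge\cdots\wedge\gamma_{m-p}\wedge\beta^{n-m}\ge 0 ,
\]
which is exactly what we wanted.

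The computations here are routine once Garding's inequality and the polarization identity are in place; I expect the only genuinely delicate points to be the two facts used in the last step: that positivity of a constant‑coefficient $(q,q)$‑form is detected by pairing with the decomposable strongly positive forms $i\ell_1\wedge\bar\ell_1\wedge\cdots\wedge i\ell_{n-q}\wedge\bar\ell_{n-q}$, and that the rank‑one forms $i\ell\wedge\bar\ell$ do belong to $\breve\Gamma_m$. The latter is precisely where the non‑strict inequalities in the definition of $\breve\Gamma_m$ are needed, since a rank‑one $(1,1)$‑form sits on the boundary $\widetilde S_2=\cdots=\widetilde S_m=0$ of the Garding cone.
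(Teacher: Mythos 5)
Your proof is correct. The paper itself gives no argument for this proposition (it is quoted from the references \cite{Blo1}, \cite{Chi1}, \cite{Chi2}), and your route --- polarization plus Garding's inequality for the top-degree case $p=m$, then reduction of $p<m$ to $p=m$ by wedging with rank-one forms $i\ell\wedge\bar\ell$, which lie in $\breve{\Gamma}_m$ since their powers vanish --- is exactly the standard proof found there.
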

\begin{defi}
Let $\Omega$ be a bounded domain in $\mathbb{C}^{n}$. Then $\Omega$ is called $m$-hyperconvex if there exists a continuous $m$-sh function $\varphi:\; \Omega \rightarrow \mathbb{R}^{-}$ such that $\{\varphi<c\}\Subset\Omega$, for every $c<0.$
\end{defi}
In connection with the results above, we give the definition of a $m$-sh function due to Blocki (see \cite{Blo1}).
\begin{defi}
Let $u:\Omega\longrightarrow \mathbb{R}\cup\{-\infty\}$  be a subharmonic function in $\Omega.$
\begin{itemize}
  \item[$(i)$] If $u\in C^2(\Omega)$, then $u$ is $m$-sh if the form $dd^cu$ belongs pointwise to $\breve{\Gamma}_m$ .
  \item[$(ii)$] For non-smooth case, $u$ is called $m$-sh if the  inequality
   $$dd^cu\wedge\alpha_1\ldots \wedge \alpha_{m-1}\wedge \beta^{n-m}\geq0,\;\;\;\;\;\;\alpha_1,...,\alpha_{m-1}\in\breve{\Gamma}_m,$$
holds in the weak sense of currents in $\Omega$.
\end{itemize}
\end{defi}
We denote by $\mathcal{SH}_m(\Omega)$ the set of all $m$-sh functions in $\Omega$. Blocki observed that up to a point pluripotential theory can be adapted to $m$-sh functions.  We recall some
properties of $m$-subharmonic  functions.
\begin{pro}(\cite{Blo1}).
\begin{description}
  \item[1).] $\mathcal{PSH}=\mathcal{SH}_{n}\subset\mathcal{SH}_{n-1}\subset\ldots\subset\mathcal{SH}_1\subset \mathcal{SH}$.
  \item[2).] If $u, v\in \mathcal{SH}_m(\Omega)$ then $\lambda u+ \nu v \in \mathcal{SH}_m(\Omega)$, $\forall \lambda, \mu\geq 0.$
  \item[3).] Let $[u_j]_{j\in\mathbb{N}}\subset$ be a decreasing sequence of $m$-subharmonic functions in $\Omega$ that converges to a function $u$, then $u\in\mathcal{SH}_m(\Omega).$
  \item[4).] If $u\in \mathcal{SH}_m(\Omega)$ and $f$ is a  convex increasing function, then $f\circ u\in \mathcal{SH}_m(\Omega).$
  \item[5).] If $u\in \mathcal{SH}_m(\Omega)$, then the standard regularization $u\ast\rho_{\epsilon}\in\mathcal{SH}_m(\Omega_{\epsilon})$, where $\Omega_{\epsilon}:=\{z\in\Omega\; |\; dis(z,\partial\Omega>\epsilon) \}$, for $0<\epsilon\ll1.$
  \item[6).] If $[u_j]\subset\mathcal{SH}_m(\Omega)\cap L_{loc}^{\infty},$ then $(\sup u_j)^{*}\in\mathcal{SH}_m(\Omega)$, where $\theta^{*}$ denotes the upper semicontinuous regularisation of $\theta.$
\end{description}
\end{pro}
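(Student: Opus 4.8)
The plan is to reduce every assertion to the case of smooth functions, where all the statements become elementary facts about the cone $\breve\Gamma_m$ together with the Gårding and Maclaurin inequalities recalled above, and then to transfer to the general case by mollification and monotone limits. Concretely I would establish items 3) and 5) first, since their proofs do not use the others, and then exploit them as the two workhorses for the rest.

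For 5), write $u_\epsilon=u\ast\rho_\epsilon$. Because the coefficients of $\alpha_1,\dots,\alpha_{m-1},\beta$ are constant, for a nonnegative test function $\varphi$ on $\Omega_\epsilon$ one has $\langle dd^c u_\epsilon\wedge\alpha_1\wedge\cdots\wedge\alpha_{m-1}\wedge\beta^{n-m},\varphi\rangle=\langle dd^c u\wedge\alpha_1\wedge\cdots\wedge\alpha_{m-1}\wedge\beta^{n-m},\varphi\ast\check\rho_\epsilon\rangle\ge0$, since $\varphi\ast\check\rho_\epsilon\ge0$ and $u$ is $m$-sh; as $u_\epsilon$ is smooth this is a pointwise inequality, and $u_\epsilon$ is smooth and subharmonic, so $u_\epsilon\in\mathcal{SH}_m(\Omega_\epsilon)$. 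For 3), a decreasing limit of subharmonic functions is subharmonic (or $\equiv-\infty$); setting $\Theta:=\alpha_1\wedge\cdots\wedge\alpha_{m-1}\wedge\beta^{n-m}$, a closed constant $(n-1,n-1)$-form, one has $dd^c u_j\wedge\Theta=dd^c(u_j\Theta)\to dd^c(u\Theta)=dd^c u\wedge\Theta$ weakly because $u_j\Theta\to u\Theta$ in $L^1_{loc}$, while each $dd^c u_j\wedge\Theta\ge0$, whence $dd^c u\wedge\Theta\ge0$.

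With 3) and 5) in hand, 1) reduces to the smooth case: for $u\in C^2$, $u$ is $m$-sh iff $\lambda(dd^c u)\in\Gamma_m$ pointwise, because $\alpha^k\wedge\beta^{n-k}=\tfrac{k!(n-k)!}{n!}\tilde S_k(\alpha)\beta^n$; so the chain $\Gamma_n\subset\cdots\subset\Gamma_1$ yields $\mathcal{SH}_n\cap C^2\subset\cdots\subset\mathcal{SH}_1\cap C^2$, the extremes being $\mathcal{PSH}\cap C^2$ (positive orthant) and $\{\Delta u\ge0\}\cap C^2$, and the general inclusions follow by applying 5) and then 3) to $u_\epsilon\searrow u$. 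Item 2) is immediate from the linearity of $dd^c$ and of the defining inequality. For 4), in the smooth case $dd^c(f\circ u)=f'(u)\,dd^c u+f''(u)\,du\wedge d^c u$; wedging with $\Theta$ and using $f'\ge0$, $f''\ge0$, the $m$-sh-ness of $u$ for the first term, and the fact that $du\wedge d^c u$ is a positive $(1,1)$-form — hence lies in $\breve\Gamma_n\subset\breve\Gamma_m$, so $du\wedge d^c u\wedge\alpha_1\wedge\cdots\wedge\alpha_{m-1}\wedge\beta^{n-m}\ge0$ by the Proposition that a product of at most $m$ forms of $\breve\Gamma_m$ with $\beta^{n-m}$ is nonnegative — for the second term, gives $dd^c(f\circ u)\wedge\Theta\ge0$; the general case follows by approximating $f$ from above by smooth convex increasing $f_k\searrow f$ and $u$ by $u_\epsilon\searrow u$ and invoking 3) twice.

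Item 6) is the one requiring real work and is the main obstacle. I would first show $\max(u,v)\in\mathcal{SH}_m(\Omega)$ for $u,v\in\mathcal{SH}_m(\Omega)$: for a convex $\chi:\mathbb{R}^2\to\mathbb{R}$ that is non-decreasing in each variable and for smooth $u,v$, positive semidefiniteness of the Hessian of $\chi$ makes $\sum_{i,j}\chi_{ij}\,du_i\wedge d^c u_j$ a positive $(1,1)$-form, so the computation of 4) shows $\chi(u,v)\in\mathcal{SH}_m$; taking $\chi$ to be the regularized maximum $\max_\delta$, mollifying $u,v$, and letting $\delta\searrow0$ with 3) gives $\max(u,v)\in\mathcal{SH}_m$. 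Then, given a locally uniformly bounded-above family $\{u_j\}\subset\mathcal{SH}_m(\Omega)$, Choquet's lemma lets us take the family countable, and $v_N:=\max(u_1,\dots,u_N)\in\mathcal{SH}_m$ increases to some $v$ with $v^*=(\sup_j u_j)^*$. It remains to see that the upper semicontinuous regularization $v^*$ of an increasing, locally uniformly bounded-above sequence of $m$-sh functions is $m$-sh: $v^*$ is subharmonic, and since $v=v^*$ off a set of Lebesgue measure zero (the classical fact for increasing limits of subharmonic functions), $v_N\Theta\to v^*\Theta$ in $L^1_{loc}$, hence $dd^c v_N\wedge\Theta\to dd^c v^*\wedge\Theta$ weakly with nonnegative left-hand sides, so $dd^c v^*\wedge\Theta\ge0$. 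The delicate ingredients are exactly the $m$-sh-ness of $\max$ and the measure-zero statement for increasing limits, which is where the Błocki-adapted potential theory is genuinely used.
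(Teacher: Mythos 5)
The paper offers no proof of this proposition: it is quoted verbatim from B\l{}ocki's paper \cite{Blo1}, with the author explicitly deferring all proofs to that reference. Your reconstruction is correct and is essentially the standard argument found there (mollification plus decreasing limits as the two workhorses, the smooth case handled through the cones $\Gamma_m$ and G\r{a}rding's theory, and $\max$ via the regularized maximum), so there is nothing to compare against in the paper itself. One point worth making explicit in your item 5): you verify that $dd^c u_\epsilon\wedge\alpha_1\wedge\cdots\wedge\alpha_{m-1}\wedge\beta^{n-m}\geq 0$ for all $\alpha_i\in\breve\Gamma_m$ and conclude $u_\epsilon\in\mathcal{SH}_m(\Omega_\epsilon)$; since $u_\epsilon$ is smooth, the definition actually asks that $dd^c u_\epsilon$ lie pointwise in $\breve\Gamma_m$, and the passage from the former to the latter is the dual characterization of the G\r{a}rding cone (if $M(\alpha,\alpha_1,\dots,\alpha_{m-1})\geq 0$ for all $\alpha_i\in\breve\Gamma_m$ then $\alpha\in\breve\Gamma_m$), which is true but should be invoked, not left implicit; the same silent identification of the two definitions for smooth functions underlies your items 1), 4) and 6). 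Also note that in 6) you correctly restore the hypothesis, omitted in the paper's statement, that the family be locally uniformly bounded above, without which $(\sup u_j)^*$ need not be $m$-sh.
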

For locally bounded $m$-sh functions, we can inductively define a closed nonnegative current (following Bedford and Taylor for plurisubharmonic functions).
$$dd^cu_1\wedge\ldots\wedge dd^cu_k\wedge\beta^{n-m}:=dd^c(u_1dd^cu_2\wedge\ldots\wedge dd^cu_k\wedge\beta^{n-m}),$$
where $u_1,\ldots, u_k\in \mathcal{SH}_m(\Omega)\cap L_{loc}^{\infty}(\Omega).$
In particular, we define the nonnegative Hessian measure for a function $u\in \mathcal{SH}_m(\Omega)\cap L_{loc}^{\infty}(\Omega)$,
$$H_m(u)=(dd^cu)^m\wedge\beta^{n-m}.$$
\subsection{Cegrell's classes and Approximation of $m$-sh functions}
The following classes of $m$-sh functions were introduced by Chinh in \cite{Chi1} and \cite{Chi2}.
\begin{defi}
\begin{itemize}
\item We denote $\mathcal{E}^0_m$ the class of bounded functions that is belong to $\mathcal{SH}_{m}^{-}(\Omega)$ such that
$\displaystyle\lim_{z\rightarrow \xi}u(z)=0$, $\forall\xi\in\partial\Omega$ and $\displaystyle\int_{\Omega}H_m(u)<+\infty.$
\item Let $u \in \mathcal{SH}_{m}^{-}(\Omega)$, we say that $u$  belongs to $\mathcal{E}_m(\Omega)$ (shortly $\mathcal{E}_m$) if for each $z_0 \in \Omega$, there exist an open neighborhood $U \subset \Omega$ of $z_0$ and a decreasing sequence $[u_j] \subset \mathcal{E}_m^0$ such that 
  $u_j \downarrow u$ on $U$ and $\displaystyle\sup_j\int_\Omega H_m(u_j)<+\infty.$
\item We denote by $\mathcal{F}_m(\Omega)$ (or $\mathcal{F}_m$) the class of functions $u \in \mathcal{SH}_{m}^{-}(\Omega)$ such that there exists a sequence $(u_j) \subset \mathcal{E}_m^0$  decreasing to $u$ in $\Omega$ and $\displaystyle\sup_j\int_\Omega H_m(u_j)<+\infty.$
\item For every $p\geq 1$, $\mathcal{E}^p_m$ denote the class of functions $\psi \in \mathcal{SH}^{-}_m(\Omega)$ such that there
exists a decreasing sequence  $[\psi_j]\subset\mathcal{E}^0_m$ such that $\displaystyle\lim_{j\rightarrow +\infty}\psi_j(z)=\psi(z),$ and  $\sup_j\displaystyle\int_{\Omega}(-\psi_{j})^pH_m(\psi_j)<+\infty.$\\
\;\;\; If moreover $\displaystyle\sup_j\int_{\Omega}H_m(\psi_j)<+\infty$ then, by definition, $\psi\in \mathcal{F}^p_m.$
\end{itemize}
\end{defi}
\begin{pro}\label{29}
Suppose $u_1,...,u_m \in\mathcal{F}_m$ and $h\in\mathcal{SH}^{-}_m(\Omega)$, if $u_{1}^{j},...,u_{m}^{j}$ is a sequence of functions in $\mathcal{E}_{m}^{0}$ decreasing to $u_1,...,u_m$ respectively, as $j\rightarrow+\infty$ and $\displaystyle\int hdd^{c}u^{1}\wedge...\wedge dd^{c}u^{n}<+\infty,$ then we have the following
\begin{equation}\label{1500}
\lim_{j\rightarrow+\infty}\int hdd^{c}g^{j}_{1}\wedge...\wedge dd^{c}g^{j}_{m}\wedge\beta^{n-m}=\int hdd^{c}u_{1}\wedge...\wedge dd^{c}u_{m}\wedge\beta^{n-m}.
\end{equation} 
\begin{equation}\label{1501}
hdd^{c}g^{j}_{1}\wedge...\wedge dd^{c}g^{j}_{m}\wedge\beta^{n-m} \;converges\; weakly\; to\; hdd^{c}u_{1}\wedge...\wedge dd^{c}u_{m}\wedge\beta^{n-m}.
\end{equation}
\end{pro}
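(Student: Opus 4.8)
This proposition is the $m$-subharmonic analogue of Cegrell's convergence theorem for the complex Monge--Amp\`ere operator on the class $\mathcal{F}$, and the plan is to adapt Cegrell's proof, using the $m$-subharmonic potential theory of \cite{Blo1} and the $m$-subharmonic Cegrell classes of \cite{Chi2} (here the $g^j_i$ of the conclusion are the $u^j_i$ of the hypothesis, and the integrability assumption should be read as $\int_\Omega(-h)\,dd^cu_1\wedge\cdots\wedge dd^cu_m\wedge\beta^{n-m}<+\infty$). Write $\mu_j:=dd^cu_1^j\wedge\cdots\wedge dd^cu_m^j\wedge\beta^{n-m}$ and $\mu:=dd^cu_1\wedge\cdots\wedge dd^cu_m\wedge\beta^{n-m}$. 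Since each $u_s\in\mathcal{F}_m$, the measure $\mu$ is well defined with finite mass, the masses $\mu_j(\Omega)$ are bounded uniformly in $j$ by the defining bound $\sup_j\int_\Omega H_m(u_s^j)<+\infty$ together with the Maclaurin inequality for mixed Hessian measures, and the Bedford--Taylor theory for decreasing sequences of bounded $m$-sh functions (\cite{Blo1}) gives $\mu_j\to\mu$ weakly. Because $h\le0$ is upper semicontinuous, this already yields $\limsup_j\int_\Omega h\,d\mu_j\le\int_\Omega h\,d\mu$; the whole content of the statement is the reverse inequality, namely that no mass of the $\mu_j$ escapes into the polar set $\{h=-\infty\}$ in the limit.

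First I would establish the statement when all $u_s$ lie in $\mathcal{E}_m^0$ and the weight is bounded, say replaced by $g\in\mathcal{SH}_m^-(\Omega)\cap L^\infty(\Omega)$. This is the bounded-function case of the weighted Bedford--Taylor convergence, which holds in the $m$-subharmonic setting exactly as in the plurisubharmonic one (\cite{Blo1}, \cite{Chi2}): $\int_\Omega g\,d\mu_j\to\int_\Omega g\,d\mu$ and $g\mu_j\to g\mu$ weakly. If one prefers to see it directly, it comes from the telescoping identity
\[
\mu_j-\mu=\sum_{s=1}^m dd^cu_1\wedge\cdots\wedge dd^cu_{s-1}\wedge(dd^cu_s^j-dd^cu_s)\wedge dd^cu_{s+1}^j\wedge\cdots\wedge dd^cu_m^j\wedge\beta^{n-m}
\]
together with integration by parts: transferring the operator off the $s$-th factor onto $g$, the $s$-th summand of $\int_\Omega g\,(\mu_j-\mu)$ takes the form $\int_\Omega(u_s^j-u_s)\,dd^cg\wedge R_s^j$, where $R_s^j$ is a nonnegative closed current of mass bounded uniformly in $j$; since $0\le u_s^j-u_s\downarrow0$, each summand tends to $0$.

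Next I would remove the boundedness of the weight by truncating: set $h^k:=\max(h,-k)\in\mathcal{SH}_m^-(\Omega)\cap L^\infty(\Omega)$, so $h^k\downarrow h$ and $\int_\Omega h^k\,d\mu\to\int_\Omega h\,d\mu$ by monotone convergence — this is where the finiteness of $\int_\Omega(-h)\,d\mu$ enters. The preceding step applies to each $h^k$, so everything reduces to bounding the tails
\[
\int_\Omega(h^k-h)\,d\mu_j=\int_{\{h<-k\}}(-k-h)\,d\mu_j\le\int_{\{h<-k\}}(-h)\,d\mu_j
\]
uniformly in $j$. This is the crux: I would derive it from a Cegrell-type stability estimate — proved through the comparison principle and integration by parts, and using the uniform mass bounds on the $u_s^j$ — dominating $\int_{\{h<-k\}}(-h)\,d\mu_j$ by a quantity that tends to $0$ as $k\to\infty$ independently of $j$. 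Granting it, for any $\varphi\in C^\infty_0(\Omega)$ the quantities $\int_\Omega\varphi h\,d\mu_j$ and $\int_\Omega h\,d\mu_j$ are squeezed between the corresponding $h^k$-integrals up to an error uniform in $j$ and small for large $k$, which yields both conclusions.

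Finally, to pass from $\mathcal{E}_m^0$ to a general $u_s\in\mathcal{F}_m$, I would use that each $u_s$ admits a defining sequence $w_s^p\in\mathcal{E}_m^0$ with $w_s^p\downarrow u_s$ and $\sup_p\int_\Omega H_m(w_s^p)<+\infty$, and compare $\mu_j$ with the measures built from the $w_s^p$ via a diagonal argument exploiting the monotonicities $u_s^j\downarrow u_s$ and $w_s^p\downarrow u_s$, reducing to the case already handled. The main obstacle in the whole plan is the uniform-in-$j$ tail estimate of the third paragraph — equivalently, ruling out concentration of the masses $\mu_j$ on $\{h=-\infty\}$ — which is precisely the delicate point of Cegrell's proof and the only place where the integrability hypothesis is genuinely needed.
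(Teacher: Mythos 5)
Your overall architecture is reasonable, but the proof does not close: the step you yourself identify as the crux --- the uniform-in-$j$ tail estimate $\sup_j\int_{\{h<-k\}}(-h)\,d\mu_j\to 0$ as $k\to\infty$ --- is never proved, only announced as following from ``a Cegrell-type stability estimate''. That estimate is precisely the whole difficulty (it is equivalent to ruling out concentration of weighted mass on $\{h=-\infty\}$), so as written the argument is circular at its central point. It is also not the route the paper, via the results it cites from \cite{Chi2}, actually takes; the standard argument avoids tail estimates entirely. One approximates $h$ from above by $h_k\in\mathcal{E}_m^0\cap C(\overline{\Omega})$ decreasing pointwise to $h$, and observes that $\int_\Omega(-h_k)\,d\mu_j$ is nondecreasing in $j$ (partial comparison, i.e.\ the analogue of Lemma \ref{18}, applied one factor at a time since the $u_s^j$ decrease) and nondecreasing in $k$, so the iterated suprema over $j$ and $k$ commute. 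For fixed $k$ the limit in $j$ equals $\int_\Omega(-h_k)\,d\mu$, because $-h_k$ is bounded, continuous, vanishes on $\partial\Omega$, and $\mu_j\to\mu$ weakly with uniformly bounded masses; the subsequent limit in $k$ gives $\int_\Omega(-h)\,d\mu$ by monotone convergence, which is the only place the hypothesis $\int_\Omega(-h)\,d\mu<+\infty$ enters. This is what the paper means by ``follow the argument in \cite[Proposition 5.1]{Chi2}''.

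Two further points. First, your last paragraph does not reduce anything: the hypotheses already provide a defining sequence $u_s^j\in\mathcal{E}_m^0$ for each $u_s\in\mathcal{F}_m$, and comparing it with a second defining sequence $w_s^p$ is itself the (nontrivial) well-definedness of the mixed operator on $\mathcal{F}_m$; moreover your tail estimate would in any case have to be established for the actual measures $\mu_j$, whose limit functions lie in $\mathcal{F}_m$, not for auxiliary ones with bounded limits. Second, for the weak convergence (\ref{1501}) there is a much shorter argument once (\ref{1500}) is known, and it is the one the paper uses: for $0\le\xi\in C_0^\infty(\Omega)$ the function $(-h)\xi$ is nonnegative and lower semicontinuous, so any weak cluster value $\nu$ of $(-h)\mu_j$ satisfies $\nu\ge(-h)\mu$, while $\nu(\Omega)\le\lim_j\int_\Omega(-h)\,d\mu_j=\int_\Omega(-h)\,d\mu$ forces $\nu=(-h)\mu$; no squeezing by truncations of $h$ is needed.
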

\begin{proof}
It follows from \cite[Theorem 3.11]{Chi2} that $$dd^{c}g^{j}_{1}\wedge...\wedge dd^{c}g^{j}_{m}\wedge\beta^{n-m} \hbox{\;converges\; weakly\;to}\; hdd^{c}u_{1}\wedge...\wedge dd^{c}u_{m}\wedge\beta^{n-m}.$$ Furtheremore, since  $\Omega$ is open then
$$\displaystyle\int dd^{c}u_{1}\wedge...\wedge dd^{c}u_{m}\wedge\beta^{n-m}\leq\displaystyle\liminf_{j\rightarrow+\infty}\int dd^{c}g^{j}_{1}\wedge...\wedge dd^{c}g^{j}_{m}\wedge\beta^{n-m}<+\infty.$$
If we suppose first that $h$ $\in$ $\mathcal{E}_{m}^{0}$, then by \cite[Theorem 3.13]{Chi2} we have
$\displaystyle\lim_{j\rightarrow+\infty}\int hdd^{c}g^{j}_{1}\wedge...\wedge dd^{c}g^{j}_{m}\wedge\beta^{n-m}=\displaystyle\int hdd^{c}u_{1}\wedge...\wedge dd^{c}u_{m}\wedge \beta^{n-m}.$
Suppose now that $h\in\mathcal{SH}_{m}^{-}(\Omega),$ then it follows from \cite[Theorem 3.1]{Chi2} that for each $j$, we can choose
$h_j$ $\in$ $\mathcal{E}_{m}^{0}\cap\mathrm{C}(\Omega)$ decreasing to $h$. So, to finish the proof of (\ref{1500}) it suffices to follow the argument in \cite[Proposition 5.1]{Chi2}. For (\ref{1501}), take $\xi\in C^{\infty}_{0}(\Omega)$, then $h\xi$ is  upper semicontinuous. Thus,
$$\liminf_{j\rightarrow+\infty}\int_{\Omega}(-h)\xi dd^{c}g^{1}_{j}\wedge 
...\wedge dd^{c}g^{m}_{j}\wedge\beta^{n-m}\geq\int_{\Omega}(-h)\xi dd^{c}u^{1}\wedge...\wedge dd^{c}u^{m}\wedge\beta^{n-m}.$$
Let $\nu$ be the weak limit of $hdd^{c}g^{1}_{j}\wedge ...\wedge dd^{c}g^{m}_{j}\wedge\beta^{n-m}$, then 
$\nu\geq -hdd^{c}u^{1}\wedge...\wedge dd^{c}u^{m}\wedge\beta^{n-m}$, On the other hand,
$$\int_{\Omega}d\nu\leq \lim_{j\rightarrow+\infty}\int_{\Omega}-hdd^{c}g^{1}_{j}\wedge ...\wedge dd^{c}g^{m}_{j}\wedge\beta^{n-m}=\int_{\Omega}-hdd^{c}u^{1}\wedge...\wedge dd^{c}u^{m}\wedge\beta^{n-m}.$$
Therefore $\nu=-hdd^{c}u^{1}\wedge...\wedge dd^{c}u^{m}\wedge\beta^{n-m}.$
\end{proof}
\begin{thm}\label{40}\cite[Proposition 3.3]{HP}.
If $u_1,...,u_m$ $\in$ $\mathcal{F}_m$ and $h$ $\in$ $SH_m(\Omega)^{-}_m$. Then
$$
\displaystyle\int-hdd^{c}u_{1}\wedge...\wedge dd^{c}u_{m}\wedge \beta^{n-m}\leq\displaystyle\prod_{k=1}^{m}\left(\int -hH_m(u_{k})\right)^{1/m}.$$
\end{thm}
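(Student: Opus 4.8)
If $\int_{\Omega}-h\,H_m(u_k)=+\infty$ for some $k$, the right-hand side is $+\infty$ and there is nothing to prove; so assume all these integrals are finite. The plan is: (i) reduce to bounded data, (ii) establish a two-function Cauchy--Schwarz inequality, (iii) bootstrap to $m$ functions.

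For (i): first I would reduce to $u_1,\dots,u_m\in\mathcal{E}^0_m$. Pick $u_k^{j}\in\mathcal{E}^0_m$ decreasing to $u_k$. The mixed Hessian currents $dd^cu_1^{j}\wedge\dots\wedge dd^cu_m^{j}\wedge\beta^{n-m}$ converge weakly to $dd^cu_1\wedge\dots\wedge dd^cu_m\wedge\beta^{n-m}$ (\cite[Theorem 3.11]{Chi2}), and $\int-h\,H_m(u_k^{j})\to\int-h\,H_m(u_k)<+\infty$ by Proposition \ref{29} applied with $u_k$ counted $m$ times. Since $-h\ge0$ is lower semicontinuous, weak convergence yields
\[
\int -h\,dd^cu_1\wedge\dots\wedge dd^cu_m\wedge\beta^{n-m}\ \le\ \liminf_{j}\int -h\,dd^cu_1^{j}\wedge\dots\wedge dd^cu_m^{j}\wedge\beta^{n-m},
\]
so it suffices to treat $u_k\in\mathcal{E}^0_m$. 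With the $u_k$ now fixed, all the measures involved are independent of $h$; choosing $h_j\in\mathcal{E}^0_m\cap C(\Omega)$ decreasing to $h$ (\cite[Theorem 3.1]{Chi2}) and using $-h_j\uparrow-h$ together with monotone convergence, the inequality for $h$ follows from the inequality for each $h_j$. Thus I may also assume $h\in\mathcal{E}^0_m$.

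For (ii): the core claim is that for $a,b\in\mathcal{E}^0_m$ and $T:=dd^cw_1\wedge\dots\wedge dd^cw_{m-2}\wedge\beta^{n-m}$ with $w_1,\dots,w_{m-2}\in\mathcal{E}^0_m$,
\[
\int -h\,dd^ca\wedge dd^cb\wedge T\ \le\ \Big(\int -h\,(dd^ca)^2\wedge T\Big)^{1/2}\Big(\int -h\,(dd^cb)^2\wedge T\Big)^{1/2}.
\]
Replacing $b$ by $\lambda b$ ($\lambda>0$) and optimizing in $\lambda$, this is equivalent to $\int -h\,\big(dd^c(a-b)\big)^2\wedge T\ge0$. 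To prove that, set $w:=a-b$ (bounded, vanishing on $\partial\Omega$) and $\Sigma:=dd^ch\wedge T$, a closed current which is a product of the $m$-positive currents $dd^ch,dd^cw_1,\dots,dd^cw_{m-2}$ with $\beta^{n-m}$. Since $h$ and $w$ vanish on $\partial\Omega$, integrating by parts once gives
\[
\int -h\,(dd^cw)^2\wedge T=\int -h\,dd^cw\wedge(dd^cw\wedge T)=-\int w\,dd^ch\wedge dd^cw\wedge T=-\int w\,dd^cw\wedge\Sigma,
\]
and Stokes' theorem applied to the exact form $d\big(w\,d^cw\wedge\Sigma\big)$, whose integral over $\Omega$ equals $\int_{\partial\Omega}w\,d^cw\wedge\Sigma=0$, gives $\int w\,dd^cw\wedge\Sigma=-\int dw\wedge d^cw\wedge\Sigma$. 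Hence
\[
\int -h\,(dd^cw)^2\wedge T=\int dw\wedge d^cw\wedge dd^ch\wedge T\ \ge\ 0,
\]
because $dw\wedge d^cw$ is a positive $(1,1)$-form and $dd^ch\wedge T$ is $m$-positive, so their wedge product is a nonnegative measure. The integration-by-parts and Stokes identities are the usual ones for the Cegrell classes, justified by approximating members of $\mathcal{E}^0_m$ by smooth $m$-subharmonic functions vanishing near $\partial\Omega$, available because $\Omega$ is $m$-hyperconvex.

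For (iii): writing $[x_1\cdots x_m]:=\int -h\,dd^cx_1\wedge\dots\wedge dd^cx_m\wedge\beta^{n-m}$, a nonnegative symmetric expression, step (ii) says $[a\,b\,x_3\cdots x_m]^2\le[a\,a\,x_3\cdots x_m]\,[b\,b\,x_3\cdots x_m]$, and iterating this Cauchy--Schwarz inequality in the standard way (repeatedly lowering the number of distinct entries) yields $[u_1\cdots u_m]^m\le\prod_{k=1}^m[u_k\cdots u_k]=\prod_{k=1}^m\int -h\,H_m(u_k)$, which is the assertion; if $\int -h\,H_m(u_k)=0$ for some $k$ the same iteration forces $[u_1\cdots u_m]=0$. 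The main obstacle is step (ii), specifically the point that moving one $dd^c$ onto the $m$-subharmonic weight $h$ and then applying Stokes turns $\int -h\,(dd^c(a-b))^2\wedge T$ into the manifestly nonnegative $\int dw\wedge d^cw\wedge dd^ch\wedge T$; the approximation reductions in (i) and the combinatorial iteration in (iii) are routine.
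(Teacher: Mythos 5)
The paper itself contains no proof of this statement---it is imported verbatim from \cite[Proposition 3.3]{HP}---so there is no internal argument to compare against; your proposal is the standard Cegrell-type proof of such energy estimates (reduction to $\mathcal{E}^0_m$ data and bounded $h$, the two-slot Cauchy--Schwarz inequality obtained by integrating by parts twice so that the quadratic term becomes the manifestly nonnegative $\int dw\wedge d^c w\wedge dd^c h\wedge T$, then the usual descent over multi-indices), and it is sound; this is essentially how the cited reference, following Cegrell's original Monge--Amp\`ere argument, establishes the inequality. The only imprecision worth noting is the phrase ``smooth $m$-subharmonic functions vanishing near $\partial\Omega$'': elements of $\mathcal{E}^0_m$ tend to $0$ \emph{at} the boundary rather than vanish near it, but the integration-by-parts identities you invoke are exactly the standard ones for $\mathcal{E}^0_m$ and the boundary terms do vanish for that reason.
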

\begin{cor}\label{570}
Let $u_1,...,u_m$ $\in$ $\mathcal{F}_m$. Then $$\displaystyle\int dd^{c}u_{1}\wedge...\wedge dd^{c}u_{m}\wedge\beta^{n-m}\leq\displaystyle\prod_{k=1}^{m}\left(\int H_m(u_{k})\right)^{1/m}.$$
\end{cor}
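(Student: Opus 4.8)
The plan is to obtain Corollary \ref{570} as the special case $h\equiv -1$ of Theorem \ref{40}. First I would check that the constant function $-1$ is an admissible test function there, i.e. that $-1\in\mathcal{SH}_m^-(\Omega)$: it is smooth and non-positive, and $dd^c(-1)=0$ trivially belongs pointwise to $\breve{\Gamma}_m$, so $-1$ is $m$-sh in the sense of the definition recalled above. Hence Theorem \ref{40} applies with $h\equiv -1$.

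With this choice one has $-h\equiv 1$, so the left-hand side of the inequality in Theorem \ref{40} reads
$$\int -h\,dd^cu_1\wedge\cdots\wedge dd^cu_m\wedge\beta^{n-m}=\int dd^cu_1\wedge\cdots\wedge dd^cu_m\wedge\beta^{n-m},$$
while each factor on the right becomes $\int -hH_m(u_k)=\int H_m(u_k)$. This is exactly the asserted bound
$$\int dd^cu_1\wedge\cdots\wedge dd^cu_m\wedge\beta^{n-m}\leq\prod_{k=1}^{m}\Bigl(\int H_m(u_k)\Bigr)^{1/m}.$$

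I do not expect any genuine obstacle here; the corollary is a direct specialization. The only point deserving the one line of verification above is that a constant is a legitimate input for $h$ in Theorem \ref{40}. Should one wish to avoid a non-strictly-negative $h$, an alternative is to fix a continuous defining $m$-sh function $\varphi$ of the $m$-hyperconvex domain $\Omega$ (so that $\max(\varphi,-1)\in\mathcal{E}_m^0$), apply Theorem \ref{40} with $h=\varepsilon\max(\varphi,-1)$ for $\varepsilon>0$, and let $\varepsilon\to 0^-$, invoking monotone convergence on both sides; but this refinement is not needed under the hypotheses as stated.
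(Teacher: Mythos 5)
Your proposal is correct and is exactly the intended derivation: the paper states Corollary \ref{570} without proof immediately after Theorem \ref{40}, the implicit argument being precisely the specialization $h\equiv-1$ (a constant negative function is trivially in $\mathcal{SH}_m^-(\Omega)$), so that $-h\equiv 1$ on both sides. Your extra remark about approximating by $\varepsilon\max(\varphi,-1)$ is unnecessary but harmless.
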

\begin{lem}\label{28}
\begin{description}
  \item[(1)] Let $u, u_k, v \in \mathcal{E}_m,$ $k=1,...,m-1,$ with $u\geq v$ on $\Omega$ and set $T=dd^cu_1\wedge...dd^cu_{m-1}\wedge\beta^{n-m}.$ Then
$$\chi_{\{u=-\infty\}}dd^cu\wedge T\leq \chi_{\{v=-\infty\}}dd^cv\wedge T.$$
In particular, if $u$, $v \in \mathcal{E}_m$ are such that  $u\geq v,$ then we have that
$$\int_{A} H_m(u)\leq\int_{A} H_m(u),\; \hbox{for\; every}\; \hbox{m-polar}\; set\; A\subset\Omega.$$
\item[(2)] Let $\mu$ be a positive measure which vanishes on all $m$-polar subsets of $\Omega$. Suppose that
$u, v \in\mathcal{E}^0_m$ such that $H_m(u)\geq\mu$ and $H_m(v)\geq\mu$. Then $H_m(\max(u,v))\geq\mu.$
\item[(3)] Suppose $u_1,u_2,...,u_m$ $\in$ $\mathcal{E}_m$. Then for every $m$-polar $A\subset \Omega$ we have
$$\int_{A} dd^{c}u_{1}\wedge...\wedge dd^{c}u_{m}\wedge\beta^{n-m}\leq\left(\int_{A} H_m(u_{1})\right)^{1/m}...\left(\int_{A} H_m(u_{m})\right)^{1/m}.$$
\end{description}
\end{lem}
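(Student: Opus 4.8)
The three assertions are all of the form ``control of a (mixed) Hessian current on a set where a function equals $-\infty$'', and (1) is the engine from which (2) and (3) are deduced, using in addition the Hölder-type estimates in Theorem~\ref{40}/Corollary~\ref{570}. The plan is as follows.

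\emph{Part (1).} The asserted inequality of measures is local, so I would first reduce to a ball $B\Subset\Omega$ on which $u,v,u_1,\dots,u_{m-1}$ are negative (subtracting constants does not change the currents). For $j\in\N$ put $u^j=\max(u,-j)$, $v^j=\max(v,-j)$ and $u_i^j=\max(u_i,-j)$; these are locally bounded, $u^j\geq v^j$, $u^j\downarrow u$, $v^j\downarrow v$, and by letting $j\to\infty$ in the $u_i$'s first (using the weak convergence in Proposition~\ref{29}) I may work with $T^j=dd^cu_1^j\wedge\dots\wedge dd^cu_{m-1}^j\wedge\beta^{n-m}$ and only at the end replace $T^j$ by $T$. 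Since $u^j=u$ on the open set $\{u>-j\}$ and $dd^cu^j\wedge T^j$ vanishes on $\{u<-j\}$, one gets a decomposition $dd^cu^j\wedge T^j=\chi_{\{u>-j\}}\,dd^cu\wedge T^j+\mu_j$ with $\mu_j\geq 0$ carried by $\{u=-j\}$; combining $dd^cu^j\wedge T^j\rightharpoonup dd^cu\wedge T^j$ with $\chi_{\{u>-j\}}dd^cu\wedge T^j\uparrow\chi_{\{u>-\infty\}}dd^cu\wedge T^j$ identifies the weak limit of $\mu_j$ as $\chi_{\{u=-\infty\}}\,dd^cu\wedge T^j$, and similarly $\nu_j\rightharpoonup\chi_{\{v=-\infty\}}\,dd^cv\wedge T^j$ for $v$. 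The hard part is the comparison of the two limits: at a fixed level $j$ the measures $\mu_j$ and $\nu_j$ live on the unrelated sets $\{u=-j\}$ and $\{v=-j\}$ and are \emph{not} comparable, so the inequality must be extracted in the limit through a comparison-principle estimate for the bounded functions $u^j,v^j$ on $B$ (both equal to $-j$ on $\{u=-j\}\subseteq\{v\leq-j\}$), exactly as in the plurisubharmonic prototype. The ``in particular'' statement then follows by applying the main inequality $m$ times, replacing one factor $dd^cu$ by $dd^cv$ at a time, together with the standard fact that for $w\in\mathcal{E}_m$ the measure $\chi_{\{w>-\infty\}}H_m(w)$ puts no mass on $m$-polar sets; integrating the resulting inequality of measures over the $m$-polar set $A$ gives $\int_AH_m(u)\leq\int_AH_m(v)$.

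\emph{Part (2).} This is a gluing argument. I would split $\Omega=\{u>v\}\cup\{v>u\}\cup\{u=v\}$. On the two open pieces the locality of the Hessian operator on open sets gives $H_m(\max(u,v))=H_m(u)\geq\mu$ on $\{u>v\}$ and $H_m(\max(u,v))=H_m(v)\geq\mu$ on $\{v>u\}$. On the coincidence set $\{u=v\}$ one uses that $\max(u,v)$ agrees there with both $u$ and $v$, together with $H_m(u)\geq\mu$, $H_m(v)\geq\mu$ and the hypothesis that $\mu$ charges no $m$-polar set, to get $\chi_{\{u=v\}}H_m(\max(u,v))\geq\chi_{\{u=v\}}\mu$; I expect this coincidence set (handled exactly as in the bounded plurisubharmonic arguments of Blocki/Cegrell) to be the only delicate point. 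Adding the three contributions yields $H_m(\max(u,v))\geq\mu$.

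\emph{Part (3).} First I would discard the slots that do not see $A$: since $dd^cu_1\wedge\dots\wedge dd^cu_m\wedge\beta^{n-m}$ puts no mass on an $m$-polar set lying in some $\{u_k>-\infty\}$, the left-hand integral is unchanged, and the right-hand side only decreases, if $A$ is replaced by $A\cap\bigcap_k\{u_k=-\infty\}$; so I may assume $A\subseteq\bigcap_k\{u_k=-\infty\}$. After a localization I replace each $u_k$ by $\max(u_k,jg)\in\mathcal{F}_m$, where $g\in\mathcal{F}_m$, $g<0$, $\{g=-\infty\}\supseteq A$, controlling the passage via part (1), to bring the $u_k$ into $\mathcal{F}_m$. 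Then, applying Theorem~\ref{40} with the weight $h=\max(g,-j)\in\mathcal{SH}^-_m$,
\[
\int(-h)\,dd^cu_1\wedge\dots\wedge dd^cu_m\wedge\beta^{n-m}\leq\prod_{k=1}^m\Big(\int(-h)\,H_m(u_k)\Big)^{1/m},
\]
dividing by $j$ and letting $j\to\infty$ (so that $\tfrac1j(-h)\to\chi_{\{g=-\infty\}}$ boundedly) yields $\int_{\{g=-\infty\}}dd^cu_1\wedge\dots\wedge dd^cu_m\wedge\beta^{n-m}\leq\prod_k\big(\int_{\{g=-\infty\}}H_m(u_k)\big)^{1/m}$; combined with $A\subseteq\{g=-\infty\}$ and an outer-regularity argument squeezing $\{g=-\infty\}$ down towards $A$, this gives the claim. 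As in (1), the main obstacle is the bookkeeping of the singular masses on $A$ — here, descending from $\{g=-\infty\}$ to an arbitrary $m$-polar $A$ while keeping the product structure.
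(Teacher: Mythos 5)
For context: the paper does not actually prove the three main inequalities of this lemma --- it quotes them directly from \cite{HP} (Proposition 5.2 and Lemma 5.6 there) and only writes out the two-line deduction of the ``in particular'' clause of (1). So your plan to prove everything from scratch is a genuinely different and far more ambitious route, and as written it stalls exactly at the points where the real work lies.

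Concretely: in Part (1) you set up the truncations correctly, but then concede that the comparison of the limits of $\mu_j$ and $\nu_j$ ``must be extracted in the limit through a comparison-principle estimate \dots exactly as in the plurisubharmonic prototype.'' That comparison \emph{is} the content of the lemma; nothing in your sketch produces it. The standard argument runs through an inequality of the type $dd^c\max(u,-j)\wedge T\geq\chi_{\{u>-j\}}\,dd^cu\wedge T$ together with the inclusion $\{u\leq-j\}\subseteq\{v\leq-j\}$ and a mass comparison on these sublevel sets, none of which appears in your text. In Part (2) the coincidence set $\{u=v\}$ is precisely where the hypothesis that $\mu$ charges no $m$-polar set must be used (e.g.\ by passing to $\max(u,v+\varepsilon)$ for $\varepsilon$ outside a countable exceptional set and letting $\varepsilon\downarrow0$), and you only say you ``expect'' this to work. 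In Part (3) two steps are problematic: replacing $u_k$ by the \emph{larger} functions $\max(u_k,jg)$ and controlling the passage ``via part (1)'' gives the singular mass of the mixed current for the approximants \emph{below} that for the $u_k$'s, i.e.\ an inequality in the wrong direction for transferring the bound back to the original functions (what is actually needed is a monotone convergence statement for the singular parts along decreasing sequences); and the final ``outer-regularity argument squeezing $\{g=-\infty\}$ down towards $A$'' is unsupported, since sets of the form $\{g=-\infty\}$ containing $A$ need not have singular masses close to those of $A$. On the positive side, your deduction of the ``in particular'' clause of (1) by iterating the mixed inequality one factor at a time is sound, and in fact more careful than the paper's one-line version, which applies the displayed inequality as if it directly compared $H_m(u)$ with $H_m(v)$.
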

\begin{proof}
For the first inequality in $\textbf{(1)}$, $\textbf{(2)}$ and $\textbf{(3)}$ See \cite[Proposition 5.2 $\&$ Lemma 5.6 ]{HP}. For the second inequality in $\textbf{(1)}$, it follows from the first one in $\textbf{(1)}$ that we have
$$
\int_{A} H_m(u)= \int_{A\cap\{u=-\infty\}} H_m(u)= \int_{A}\chi_{\{u=-\infty\}} H_m(u)\leq \int_{A}\chi_{\{v=-\infty\}} H_m(v)=\int_{A} H_m(v).
$$
\end{proof}
\begin{lem}\label{34}
Assume that $v\in\mathcal{E}^0_m$ and $\omega \in \mathcal{E}_m$ such that $H_m(v)=H_m(\omega).$ Then $v\geq \omega.$
\end{lem}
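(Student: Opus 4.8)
The plan is to exploit the fact that $v$, having zero boundary values, is the largest negative $m$-sh function carrying the Hessian measure $H_m(v)$; I will do this by a truncation–perturbation argument of the kind used for domination principles. Since $\omega\in\mathcal{E}_m\subset\mathcal{SH}^{-}_m(\Omega)$ we have $\omega\le 0$. First I would replace $\omega$ by $\tilde\omega:=\max(\omega,v)$. Then $v\le\tilde\omega\le 0$, the function $\tilde\omega$ is bounded, $\tilde\omega\to 0$ on $\partial\Omega$ (because $\tilde\omega\ge v$), and $\tilde\omega\in\mathcal{E}^0_m$ since $v\in\mathcal{E}^0_m$ and $\mathcal{E}^0_m$ is stable under taking the maximum with a negative $m$-sh function. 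As $\tilde\omega\ge\omega$, it suffices to prove $v\ge\tilde\omega$, i.e. $v=\tilde\omega$.

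Next I would localise the hypothesis. On $\{v<\tilde\omega\}$ we have $\tilde\omega=\max(\omega,v)>v$, hence $\omega>v$ and $\tilde\omega=\omega$ there; so the standard identity $\mathbf 1_{\{\omega>v\}}H_m(\max(\omega,v))=\mathbf 1_{\{\omega>v\}}H_m(\omega)$, combined with $H_m(\omega)=H_m(v)$, gives
\[
\mathbf 1_{\{v<\tilde\omega\}}H_m(\tilde\omega)=\mathbf 1_{\{v<\tilde\omega\}}H_m(v).
\]
Fix now $\varepsilon>0$ and a function $\psi\in\mathcal{E}^0_m$ with $H_m(\psi)\ge c\,\beta^{n}$ for some $c>0$ (such a $\psi$ exists because $\Omega$ is $m$-hyperconvex), and put $W:=\tilde\omega+\varepsilon\psi\in\mathcal{E}^0_m$. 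Since $\psi\le 0$ we have $\{v<W\}\subset\{v<\tilde\omega\}$, so on $\{v<W\}$ the displayed identity together with the superadditivity $H_m(\tilde\omega+\varepsilon\psi)\ge H_m(\tilde\omega)+\varepsilon^{m}H_m(\psi)$ yields $H_m(W)\ge H_m(v)+\varepsilon^{m}c\,\beta^{n}$. On the other hand, the comparison principle for $\mathcal{E}^0_m$ applied to $v$ and $W$ gives $\int_{\{v<W\}}H_m(W)\le\int_{\{v<W\}}H_m(v)$. Since $\int_\Omega H_m(v)<+\infty$, combining the two inequalities forces $\varepsilon^{m}c\int_{\{v<W\}}\beta^{n}\le 0$, so $\{v<W\}$ has Lebesgue measure zero.

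Finally, since $v$ and $W$ are $m$-sh, hence subharmonic, a nonempty $\{v<W\}$ is impossible: for $z_0$ in it, using $v\ge W$ Lebesgue-a.e. and letting the radius $r\downarrow0$ in the solid-ball averages of $v$ and of $W$ over $B(z_0,r)$ would give $v(z_0)\ge W(z_0)$, a contradiction. Hence $v\ge\tilde\omega+\varepsilon\psi$ on $\Omega$; letting $\varepsilon\downarrow 0$ gives $v\ge\tilde\omega\ge\omega$, as wanted. The step I expect to be the real obstacle is exactly the passage from the comparison inequality to the conclusion: the comparison principle only yields an inequality between Hessian masses on $\{v<W\}$, never the emptiness of that set, so the perturbation by $\varepsilon\psi$ (with $H_m(\psi)$ dominating a multiple of Lebesgue measure) is what converts ``equal masses'' into ``zero volume'', after which the sub-mean-value property of $m$-sh functions upgrades it to ``empty''. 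One must also check carefully that $\tilde\omega\in\mathcal{E}^0_m$ and justify the locality identity above; both are routine given the properties of $m$-sh functions recalled in Section~\ref{7777}, but they rely on finiteness of the relevant Hessian masses.
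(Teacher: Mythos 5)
Your argument is correct in substance, and it shares its first half with the paper's: both proofs replace $\omega$ by $u=\max(v,\omega)$ and use the max-inequality for Hessian measures to compare $H_m(u)$ with $H_m(v)$. Where you diverge is the final step. The paper simply observes $H_m(\max(v,\omega))\geq H_m(v)$ and invokes the pointwise comparison principle for $\mathcal{E}^0_m$ (domination of measures plus equal boundary values implies domination of functions), which immediately gives $\max(v,\omega)\leq v$ and hence $v\geq\omega$ in three lines. You instead re-derive that domination principle from scratch: the $\varepsilon\psi$-perturbation converting the Xing-type mass inequality on $\{v<W\}$ into ``zero Lebesgue volume'', followed by the sub-mean-value upgrade to an everywhere inequality. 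This is precisely the classical proof of the comparison principle, so your route is more self-contained but buys nothing the paper does not already have at its disposal (Corollary \ref{14}, or \cite[Corollary 1.15]{Chi2}). Two points in your write-up deserve more care. First, the existence of $\psi\in\mathcal{E}^0_m$ with $H_m(\psi)\geq c\,\beta^n$ globally is not immediate from $m$-hyperconvexity alone; it is true, but you should either cite the solvability of $H_m(h)=\chi_{\Omega'}\,dV$ (as used in the proof of Theorem \ref{1004}) and run the argument on an exhaustion $\Omega_j\Subset\Omega$, concluding $\mathrm{Vol}(\{v<W\}\cap\Omega_j)=0$ for each $j$, or give a construction. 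Second, you do not need the full plurifine locality identity $\mathbf 1_{\{\omega>v\}}H_m(\max(\omega,v))=\mathbf 1_{\{\omega>v\}}H_m(\omega)$; the one-sided inequality $H_m(\max(\omega,v))\geq\mathbf 1_{\{\omega>v\}}H_m(\omega)+\mathbf 1_{\{\omega\leq v\}}H_m(v)$, which is exactly what the paper uses and is valid for $\omega\in\mathcal{E}_m$ unbounded, already suffices for your estimate on $\{v<W\}$, and it is the safer statement to quote.
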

\begin{proof}
Let $u=\max(v,\omega),$ since $u\in\mathcal{E}^0_m$ and
$$
 H_m(u) = H_m(\max(v,\omega))\geq \chi_{\{v<\omega\}}H_m(\omega) +\chi_{\{v\geq\omega\}}H_m(v)\geq \chi_{\{v<\omega\}}H_m(v)+\chi_{\{v\geq\omega\}}H_m(v)\geq H_m(v).
$$
Then $u\leq v$, so $u=v$, Hence $v\geq \omega.$
\end{proof} 
\begin{defi}
Let $\Omega$ be an open set in $\mathbb{C}^{n}$, a function $u\in\mathcal{SH}_m(\Omega)$ is called $m$-maximal if $v\in\mathcal{SH}_m(\Omega)$, $v\leq u$ outside a compact subset of $\Omega$ implies that $v\leq u$ in $\Omega$
\end{defi}
In \cite{Blo1}, Blocki proved that a $m$-maximal functions $u$ in $\mathcal{E}_{m}$ are precisely the functions with $H_{m}(u)=0.$
Now we come to Characterize the class of $m$-sh function with the boundary values. Let $[\Omega_j]$ be the fundamental increasing sequence of strictly $m$-pseudoconvex subsets of $\Omega$ (that means that for each $j$ there exists a smooth strictly $m$-subharmonic function $\rho$ on some open
neighborhood $\Omega^{\prime}$ of $\Omega_j$ such that $\Omega_j:= \{z\in\Omega^{\prime} / \rho(z) < 0\})$ with $\Omega_j \Subset \Omega_{j+1}$ and $\displaystyle\bigcup^{\infty}_{j=1}\Omega_j=\Omega.$
\begin{defi}\label{120}
Let $u \in \mathcal{SH}^{-}_{m}(\Omega)$ and let $[\Omega_j]$ be a fondamental sequence. Let  $u^j$ be
 the function $$u^j=\sup\left\{\phi \in \mathcal{SH}_{m}(\Omega):\; \phi_{|_{\Omega\setminus\Omega_j}}\leq u\right\}\in\mathcal{SH}_{m}(\Omega),$$
and define $\widetilde{u}:=(\displaystyle\lim_{j\rightarrow+\infty}u^j)^{*},$ called the smallest $m$-maximal $m$-sh majorant of $u$. 
\end{defi}
Definition \ref{120} implies that $u\leq u^{j}\leq u^{j+1}$, therefore  $\displaystyle\lim_{j\rightarrow+\infty}u^j$ exists quasi-everywhere on $\Omega$ (i.e exept in an m-polar set), hence, $\widetilde{u}\in\mathcal{SH}_{m}(\Omega).$ Moreover, if $u\in \mathcal{E}_{m}$ then by [\cite{Chi1}, Theorem 1.7.5.] $\widetilde{u}\in\mathcal{E}_{m}$ and by \cite{Blo1} it is $m$-maximal on $\Omega.$ Let $u, v \in\mathcal{E}_{m}$ and $\alpha\in \mathbb{R}$, $\alpha\geq0,$ then we have that $\widetilde{u+v}\geq \widetilde{u}+\widetilde{v}$, $\widetilde{\alpha u}=\alpha\widetilde{u},$ and if moreover $u\leq v$ then $\widetilde{u}\leq\widetilde{v}.$ It follows from \cite{Blo1} that $\mathcal{E}_{m}\cap \mathcal{MSH}_{m}(\Omega)=\{u\in\mathcal{E}_{m}:\;\widetilde{u}=u\},$ where $\mathcal{MSH}_{m}(\Omega)$ is the family of $m$-maximal functions in $\mathcal{SH}_{m}(\Omega)$.
\par Set $\mathcal{N}_m:=\{u \in \mathcal{E}_{m}:\; \widetilde{u}=0\}.$
Then we have that $\mathcal{N}_m$ is a convex cone and that it is precisely the set of functions in $\mathcal{E}_{m}$ with smallest $m$-maximal $m$-sh majorant identically zero. Note also that 
$$\mathcal{E}^{0}_m\subset \mathcal{F}_m\subset \mathcal{N}_m\subset \mathcal{E}_m.$$
\begin{defi}
Let $\mathcal{K}_m \in \{\mathcal{E}_{m}^{0}, \mathcal{F}_m,\mathcal{F}_m^{p}, \mathcal{N}_m\}$ and $H \in \mathcal{E}_m$. We say that a $m$-sh function $u$ defined on $\Omega$ belongs to $\mathcal{K}_m(\Omega,H)$ (shortly $\mathcal{K}_m(H)$) if there exists $\varphi \in \mathcal{K}_m$ such that
                                                    $H\geq u\geq \varphi+H.$
\end{defi}
Note that if $H=0,$ then $\mathcal{K}_m(H)=\mathcal{K}_m.$
Let $H \in\mathcal{E}_m,$ we define
  $$\mathcal{N}^a_m:=\{u \in \mathcal{N}_{m}:\; H_{m}(u)(P)=0,\; \forall P \;m\hbox{-polar\; set}\},$$
$$\hbox{and}\;\;\mathcal{N}^a_m(H):=\left\{u \in \mathcal{E}_m\; /\; \exists \varphi \in \mathcal{N}^a_m \; \hbox{such that} \;\;H\geq u\geq \varphi+H\right\}.$$
 The following approximation proposition is a consequence of \cite[Theorem 3.1]{Chi2}.
\begin{pro}\label{0}
Assume that  $H \in \mathcal{E}_m$ and $u\in \mathcal{SH}_m(\Omega)$ such that $u\leq H.$ Then there exists a decreasing sequence $[u_j] \subset \mathcal{E}_m^{0}(H)$ that converges pointwise to $u$ on $\Omega$, as $j\rightarrow+\infty$. Moreover, if $H \in \mathcal{SH}_m(\Omega)\cap C(\overline{\Omega}),$ then $[u_j]$ can be chosen such that $u_j\in\mathcal{E}_m^{0}(H)\cap C(\overline{\Omega}).$
\end{pro}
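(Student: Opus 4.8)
The plan is to deduce the proposition from the approximation theorem \cite[Theorem 3.1]{Chi2} by truncating, with the $\max$ operation, against a fixed exhaustion function of $\Omega$. First I would fix once and for all a function $\varphi_0\in\mathcal{E}_m^0$ with $\varphi_0<0$ on $\Omega$ --- such a function exists because $\Omega$ is bounded and $m$-hyperconvex, and by \cite[Theorem 3.1]{Chi2} it can even be taken in $\mathcal{E}_m^0\cap C(\overline{\Omega})$; note that $\varphi_0<0$ at every interior point by the maximum principle for subharmonic functions, and that $j\varphi_0\in\mathcal{E}_m^0$ for every $j\geq1$ since $\int_\Omega H_m(j\varphi_0)=j^m\int_\Omega H_m(\varphi_0)<+\infty$. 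For the first assertion I would simply set $u_j:=\max(u,H+j\varphi_0)$: this is $m$-sh, it satisfies $u_j\leq H$ (because $\varphi_0\leq0$ and $u\leq H$) and $u_j\geq H+j\varphi_0$ with $j\varphi_0\in\mathcal{E}_m^0$, hence $u_j\in\mathcal{E}_m^0(H)$; the sequence $(H+j\varphi_0)_j$ decreases, so $(u_j)_j$ decreases; and it converges pointwise to $u$, since at each $z\in\Omega$ one has $\varphi_0(z)<0$, so $H(z)+j\varphi_0(z)\to-\infty$ and therefore $u_j(z)=u(z)$ for $j$ large when $u(z)>-\infty$, while $u_j(z)\to-\infty=u(z)$ when $u(z)=-\infty$.

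For the ``moreover'' part, assume $H\in\mathcal{SH}_m(\Omega)\cap C(\overline{\Omega})$ and take $\varphi_0\in\mathcal{E}_m^0\cap C(\overline{\Omega})$ as above. By \cite[Theorem 3.1]{Chi2} there is a decreasing sequence $(w_j)\subset\mathcal{SH}_m(\Omega)\cap C(\overline{\Omega})$ with $w_j\downarrow u$ on $\Omega$. The functions $\min(w_j,H)$ are then continuous on $\overline{\Omega}$ and decrease to $\min(u,H)=u$, and I would replace each $w_j$ by its largest $m$-sh minorant
$$P_j:=\big(\sup\{v\in\mathcal{SH}_m(\Omega):\ v\leq\min(w_j,H)\}\big)^{*}.$$
Since $u$ lies in the defining family, $P_j\in\mathcal{SH}_m(\Omega)$, one has $u\leq P_j\leq\min(w_j,H)\leq H$, the sequence $(P_j)_j$ decreases, and $P_j\downarrow u$ because $\min(w_j,H)\downarrow u$. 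Setting finally $u_j:=\max(P_j,H+j\varphi_0)$ I would get, exactly as in the first part, that $u_j$ is $m$-sh, that $u_j\in\mathcal{E}_m^0(H)$, and that $(u_j)_j$ is decreasing and converges pointwise to $u$; moreover $u_j$ is now continuous on $\overline{\Omega}$ as the maximum of the two continuous functions $P_j$ and $H+j\varphi_0$, so $u_j\in\mathcal{E}_m^0(H)\cap C(\overline{\Omega})$.

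The part I expect to require genuine work, rather than bookkeeping, is the continuity up to the boundary of the envelope $P_j$ --- equivalently, the fact that one can produce continuous decreasing approximants of a given $m$-sh function that also stay below a prescribed continuous $m$-sh majorant $H$. This is the classical Bedford--Taylor envelope argument adapted to the $m$-hyperconvex setting, and is precisely the content one extracts from \cite[Theorem 3.1]{Chi2}; once it is granted, all the remaining verifications reduce to the order relations and to the elementary stability properties of $\mathcal{E}_m^0$ recalled at the start.
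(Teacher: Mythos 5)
Your treatment of the first assertion is correct and is essentially the paper's argument in a slightly different dress: the paper takes $\psi_j\in\mathcal{E}_m^0\cap C(\overline{\Omega})$ decreasing to $u$ and sets $v_j=\max(u,\psi_j+H)$, whereas you take $\max(u,H+j\varphi_0)$ with a single fixed negative $\varphi_0\in\mathcal{E}_m^0$; both yield decreasing sequences in $\mathcal{E}_m^0(H)$ converging pointwise to $u$, and your appeal to the maximum principle to get $\varphi_0<0$ throughout $\Omega$ is legitimate since $\Omega$ is a domain.

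The gap is in the ``moreover'' part, and it sits exactly where you flag it: the continuity on $\overline{\Omega}$ of the envelope $P_j=\bigl(\sup\{v\in\mathcal{SH}_m(\Omega):v\leq\min(w_j,H)\}\bigr)^{*}$. This is \emph{not} ``precisely the content one extracts from \cite[Theorem 3.1]{Chi2}'': that theorem produces, for a given negative $m$-sh function, a decreasing sequence of continuous functions in $\mathcal{E}_m^0$ converging to it; it says nothing about the regularity of the largest $m$-sh minorant of a continuous obstacle. Interior and boundary continuity of such envelopes is a genuine theorem in its own right (the $m$-Hessian analogue of the Walsh/Bedford--Taylor obstacle-problem regularity), whose proof in a bounded $m$-hyperconvex domain requires solving the homogeneous Hessian Dirichlet problem on small balls, or a comparable balayage argument; without it, your $u_j=\max(P_j,H+j\varphi_0)$ is only known to be upper semicontinuous. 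The paper's proof is designed precisely to avoid envelopes: it smooths $u$ only on the relatively compact pieces $\Omega_j$ of a fundamental sequence chosen so that $\varphi\geq-\tfrac{1}{2j^2}$ off $\Omega_j$, glues $\max\{v_j-\tfrac1j,\,j\varphi+H\}$ on $\Omega_j$ with $j\varphi+H$ on $\Omega\setminus\Omega_j$ (the shifts by $\tfrac1j$ and the normalization of $\varphi$ force the two pieces to agree near $\partial\Omega_j$, which is what gives continuity on $\overline{\Omega}$ and $m$-subharmonicity across the seam), and then restores monotonicity by passing to $u_j=\sup_{k\geq j}u_k'$, whose $m$-subharmonicity and continuity are then checked by hand. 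So either supply a proof of the continuity of $P_j$ as a separate lemma, or replace the envelope step by an explicit gluing of this kind.
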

\begin{proof}
Let  $u\in \mathcal{SH}_m(\Omega)$ and $H \in \mathcal{E}_m$, by \cite[Theorem 3.1]{Chi2}, there exists a decreasing sequence $[\psi_j]\subset\mathcal{E}_{m}^{0}\cap C(\overline{\Omega})$ that converges pointwise to $u$ as $j\rightarrow+\infty$. Put $v_j=\max(u,\psi_j+H)$, so $v_j \in \mathcal{E}_{m}^{0}(H)$ and decreases pointwise to
$u$ as $j$ tends to $+\infty$, so the first statement is completed.
\par Now let $H \in \mathcal{SH}_m(\Omega)\cap C(\overline{\Omega})$ and $\varphi \in \mathcal{E}_m^0\cap C(\overline{\Omega}).$
We choose the fondamental sequence $[\Omega_j]$ of $\Omega$ such that for each $j \in \mathbb{N}$ we have $\varphi\geq -\frac{1}{2j^2}$ on $\Omega\setminus\Omega_j.$ Let $[v_j]\subset\mathcal{SH}_{m}(\Omega_j)\cap C^\infty(\Omega),$ be a decreasing sequence that converges pointwise to $u$ as $j$ tends to $+\infty$ and $v_j\leq H +\frac{1}{2j}$ on $\Omega_{j+1}.$ Set
       $$u^{'}_{j}=\left\{
     \begin{array}{ll}
       \max\{v_{j}-\frac{1}{j},j\varphi+H\} \ \ $on$ \ \  \Omega_{j}, \\
        j\varphi+H \ \ \ \ \ \ \ \ \ \ \ \ \ \ \ \ \  \ \ \ $on$\ \  \Omega\setminus\Omega_{j}.
     \end{array}
   \right.$$
Then $[u_{j}^{'}] \subset \mathcal{E}_m^{0}(H)\cap C(\overline{\Omega})$ and converges pointwise to $u$ as $j$ tends to $+\infty$. Set $u_j= \displaystyle\sup_{j\leq k}u_{k}^{'}.$ $[u_{j}^{'}]$ satisfies:
$$u_{j}^{'}+\frac{1}{j}\geq(v_{j+1}-\frac{1}{j+1})+\frac{1}{j+1}\;\; \hbox{and}\;\;
u_{j}^{'}+\frac{1}{j}\geq (j+1)\varphi+H+\frac{1}{j+1}.$$
  $$\hbox{Hence} \;\;\;\; u_{j}^{'}+\frac{1}{j} \geq \max\left\{v_{j+1}-\frac{1}{j+1},(j+1)\varphi+H\right\}= u_{j+1}^{'}+\frac{1}{j+1}.\;\;\;\;\;\;\;\;\;\;\;\;\;\;\;\;\;\;\;\;\;\;\;\;\;\;\;\;$$
Then, for each $j \in \mathbb{N}$ fixed,
  $\omega_{m}:=\left[\max(u_{j}^{'}, u_{j+1}^{'},\cdots, u_{m-1}^{'},u_{m}^{'}+\frac{1}{m})\right]^{\infty}_{m=j}$ decreases on $\Omega$ to $u_j$, as $m\rightarrow +\infty$ and since $\omega_m \in \mathcal{SH}_m(\Omega),$ then $u_j \in \mathcal{SH}_m(\Omega).$ Hence $u_j$ is upper semicontinuous, on the other hand, since $u_{k}^{'} \in  C(\overline{\Omega})$, then  $u_{k}^{'}$ is an lower semicontinuous, so is $u_j=\displaystyle\sup_{j\leq k}u_{k}^{'}$, this implies that $u_j$ is continuous on $\overline{\Omega}$. Moreover, $[u_j]$ is decreasing and converges pointwise to $u$ as $j\rightarrow+\infty.$ And the proof is completed.
\end{proof}
\subsection{Convergence in $m$-capacity}
Let $E \subset \Omega$ be a Borel subset. The $C_m$-capacity the $\tilde{C}_m$-capacity of $E$ with respect
to $\Omega$ are defined by
$$C_{m}(E)=C_{m}(E,\Omega)=\sup\left\{\int_{E}H_m(\theta)\;,\;\theta\in \mathcal{SH}_{m}(\Omega), -1\leq \theta\leq 0 \right\}.$$
$$\tilde{C}_{m}(E)=\tilde{C}_{m}(E,\Omega)=\sup\left\{\int_{E}H_{m-1}(\theta)\;,\;\theta\in \mathcal{SH}_{m}(\Omega), -1\leq \theta\leq 0 \right\}.$$
\par Let $[u_s]\subset \mathbb{N}$ be real-valued borel measurable function defined on $\Omega$,
$[u_s]$ is said to converges to $u$ in $\tilde{C}_{m}$-capacity, as $s\rightarrow+\infty$ if for every compact subset $K$ of $\Omega$ and every $\varepsilon\geq  0$ it holds that
$$\displaystyle\lim_{s\rightarrow+\infty}\tilde{C}_{m}(\{z\in K: |u_s(z)-u(z)|>\varepsilon\})=0.$$
\begin{thm}\cite[Theorem 3.10]{HP}.
Let $u_s,v_s,w\in \mathcal{E}_m$ be such that $u_s,v_s\geq w$ for all $s\geq 1$. Assume that $\vert u_s-v_s\vert\rightarrow 0$ in $\tilde{C}_{m}$-capacity. Then the sequence of measures $H_m(u_s)-H_m(v_s)\rightarrow 0$, weakly, as $s\rightarrow+\infty.$
\end{thm}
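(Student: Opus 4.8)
The plan is to test the difference of measures against an arbitrary $\phi\in C_0^\infty(\Omega)$, which suffices for weak convergence. Fix such a $\phi$, put $K=\operatorname{supp}\phi$ and choose $K\Subset K'\Subset\Omega$. The backbone is the telescoping identity
$$H_m(u_s)-H_m(v_s)=dd^c(u_s-v_s)\wedge\Big(\sum_{k=0}^{m-1}(dd^cu_s)^k\wedge(dd^cv_s)^{m-1-k}\Big)\wedge\beta^{n-m},$$
together with integration by parts. Setting $T_{s,k}:=(dd^cu_s)^k\wedge(dd^cv_s)^{m-1-k}\wedge\beta^{n-m}$ and transferring $dd^c$ from $u_s-v_s$ onto $\phi$ (two integrations by parts, no boundary term), one gets
$$\int_\Omega\phi\,\big(H_m(u_s)-H_m(v_s)\big)=\sum_{k=0}^{m-1}\int_\Omega(u_s-v_s)\,dd^c\phi\wedge T_{s,k}.$$
This should be established first for the bounded truncations $\max(u_s,-N)$, $\max(v_s,-N)$ --- where the wedge products are classical Bedford--Taylor objects --- and then recovered as $N\to+\infty$, using the convergence of the corresponding mixed Hessian currents along the decreasing limits and the fact that $w\le u_s-v_s\le-w$ keeps $u_s-v_s$ locally integrable.

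Since $|dd^c\phi|\le C_\phi\,\beta$ pointwise and each $\beta\wedge T_{s,k}$ is a nonnegative measure whose mass over $K$ is, by the Chern--Levine--Nirenberg inequality together with $w\le u_s,v_s\le0$, bounded by a constant $C=C(K,K',w)$ independent of $s$, it is enough to show that $\int_\Omega|u_s-v_s|\,\beta\wedge T_{s,k}\to0$ for each $k$. Fixing $\varepsilon>0$ and splitting $K$ into $\{|u_s-v_s|\le\varepsilon\}$ --- on which this integral is at most $\varepsilon\,C$ --- and $E_s:=\{z\in K:\ |u_s(z)-v_s(z)|>\varepsilon\}$, which by hypothesis satisfies $\tilde C_m(E_s)\to0$, the task reduces to bounding the contribution of $E_s$.

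The difficulty on $E_s$ is that $|u_s-v_s|\le-w$ is unbounded and that $\beta\wedge T_{s,k}$ can concentrate on the $m$-polar set $P:=\{w=-\infty\}$; this is the heart of the argument. I would truncate at a large level $R>0$, bounding $|u_s-v_s|\le R+(-w-R)^+$, so that it suffices to control $R\int_{E_s}\beta\wedge T_{s,k}$ and $\int_K(-w-R)^+\,\beta\wedge T_{s,k}$. For the first term, on the plurifine-open set $\{w>-R\}$ the current $\beta\wedge T_{s,k}$ coincides (by locality of the Hessian operator) with the bounded current built from $\max(u_s,-R),\max(v_s,-R)$, whose $[-1,0]$-rescaled factors are admissible competitors for $\tilde C_m$; hence, by the mixed Maclaurin/Garding inequality (in the spirit of Corollary \ref{570} and Lemma \ref{28}(3)), its mass on $E_s$ is $\le R^{m-1}\tilde C_m(E_s)$, so this contribution is $O\big(R^{m}\tilde C_m(E_s)\big)\to0$ as $s\to+\infty$ for fixed $R$, while the part of $E_s$ lying in $\{w<-R\}$ is absorbed into the second term. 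For the second term --- the genuinely delicate one --- one uses that $u_s,v_s\ge w$ forces $\{u_s=-\infty\}\cup\{v_s=-\infty\}\subset P$, and then, via Lemma \ref{28}(1) iterated and a Chern--Levine--Nirenberg-type estimate relative to $w$, dominates the relevant tails of all the $\beta\wedge T_{s,k}$ by currents depending on $w$ alone; because $w\in\mathcal{E}_m$, these tails become small once $R$ is large, uniformly in $s$. Letting $s\to+\infty$, then $\varepsilon\to0$, then $R\to+\infty$ gives $\int_\Omega\phi\,(H_m(u_s)-H_m(v_s))\to0$.

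The main obstacle is precisely this last step: controlling, uniformly in $s$, mixed Hessian currents of the possibly unbounded $u_s,v_s$ and their concentration on the polar set $\{w=-\infty\}$. This is exactly where the hypothesis of a common minorant $w\in\mathcal{E}_m$ is indispensable --- it keeps the weight $u_s-v_s$ integrable against $\beta\wedge T_{s,k}$ and, through Lemma \ref{28}, pins the singular parts of these currents under a single $s$-independent majorant built from $w$. For uniformly bounded sequences the whole scheme collapses, on the region $\{w>-R\}$, to Xing's classical continuity estimate for the complex Hessian operator.
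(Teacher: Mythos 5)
The paper itself offers no proof of this statement (it is quoted verbatim from \cite[Theorem 3.10]{HP}), so I can only judge your argument on its own terms. Your overall architecture --- telescoping $H_m(u_s)-H_m(v_s)$, integrating by parts onto $\phi$, splitting $K$ into $\{|u_s-v_s|\le\varepsilon\}$ and $E_s$, and using that the currents $\beta\wedge T_{s,k}$ carry exactly $m-1$ Hessian factors so that their bounded truncations are dominated by $\tilde C_m$ --- is the standard and correct skeleton, and you correctly locate the heart of the matter in the concentration of $\beta\wedge T_{s,k}$ on the $m$-polar set $\{w=-\infty\}$. The problem is that the argument you sketch for precisely that step does not work.

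The gap is the claim that $\int_{K\cap\{w<-R\}}(-w)\,\beta\wedge T_{s,k}$ is finite and tends to $0$ uniformly in $s$ as $R\to+\infty$ ``because $w\in\mathcal{E}_m$.'' First, membership in $\mathcal{E}_m$ (even in $\mathcal{F}_m$) carries no weighted-energy information: finiteness of integrals of the form $\int(-w)(dd^cw)^{m-1}\wedge\beta^{n-m+1}$ is exactly what distinguishes the energy classes $\mathcal{E}^1_m,\mathcal{F}^1_m$, and no such hypothesis is made here. Second, and fatally, the measures $\beta\wedge T_{s,k}$ can put positive mass on $\{w=-\infty\}$, where your majorant $-w$ is identically $+\infty$: take $n=m=2$, $u_s=\log|z_1|$, $v_s=\max(\log|z_1|,-s)$, $w=2\log|z_1|$ (suitably localized so all lie in $\mathcal{E}_2$); then $|u_s-v_s|\to0$ in $\tilde C_2$-capacity, yet $\beta\wedge T_{s,1}=\beta\wedge dd^c\log|z_1|$ charges $K\cap\{z_1=0\}\subset E_s\cap\{w\le -R\}$ for every $s$ and $R$, so your ``second term'' equals $+\infty$ identically --- indeed in this example even the individual telescoped integrals $\int(u_s-v_s)\,dd^c\phi\wedge T_{s,k}$ are of the form $\infty-\infty$, although the conclusion of the theorem is trivially true there. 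The pointwise bound $|u_s-v_s|\le-w$ is therefore useless exactly where the reference measures concentrate. What is missing is a cancellation argument for the singular parts: one must compare $\chi_{\{u_s=-\infty\}}H_m(u_s)$ and $\chi_{\{v_s=-\infty\}}H_m(v_s)$ (and the corresponding polar contributions of the mixed currents) directly, e.g.\ by passing through $\max(u_s,v_s)$ and invoking inequalities of the type of Lemma \ref{28}(1), reserving your capacity--truncation scheme for the parts of the measures living on $\{w>-\infty\}$. As written, the proof establishes the result only under an additional finite-weighted-energy hypothesis on $w$ (or for uniformly bounded sequences), not in the stated generality.
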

\begin{cor}\label{00}
Let $u_0\in \mathcal{E}_m$ and $[u_s]\subset \mathcal{E}_m$ be such that $u_0\leq u_s$ for all $s\in \mathbb{N}$. If $u_s$ converes to a $m$-subharmonic function $u$ in $\tilde{C}_{m}$-capacity, then the sequence of measures $H_m(u_s)$ $\rightarrow$ $H_m(u)$ weakly, as $s\rightarrow+\infty.$
\end{cor}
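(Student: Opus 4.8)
The plan is to deduce this corollary from the immediately preceding theorem (the cited \cite[Theorem 3.10]{HP}) by running it with the \emph{constant} comparison sequence $v_s\equiv u$ and lower bound $w=u_0$. For that to be legitimate one must first check that the limit function $u$ is itself admissible, i.e.\ that $u\in\mathcal{E}_m$ and $u_0\le u\le 0$ on $\Omega$; this verification is where essentially all the work lies.

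First I would record that $u_0\le u\le 0$. Since $u_s\to u$ in $\tilde C_m$-capacity, a routine extraction gives a subsequence $(u_{s_k})$ converging to $u$ quasi-everywhere on $\Omega$: on a fixed compact exhaustion choose, for each $k$, a Borel set $E_k$ with $\tilde C_m(E_k)<2^{-k}$ outside of which $|u_{s_k}-u|<1/k$, and discard the $m$-polar set $\bigcap_N\bigcup_{k\ge N}E_k$. Because $m$-polar sets are Lebesgue-null and each $u_s$ satisfies $u_0\le u_s\le 0$, we get $u_0\le u\le 0$ Lebesgue-a.e., hence everywhere, since $u$ and $u_0$ are $m$-subharmonic (two $m$-sh functions comparable a.e.\ are comparable pointwise).

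Next I would show $u\in\mathcal{E}_m$ using the monotonicity/stability of the Cegrell class $\mathcal{E}_m$ (the $m$-subharmonic analogue of Cegrell's stability result for $\mathcal{E}$): if $w\in\mathcal{SH}_m^{-}(\Omega)$ and there is $v\in\mathcal{E}_m$ with $v\le w$, then $w\in\mathcal{E}_m$. Applied with $v=u_0$ and $w=u$ this gives $u\in\mathcal{E}_m$. The reason is local: around a point $z_0$ one has a neighborhood $U$ and a decreasing sequence $[u_j]\subset\mathcal{E}_m^0$ with $u_j\downarrow u_0$ on $U$ and $\sup_j\int_\Omega H_m(u_j)<+\infty$; then $w_j:=\max(u_j,u)$ decreases to $u$ on $U$, has zero boundary values squeezed between $u_j$ and $0$, and by the comparison principle $\int_\Omega H_m(w_j)\le\int_\Omega H_m(u_j)$, so the supremum stays finite and the defining property of $\mathcal{E}_m$ is met.

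Finally, apply the preceding theorem with the sequences $u_s$, $v_s\equiv u$ and the common lower bound $w=u_0$: all hypotheses hold, namely $u_s,u,u_0\in\mathcal{E}_m$, $u_s\ge u_0$ and $u\ge u_0$ for all $s$, and $|u_s-v_s|=|u_s-u|\to 0$ in $\tilde C_m$-capacity. The conclusion is $H_m(u_s)-H_m(u)\to 0$ weakly, which is exactly $H_m(u_s)\to H_m(u)$ weakly as $s\to+\infty$. The main obstacle is not the final application but Steps~1--2: ensuring the capacity limit $u$ qualifies as a member of $\mathcal{E}_m$ sitting above $u_0$, which requires the a.e.\ extraction from capacity convergence and the stability property of $\mathcal{E}_m$.
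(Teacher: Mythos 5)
Your proposal is correct and follows exactly the route the paper intends: the corollary is stated as an immediate consequence of the preceding theorem (\cite[Theorem 3.10]{HP}) applied with the constant sequence $v_s\equiv u$ and lower bound $w=u_0$, and your verification that $u\in\mathcal{E}_m$ with $u_0\le u\le 0$ (via q.e.\ extraction from capacity convergence and the stability of $\mathcal{E}_m$ under $m$-sh majorants) supplies the details the paper leaves implicit.
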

\section{The Comparison Principle in $\mathcal{N}_{m}(H)$}\label{77771}
\subsection{Xing-Type Comparison Principle for $\mathcal{E}_{m}$}
In this section we give the comparison principle for functions in $\mathcal{N}_m(H)$. We shall firstly prove Xing-Type inequaliry for $\mathcal{E}_{m}$ following ideas from \cite{N-Ph}.
\begin{pro}\label{4}
\begin{itemize}
\item[$(a)$] Let $u, v \in \mathcal{F}_{m}$ be such that $u\leq v$ on $\Omega$. Then for $1\leq k\leq m$ and all $r\geq 1,$
  $$
  \frac{1}{k!}\int_{\Omega}(v-u)^{k}dd^{c}\omega_{1}\wedge ...\wedge dd^{c}\omega_{m}\wedge \beta^{n-m} + \int_{\Omega}\tilde{\omega}_{1}(dd^{c}v)^{k}\wedge dd^{c}\omega_{k+1}\wedge...\wedge dd^{c}\omega_{m}\wedge \beta^{n-m}$$
  $$ \leq  \int_{\Omega}\tilde{\omega}_{1}(dd^{c}u)^{k}\wedge dd^{c}\omega_{k+1}\wedge...\wedge dd^{c}\omega_{m}\wedge \beta^{n-m}.
  $$
Where $\omega_j \in \mathcal{SH}_m(\Omega),$ $0\leq \omega_j \leq 1,$ $j=1,\cdots, k$, $\omega_{k+1},\cdots,\omega_{m} \in \mathcal{F}_m$, and $\tilde{\omega}_{1}=r-\omega_{1}.$
 \item[$(b)$] Let $u,\; v \in \mathcal{E}_{m}$ such that $u\leq v$ on $\Omega$ and $u=v$ on $\Omega \backslash K$ for some $K \Subset \Omega.$ Then for $1\leq k\leq m$ and all $r\geq 1,$
   $$
  \frac{1}{k!}\int_{\Omega}(v-u)^{k}dd^{c}\omega_{1}\wedge ...\wedge dd^{c}\omega_{m}\wedge \beta^{n-m} + \int_{\Omega}\tilde{\omega}_{1}(dd^{c}v)^{k}\wedge dd^{c}\omega_{k+1}\wedge...\wedge dd^{c}\omega_{m}\wedge \beta^{n-m}$$
  $$ \leq  \int_{\Omega}\tilde{\omega}_{1}(dd^{c}u)^{k}\wedge dd^{c}\omega_{k+1}\wedge...\wedge dd^{c}\omega_{m}\wedge \beta^{n-m}.
  $$
Where $\omega_j \in \mathcal{SH}_m(\Omega),$ $0\leq \omega_j \leq 1,$ $j=1,\cdots, k$, $\omega_{k+1},\cdots,\omega_{m} \in \mathcal{E}_m$ and $\tilde{\omega}_{1}=r-\omega_{1}.$
\end{itemize}
\end{pro}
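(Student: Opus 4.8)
The plan is to prove the inequality first when all the functions in sight are bounded — so that every integration by parts is unproblematic — and then to recover the general statement by approximation; part $(b)$ will fall out of the same scheme, the point being that there $u-v$ is compactly supported, so no boundary term ever appears. Throughout I set $T:=dd^c\omega_{k+1}\wedge\cdots\wedge dd^c\omega_m\wedge\beta^{n-m}$, a closed $m$-positive current, and $w:=v-u\ge 0$.

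\textbf{Step 1: the bounded case.} Assume $u,v\in\mathcal{E}^0_m$ and $\omega_{k+1},\dots,\omega_m\in\mathcal{E}^0_m$, with $\omega_1,\dots,\omega_k$ bounded (after the usual regularization one may take them smooth). Moving $\int_\Omega\tilde\omega_1(dd^cv)^k\wedge T$ to the other side, the inequality is equivalent to
\[
\frac1{k!}\int_\Omega w^{k}\,dd^c\omega_1\wedge\cdots\wedge dd^c\omega_k\wedge T\ \le\ \int_\Omega\tilde\omega_1\big[(dd^cu)^{k}-(dd^cv)^{k}\big]\wedge T .
\]
For the right-hand side I would use the telescoping identity $(dd^cu)^{k}-(dd^cv)^{k}=\sum_{j=0}^{k-1}(dd^cu)^{j}\wedge dd^c(u-v)\wedge(dd^cv)^{k-1-j}$ and, in each summand, move $dd^c$ off $u-v$ onto $\tilde\omega_1$ (no boundary term, since $w=0$ on $\partial\Omega$ and the intermediate currents are closed); this rewrites it as $\sum_{j=0}^{k-1}\int_\Omega w\,dd^c\omega_1\wedge(dd^cu)^{j}\wedge(dd^cv)^{k-1-j}\wedge T$, which dominates its $j=k-1$ term $\int_\Omega w\,dd^c\omega_1\wedge(dd^cu)^{k-1}\wedge T$. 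So it suffices to prove
\[
\frac1{k!}\int_\Omega w^{k}\,dd^c\omega_1\wedge\cdots\wedge dd^c\omega_k\wedge T\ \le\ \int_\Omega w\,dd^c\omega_1\wedge(dd^cu)^{k-1}\wedge T ,
\]
which I would obtain by iterating, for $p=k,k-1,\dots,2$, the ``peeling'' estimate
\[
\frac1{p!}\int_\Omega w^{p}\,dd^c\omega_1\wedge\cdots\wedge dd^c\omega_p\wedge S\ \le\ \frac1{(p-1)!}\int_\Omega w^{p-1}\,dd^c\omega_1\wedge\cdots\wedge dd^c\omega_{p-1}\wedge dd^cu\wedge S ,
\]
valid for every closed $m$-positive current $S$ of the appropriate bidegree. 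To prove the peeling estimate one moves $dd^c$ off $\omega_p$ onto $w^{p}$, expands $dd^c(w^{p})=p\,w^{p-1}dd^cw+p(p-1)\,w^{p-2}\,dw\wedge d^cw$, uses $0\le\omega_p\le1$ (together with positivity of the Dirichlet form to replace $\omega_p$ by $1$ in the gradient term, then the identity $\int w^{p-2}dw\wedge d^cw\wedge S'=-\tfrac1{p-1}\int w^{p-1}dd^cw\wedge S'$), writes $dd^cw=dd^cv-dd^cu$, and discards the $dd^cv$-terms by $dd^cv\wedge S'\ge0$ and $0\le\omega_p\le1$; the two $dd^cv$-contributions cancel, leaving exactly the right-hand side. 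Running this $k-1$ times — starting from $S=T$, $p=k$, and trading one factor $dd^c\omega_j$ for a factor $dd^cu$ at each step — gives the bounded case of $(a)$; and since $w$ is then compactly supported, the same computation proves the bounded case of $(b)$ as well.

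\textbf{Step 2: approximation for $(a)$.} For arbitrary $u\le v$ in $\mathcal{F}_m$ and $\omega_{k+1},\dots,\omega_m\in\mathcal{F}_m$, I would pick, by Cegrell's approximation theorem (\cite[Theorem 3.1]{Chi2}; cf.\ Proposition \ref{0}), decreasing sequences $u_s\downarrow u$ and $v'_s\downarrow v$ in $\mathcal{E}^0_m\cap C(\overline\Omega)$, set $v_s:=\max(v'_s,u_s)\in\mathcal{E}^0_m\cap C(\overline\Omega)$ so that $u_s\le v_s\downarrow v$, put $\omega_j^{s}:=\max(\omega_j,-s)\in\mathcal{E}^0_m$ for $j>k$ and $T_s:=dd^c\omega_{k+1}^s\wedge\cdots\wedge dd^c\omega_m^s\wedge\beta^{n-m}$; then apply Step 1 and let $s\to\infty$. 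On the right-hand side, and in $\int\tilde\omega_1(dd^cv_s)^k\wedge T_s$, Proposition \ref{29} (noting $\omega_1-r\in\mathcal{SH}_m^-(\Omega)$ and that the relevant mixed masses are finite by Corollary \ref{570}) gives convergence to the desired limits. On the first term, $w_s:=v_s-u_s$ is continuous with $0\le w_s\uparrow w$, hence $w_s^{\,k}\uparrow w^{k}$, while $dd^c\omega_1\wedge\cdots\wedge dd^c\omega_k\wedge T_s$ converges weakly; testing against the fixed continuous function $w_{s_0}^{\,k}$ and letting $s_0\to\infty$ by monotone convergence shows $\liminf_{s}\int w_s^{\,k}\,dd^c\omega_1\wedge\cdots\wedge dd^c\omega_k\wedge T_s\ge\int w^{k}\,dd^c\omega_1\wedge\cdots\wedge dd^c\omega_k\wedge T$. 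Combining these limits in Step 1's inequality proves $(a)$.

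\textbf{Step 3: part $(b)$, and the main obstacle.} When $u,v\in\mathcal{E}_m$, $u\le v$, and $u=v$ on $\Omega\setminus K$ with $K\Subset\Omega$, I would take the truncations $u_s:=\max(u,-s)$, $v_s:=\max(v,-s)$ — bounded and $m$-sh, with $u_s=v_s$ off $K$, so $w_s:=v_s-u_s$ is supported in $K$ — and $\omega_j^{s}:=\max(\omega_j,-s)$ for $j>k$; apply the bounded case (legitimate: $w_s$ compactly supported kills all boundary terms) and pass to the limit as in Step 2, now using Corollary \ref{00} and Proposition \ref{29} together with the facts that $w_s\to w$ in $\tilde{C}_m$-capacity on $K$ and $w$ is quasi-continuous. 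The computational heart is the peeling estimate: the delicate point is to arrange the two integrations by parts so that the Dirichlet (gradient) terms $w^{p-2}dw\wedge d^cw\wedge(\,\cdot\,)$ are exactly absorbed and the factorial constants $1/p!\to 1/(p-1)!$ match up. The other subtle point is the limit passage in $(a)$, where $w=v-u$ is genuinely unbounded; this is precisely why the approximants must be taken continuous and decreasing, so that $w_s^{\,k}$ is a continuous function and the lower-semicontinuity argument for $\int w_s^{\,k}\,d\mu_s$ against the weakly convergent measures $\mu_s$ is available.
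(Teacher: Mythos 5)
Your overall route is the one the paper itself takes. The computational core of your Step 1 --- iterate a ``peeling'' inequality of the form $\int w^{p}dd^{c}\omega_{p}\wedge S\le p\int(1-\omega_{p})w^{p-1}dd^{c}u\wedge S$, then telescope $(dd^{c}u)^{k}-(dd^{c}v)^{k}$ and integrate by parts against $\tilde\omega_{1}$ --- is exactly the content of Lemmas \ref{5} and \ref{6}; you merely run the same chain of inequalities from the other end, and your sketch of the peeling estimate (the two $dd^{c}v$-contributions cancelling after the Dirichlet term is integrated back by parts) is sound. Part $(b)$ is where you genuinely diverge: the paper reduces $(b)$ to $(a)$ by taking an $\mathcal{F}_{m}$-majorant $\widetilde v\ge v$ equal to $v$ near $K$ and gluing $u$ into it, whereas you truncate $u,v$ by $\max(\cdot,-s)$. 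That is workable (and here $\max(v,-s)-\max(u,-s)$ really is nondecreasing in $s$), provided you note that your ``bounded case'' must then be proved for functions in $\mathcal{E}_{m}\cap L^{\infty}$ whose difference is compactly supported, not for $\mathcal{E}_{m}^{0}$ functions --- which is precisely the first half of the proof of Lemma \ref{6}.

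The genuine gap is in Step 2, in the limit of the first term. You set $w_{s}:=v_{s}-u_{s}$ with $u_{s}\downarrow u$ and $v_{s}\downarrow v$ and assert $0\le w_{s}\uparrow w$, hence $w_{s}^{k}\uparrow w^{k}$; your fallback (``test against the fixed continuous function $w_{s_{0}}^{k}$'') relies on the same monotonicity. But the difference of two independently decreasing sequences is not monotone: if, say, $u$ is already continuous and one takes $u_{s}\equiv u$, then $w_{s}=v_{s}-u$ \emph{decreases}. So neither the monotone convergence nor the bound $w_{s}\ge w_{s_{0}}$ for $s\ge s_{0}$ is available, and the inequality $\liminf_{s}\int w_{s}^{k}\,d\mu_{s}\ge\int w^{k}\,d\mu$ is unproved as written; your unnecessary truncation of $\omega_{k+1},\dots,\omega_{m}$ makes matters worse, since the measures $\mu_{s}$ then vary with $s$ and the natural pointwise minorants $w_{s}\ge(v-u_{s_{0}})^{+}$ are only upper semicontinuous, the wrong semicontinuity for weak limits. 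The repair is the paper's two-index limit: keep $T$ fixed (Lemma \ref{6} already allows $\omega_{k+1},\dots,\omega_{m}\in\mathcal{E}_{m}$), apply the bounded inequality to $u_{s}$ and $v_{j}:=\max(u_{j},v_{j}')$ with $j\le s$, let $s\to\infty$ first --- then $(v_{j}-u_{s})^{k}$ genuinely increases and monotone convergence applies against a fixed measure --- and only afterwards let $j\to\infty$, where $(v_{j}-u)^{k}\ge(v-u)^{k}$ pointwise, so the inequality passes to the limit trivially while Proposition \ref{29} handles the two Hessian terms.
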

For the proof we need the following lemmas.
\begin{lem}\label{5}
Assume that $u, v \in \mathcal{SH}_m(\Omega)\cap L^{\infty}(\Omega)$ such that $u\leq v$ on $\Omega$ and 
$\displaystyle\lim_{z\rightarrow \partial\Omega}\left[u(z)-v(z)\right]=0,$ then
$$\int_{\Omega}(v-u)^{k}dd^{c}\omega\wedge T\leq k\int_{\Omega}(1-\omega)(v-u)^{k-1}dd^{c}u\wedge T.$$
for all $\omega \in \mathcal{SH}_m(\Omega),$ $0\leq\omega \leq 1$ and all closed $m$-positive current $T.$
\end{lem}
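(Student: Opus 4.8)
The plan is to follow the classical integration-by-parts argument for Xing-type inequalities, adapted to the $m$-subharmonic setting. First I would observe that, since $u,v\in\mathcal{SH}_m(\Omega)\cap L^\infty(\Omega)$ with $u\le v$ and $u-v\to 0$ at the boundary, the function $(v-u)^k$ is a bounded quasi-continuous function vanishing at $\partial\Omega$, so all the integrals below are finite and the boundary terms produced by Stokes' theorem vanish. The key identity is the pointwise expansion
\begin{equation*}
dd^c\big((v-u)^k\big) = k(v-u)^{k-1}dd^c(v-u) + k(k-1)(v-u)^{k-2}d(v-u)\wedge d^c(v-u),
\end{equation*}
valid in the sense of currents for bounded $m$-sh differences (this is the Bedford–Taylor chain rule, which carries over to $\mathcal{SH}_m$ since the relevant wedge products against $\beta^{n-m}$ and against the $m$-positive current $T$ are well defined). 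The last term is a nonnegative current when wedged with $T$.

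Second, I would integrate by parts twice. Writing $w=v-u\ge 0$, we have
\begin{equation*}
\int_\Omega w^k\, dd^c\omega\wedge T = \int_\Omega \omega\, dd^c(w^k)\wedge T = \int_\Omega \omega\Big(k w^{k-1}dd^c w + k(k-1)w^{k-2}dw\wedge d^c w\Big)\wedge T,
\end{equation*}
where the boundary terms vanish because $w^k$ and $\omega$ are bounded and $w^k\to 0$ at $\partial\Omega$, and because $T$ is a fixed closed $m$-positive current (approximate $u,v$ by smooth $m$-sh functions decreasing to them, or use the standard comparison/approximation machinery for $\mathcal{SH}_m\cap L^\infty$, to justify Stokes). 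Now I would add and subtract: replacing $\omega$ by $\omega - 1 + 1$ and using $dd^c(w^k)\wedge T \ge 0$ when integrated against the nonnegative test coefficient (or more precisely that $\int_\Omega dd^c(w^k)\wedge T = 0$ by Stokes, since $w^k$ vanishes at the boundary) to handle the ``$+1$'' part, we get
\begin{equation*}
\int_\Omega w^k\, dd^c\omega\wedge T \le -\int_\Omega (1-\omega)\Big(k w^{k-1}dd^c w + k(k-1)w^{k-2}dw\wedge d^c w\Big)\wedge T.
\end{equation*}
Since $0\le \omega\le 1$ and $(k-1)w^{k-2}dw\wedge d^c w\wedge T\ge 0$, the second term in the parentheses contributes a nonpositive quantity, hence can be dropped to give
\begin{equation*}
\int_\Omega w^k\, dd^c\omega\wedge T \le -k\int_\Omega (1-\omega) w^{k-1}dd^c(v-u)\wedge T = k\int_\Omega (1-\omega) w^{k-1}dd^c u\wedge T - k\int_\Omega (1-\omega)w^{k-1}dd^c v\wedge T.
\end{equation*}

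Finally, since $1-\omega\ge 0$, $w^{k-1}\ge 0$ and $dd^c v\wedge T\ge 0$ (as $v\in\mathcal{SH}_m$ and $T$ is $m$-positive of the right bidegree), the term $-k\int_\Omega(1-\omega)w^{k-1}dd^c v\wedge T$ is nonpositive and may be discarded, yielding exactly
\begin{equation*}
\int_\Omega (v-u)^k\, dd^c\omega\wedge T \le k\int_\Omega (1-\omega)(v-u)^{k-1}dd^c u\wedge T,
\end{equation*}
which is the claim. The main obstacle is the careful justification of the two integrations by parts, i.e.\ that the boundary contributions vanish: one has to approximate $u$ and $v$ by decreasing sequences of smooth $m$-sh functions with the same boundary behaviour (using Proposition \ref{0} or the density results of \cite{Chi2}), perform Stokes on the smooth level where everything is classical, and then pass to the limit using the weak continuity of the Hessian-type operators along decreasing sequences together with the uniform bound $0\le\omega\le 1$ and the $L^\infty$ control on $u,v$. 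Everything else is a routine sign-chasing once the chain rule and Stokes are in place.
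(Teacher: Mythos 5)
The paper does not actually prove this lemma; it just cites [N--Ph, Lemma 3.2]. Your integration-by-parts scheme is the standard route to that result, and the algebraic core is right: expand $dd^c(w^k)$ with $w=v-u$ by the chain rule, discard the nonnegative gradient term $k(k-1)w^{k-2}dw\wedge d^cw\wedge T$ and the nonnegative term $k(1-\omega)w^{k-1}dd^cv\wedge T$, and what remains is the claimed bound.

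There is, however, a genuine gap in how you dispose of the boundary terms. The hypothesis is only $\lim_{z\to\partial\Omega}(u-v)=0$; this makes $w^k$ tend to $0$ at $\partial\Omega$ but gives no control on $d^cw$ there, so neither of your Stokes applications is justified as written. In particular the assertion that $\int_\Omega dd^c(w^k)\wedge T=0$ is false under these hypotheses: take $\Omega$ the unit ball, $u=|z|^2-1$, $v=0$, $k=1$, $T=\beta^{n-1}$; then $w=1-|z|^2\to 0$ at $\partial\Omega$ yet $\int_\Omega dd^cw\wedge\beta^{n-1}<0$. (Your alternative justification also fails: $dd^c(w^k)\wedge T$ is not a positive current, since $w$ is a difference of $m$-sh functions, not $m$-sh.) Smoothing $u$ and $v$ does not repair this, because the obstruction is the boundary behaviour of the gradient, not regularity. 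The standard fix --- and the one this paper itself deploys one step later, in the proof of Lemma \ref{6} --- is to first prove the inequality under the stronger hypothesis that $u=v$ outside a compact $K\Subset\Omega$, so that $w^k$ and $d^c(w^k)$ vanish near $\partial\Omega$ and every Stokes application is clean, and then to deduce the general case by applying that special case to $v_\varepsilon:=\max(u,v-\varepsilon)$ and letting $\varepsilon\downarrow 0$; both sides converge by monotone convergence because $(v_\varepsilon-u)^j\nearrow(v-u)^j$ while the positive measures $dd^c\omega\wedge T$ and $(1-\omega)dd^cu\wedge T$ are fixed. With that reduction inserted, your argument goes through.
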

\begin{proof}
see [Nh-P, Lemma 3.2]
\end{proof}
\begin{lem}\label{6}
Assume that  $u, v\in\mathcal{SH}_m(\Omega)\cap L^{\infty}(\Omega)$ such that $u\leq v$ on $\Omega$ and 
$\displaystyle\lim_{z\rightarrow \partial\Omega}\left[u(z)-v(z)\right]=0,$ then for $1\leq k\leq m,$ and all $r\geq 1,$
$$
  \frac{1}{k!}\int_{\Omega}(v-u)^{k}dd^{c}\omega_{1}\wedge ...\wedge dd^{c}\omega_{m}\wedge \beta^{n-m} + \int_{\Omega}\tilde{\omega}_{1}(dd^{c}v)^{k}\wedge dd^{c}\omega_{k+1}\wedge...\wedge dd^{c}\omega_{m}\wedge \beta^{n-m}$$
  $$ \leq  \int_{\Omega}\tilde{\omega}_{1}(dd^{c}u)^{k}\wedge dd^{c}\omega_{k+1}\wedge...\wedge dd^{c}\omega_{m}\wedge \beta^{n-m}.
  $$
Where $\omega_j \in \mathcal{SH}_m(\Omega),$ $0\leq \omega_j \leq 1,$ $j=1,\cdots, k$, $\omega_{k+1},\cdots,\omega_{m} \in \mathcal{E}_m$ and $\tilde{\omega}_{1}=r-\omega_{1}.$
\end{lem}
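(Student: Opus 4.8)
The plan is to prove the inequality by a double induction on $k$, with the key tool being Lemma 1.5 applied repeatedly. First I would treat the base case $k=1$. Here the left-hand side reads
\[
\int_{\Omega}(v-u)\,dd^c\omega_1\wedge dd^c\omega_2\wedge\cdots\wedge dd^c\omega_m\wedge\beta^{n-m}+\int_{\Omega}\tilde\omega_1\,dd^cv\wedge dd^c\omega_2\wedge\cdots\wedge dd^c\omega_m\wedge\beta^{n-m}.
\]
Writing $T=dd^c\omega_2\wedge\cdots\wedge dd^c\omega_m\wedge\beta^{n-m}$, which is a closed $m$-positive current since $\omega_2,\dots,\omega_m\in\mathcal{E}_m$ (or are $[0,1]$-valued $m$-sh), I would apply Lemma 1.5 with $k=1$ to get $\int_\Omega(v-u)\,dd^c\omega_1\wedge T\le\int_\Omega(1-\omega_1)\,dd^cu\wedge T$. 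Then I add $\int_\Omega\tilde\omega_1\,dd^cv\wedge T$ to both sides, and since $\tilde\omega_1=r-\omega_1=(r-1)+(1-\omega_1)$ with $r-1\ge0$ and $dd^c(u-v)\wedge T\le0$ (because $u\le v$ and $u=v$ near $\partial\Omega$, so integration by parts is legitimate and the sign is controlled), I conclude $\int_\Omega\tilde\omega_1\,dd^cv\wedge T\le\int_\Omega\tilde\omega_1\,dd^cu\wedge T-\int_\Omega(1-\omega_1)\,dd^c(v-u)\wedge T$; combining gives exactly the $k=1$ statement. The boundary hypothesis $\lim_{z\to\partial\Omega}[u(z)-v(z)]=0$ is what makes every integration by parts free of boundary terms.

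For the inductive step, assuming the inequality holds for $k-1$, I would isolate one factor $dd^cv$ in the term $(dd^cv)^k$ and run the following manipulation. Set $S=(dd^cv)^{k-1}\wedge dd^c\omega_{k+1}\wedge\cdots\wedge dd^c\omega_m\wedge\beta^{n-m}$; apply Lemma 1.5 to the pair $u\le v$ with current $S$ and cutoff $\omega_k$ to bound $\int_\Omega(v-u)^{k}\,dd^c\omega_k\wedge S$ (up to the factor $k$) by $k\int_\Omega(1-\omega_k)(v-u)^{k-1}\,dd^cu\wedge S$. Iterating this over $\omega_k,\omega_{k-1},\dots,\omega_1$ peels off the $dd^c\omega_j$'s one at a time and, after dividing by $k!$, produces $\frac1{k!}\int_\Omega(v-u)^k\,dd^c\omega_1\wedge\cdots\wedge dd^c\omega_m\wedge\beta^{n-m}$ on one side. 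Simultaneously, I would use the induction hypothesis (with $v$ replaced inside the $(dd^c\cdot)^{k-1}$ slot and $u$ in one $dd^c$ slot, using that $dd^cu\le dd^cv$ as currents is meaningless pointwise but the difference $dd^c(v-u)$ paired against positive currents is nonnegative) to swap a remaining $dd^cv$ for $dd^cu$ and telescope the $\tilde\omega_1(dd^cv)^k$ term into the $\tilde\omega_1(dd^cu)^k$ term. The algebra of keeping track of which factors are $u$, which are $v$, and where the $\tilde\omega_1$ sits is the bookkeeping heart of the argument, but each individual move is just Lemma 1.5 or a sign from $v\ge u$.

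The main obstacle I anticipate is organizing this telescoping cleanly: the natural intermediate quantities are the "mixed" integrals $\int_\Omega\tilde\omega_1(dd^cv)^j\wedge(dd^cu)^{k-j}\wedge dd^c\omega_{k+1}\wedge\cdots\wedge dd^c\omega_m\wedge\beta^{n-m}$ for $0\le j\le k$, and I would want to show each step $j\to j-1$ costs exactly $\frac1{k!}$ times a piece of the $(v-u)^k$ integral. Getting the combinatorial constants ($\binom{k}{j}$ versus $k!$ versus the factor $k$ from Lemma 1.5) to match requires care, and this is precisely where one must mimic the corresponding computation in \cite{N-Ph}; I would follow that reference's argument step by step, checking that the adaptation to $\beta^{n-m}$-weighted currents changes nothing since Lemma 1.5 is already stated in the $m$-sh setting. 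A secondary technical point is justifying the integrations by parts for merely bounded (not continuous up to the boundary) $m$-sh functions: this is handled by approximating via the fundamental sequence $[\Omega_j]$ and using the vanishing-at-the-boundary hypothesis to discard boundary contributions in the limit, exactly as in the plurisubharmonic case.
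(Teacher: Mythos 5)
Your overall strategy is the paper's: iterate Lemma \ref{5} to peel off the factors $dd^c\omega_k,\dots,dd^c\omega_1$ (picking up the factor $k!$ and discarding the weights $1-\omega_j\le 1$), then convert the remaining term $k!\int(v-u)(dd^cu)^{k-1}\wedge dd^c\omega_1\wedge T$ into $k!\int\tilde\omega_1[(dd^cu)^k-(dd^cv)^k]\wedge T$ via the telescoping sum $\sum_{l=0}^{k-1}(dd^cu)^l\wedge(dd^cv)^{k-1-l}$ and one integration by parts. Your ``mixed integrals'' are exactly that telescoping sum, so the combinatorics you worry about do close up. However, two of your justifications are wrong as stated. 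First, $dd^c(v-u)\wedge T$ is \emph{not} a nonnegative current just because $v\ge u$ (you half-acknowledge this and then reassert it); the only legitimate sign information comes from integration by parts, namely $\int_\Omega dd^c(u-v)\wedge S=0$ and $\int_\Omega\varphi\,dd^c(v-u)\wedge S=\int_\Omega(v-u)\,dd^c\varphi\wedge S\ge0$ for $m$-sh $\varphi$, and these require $u-v$ to have compact support. Because of this, your $k=1$ combination does not assemble into the stated inequality: after applying Lemma \ref{5} and dropping $(1-\omega_1)$ you have already thrown away the exact identity $\int(v-u)dd^c\omega_1\wedge T+\int\tilde\omega_1 dd^cv\wedge T=\int\tilde\omega_1 dd^cu\wedge T$, which is what integration by parts gives directly (using $\int dd^c(u-v)\wedge T=0$ to absorb the constant $r$).

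Second, the reduction from the hypothesis $\lim_{z\to\partial\Omega}(u-v)=0$ to a situation where integration by parts is legitimate is a real step, and ``exhausting by $\Omega_j$ and discarding boundary contributions'' will not do it: on each $\partial\Omega_j$ the boundary terms are genuinely present and there is no reason they vanish in the limit. The paper's device is to set $v_\varepsilon=\max\{u,v-\varepsilon\}$, so that $v_\varepsilon-u$ is compactly supported and the first (compact-support) case applies verbatim; one then lets $\varepsilon\downarrow0$, using that $(dd^cv_\varepsilon)^k\wedge T\to(dd^cv)^k\wedge T$ weakly, that $0\le v_\varepsilon-u\nearrow v-u$, and that $\tilde\omega_1$ is lower semicontinuous to pass to the limit in each term. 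You need this (or an equivalent) explicitly; without it the general case of the lemma is not proved.
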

\begin{proof}
Suppose first that $u,\; v \in \mathcal{SH}_{m}(\Omega)\cap L^{\infty}(\Omega),$ $u\leq v$ on $\Omega$, and $u=v$  on $\Omega \backslash K$ with $K \Subset \Omega.$ For simplicity we put $T=dd^{c}\omega_{k+1}\wedge...\wedge dd^{c}\omega_m\wedge \beta ^{n-m}$, then  by lemma \ref{5} we have
\begin{eqnarray*}
\displaystyle\int_{\Omega}(v-u)^{k}dd^{c}\omega_{1}\wedge ...\wedge dd^{c}\omega_{m}\wedge \beta^{n-m}&\leq& k\int_{\Omega}(1-\omega_k)(v-u)^{k-1} dd^{c}u\wedge dd^{c}\omega_{1}\wedge ...\wedge dd^{c}\omega_{k-1}\wedge T \;\;\;\;\;\;\; \;\;\;\;\;\;\;\;\;\;\;\;\;\;\;\;\;\;\;\;\;\;\;\;\;\;\;\;\\
&\leq& k\displaystyle\int_{\Omega}(v-u)^{k-1} dd^{c}u\wedge dd^{c}\omega_{1}\wedge ...\wedge dd^{c}\omega_{k-1}\wedge T\\
 &\leq &  k(k-1)\displaystyle\int_{\Omega}(v-u)^{k-2} (dd^{c}u)^2\wedge dd^{c}u\wedge dd^{c}\omega_{1}\wedge ...\wedge dd^{c}\omega_{k-2}\wedge T\\ 
&\vdots& \\
 &\leq& k!\displaystyle\int_{\Omega}(v-u) (dd^{c}u)^{k-1}\wedge dd^{c}\omega_1\wedge T\\
   &\leq & k!\displaystyle\int_{\Omega}(v-u)\left[\displaystyle\sum_{l=0}^{k-1}(dd^cu)^l\wedge
(dd^cv)^{k-1-l}\right]\wedge dd^{c}\omega_1\wedge T\\
 &=&  k!\displaystyle\int_{\Omega}\tilde{\omega}_{1}dd^c(u-v)\wedge\left[\displaystyle\sum_{l=0}^{k-1}(dd^cu)^l\wedge(dd^cv)^{k-1-l}\right]\wedge T\\
 &=&  k!\displaystyle\int_{\Omega}\tilde{\omega}_{1}(dd^cu)^k\wedge T -k!\int_{\Omega}\tilde{\omega}_{1} (dd^cv)^{k}\wedge T.
\end{eqnarray*}
In the general case, for each $\varepsilon>0,$ we put $v_{\varepsilon}:=\max\{u, v-\varepsilon\}.$ Then, $v_{\varepsilon}\uparrow v\;\hbox{as}\; \varepsilon\; \hbox{tends to}\; 0,\\
 v_{\varepsilon}\geq u\; \hbox{on}\;\Omega$,\; \hbox{and}\; $v_{\varepsilon}=u \;\hbox{on} \;\Omega\backslash K,$
where $K \Subset \Omega,$ then, in one hand we have
$$\frac{1}{k!}\displaystyle\int_{\Omega}\displaystyle(v_{\varepsilon}-u)^{k}dd^{c}\omega_{1}\wedge ...\wedge dd^{c}\omega_{m}\wedge \beta^{n-m}+ \displaystyle\int_{\Omega}\tilde{\omega}_{1}(dd^{c}v_{\varepsilon})^{k}\wedge T\leq \displaystyle\int_{\Omega}\tilde{\omega}_{1}(dd^{c}u)^{k}\wedge T.$$
On the other hand  $0\leq v_{\varepsilon}-u\nearrow v-u$ and by [\cite{Chi1}, Theorem 1.3.10], $(dd^{c}v_{\varepsilon})^k \wedge T$ converges weakly to $(dd^{c}v)^k \wedge T$ as $\varepsilon\downarrow0.$ since $\tilde{\omega}_{1}$ is lower semicontinous, then by letting $\varepsilon$ tends to $0$ we obtain the desired inequality.
\end{proof}
\emph{\textbf{Proof of proposition \ref{4}.}}
\begin{itemize}
\item[$(a)$] Let $[u_j]$, $[v_j] \subset \mathcal{E}^{0}_{m}$  such that $u_j\searrow u$ and $v_j \searrow v,$ as in the definition of $\mathcal{F}_m$. Replacing  $v_j$ by $\max\{u_j, v_j\}$ and using Lemma \ref{6}  we have for all $1\leq j\leq s.$
$$\frac{1}{k!}\displaystyle\int_{\Omega}(v_{j}-u_s)^{k}dd^{c}\omega_{1}\wedge ...\wedge dd^{c}\omega_{m}\wedge \beta^{n-m}+ \displaystyle\int_{\Omega}\tilde{\omega}_{1}(dd^{c}v_{j})^{k}\wedge T
   \leq \displaystyle\int_{\Omega}\tilde{\omega}_{1}(dd^{c}u_s)^{k}\wedge T.$$
where $T=dd^{c}\omega_{1}\wedge...\wedge dd^{c}\omega_m\wedge \beta ^{n-m}$. Let $s\longrightarrow+\infty$ in the above inequality, then by Proposition \ref{29} we get
$$
  \frac{1}{k!}\displaystyle\int_{\Omega}(v_{j}-u)^{k}dd^{c}\omega_{1}\wedge ...\wedge dd^{c}\omega_{m}\wedge \beta^{n-m}+ \displaystyle\int_{\Omega}\tilde{\omega}_{1}(dd^{c}v_{j})^{k}\wedge T
   \leq \displaystyle\int_{\Omega}\tilde{\omega}_{1}(dd^{c}u)^{k}\wedge T,$$
for all  $j\geq 1,$ Finally by letting $j$ tends to $+\infty,$ and again by Proposition \ref{29} we obtain the result.
\item[$(b)$] Let $G, W$ be open sets such that $K \Subset G \Subset W \Subset \Omega,$ by [\cite{Chi1}, Remark 1.7.6] we can find $\widetilde{v}\in\mathcal{F}_m$ with $\widetilde{v}\geq v$ and $\widetilde{v}=v$ on $W$. Set
  $$ \widetilde{u}=\left\{
     \begin{array}{ll}
       u \ \ \ \ \ \  $sur$ \ \  G, \\
        \tilde{v} \ \ \ \ \ \ $sur$ \ \ \Omega\backslash G.
     \end{array}
   \right.$$   
Since  $\widetilde{v}=u=v$ on $W \backslash K,$ then $\widetilde{u} \in \mathcal{SH}^{-}_{m}(\Omega).$ Furthermore $\widetilde{u}\in \mathcal{F}_{m},$ $\widetilde{u}\leq \widetilde{v}$ and $\widetilde{u}=u$ on $W.$ Hence by $a),$ we have
$$
 \frac{1}{k!}\displaystyle\int_{W}(\widetilde{v}-\widetilde{u})^{k}dd^{c}\omega_{1}\wedge ...\wedge dd^{c}\omega_{m}\wedge \beta^{n-m}+ \displaystyle\int_{\Omega}\tilde{\omega}_{1}(dd^{c}\widetilde{v})^{k}\wedge T
  \leq \displaystyle\int_{\Omega}\tilde{\omega}_{1}(dd^{c}\widetilde{u})^{k}\wedge T.$$
However, since $\widetilde{u}=\widetilde{v}$ sur $\Omega\setminus G\supset\Omega\setminus W,$ we get
$$
\frac{1}{k!}\displaystyle\int_{W}(\widetilde{v}-\widetilde{u})^{k}dd^{c}\omega_{1}\wedge ...\wedge dd^{c}\omega_{m}\wedge \beta^{n-m}+\displaystyle\int_{W}\tilde{\omega}_{1}(dd^{c}\widetilde{v})^{k}\wedge T\leq
\displaystyle\int_{W}\tilde{\omega}_{1}(dd^{c}\widetilde{u})^{k}\wedge T.
$$
On the other hand, since $\widetilde{u}=u,\; \widetilde{v}=v$ on $W$ and $u=v$ on $\Omega\backslash K\supset\Omega\setminus W,$  we obtain
$$ \frac{1}{k!}\displaystyle\int_{\Omega}(v-u)^{k}dd^{c}\omega_{1}\wedge ...\wedge dd^{c}\omega_{m}\wedge \beta^{n-m}+ \displaystyle\int_{\Omega}\tilde{\omega}_{1}(dd^{c}v)^{k}\wedge T\leq \displaystyle\int_{\Omega}\tilde{\omega}_{1}(dd^{c}u)^{k}\wedge T.$$\qed 
\end{itemize}
For similar result we can see \cite{Dh-Elkh}.
\begin{thm}\label{11}
Let $u, v \in \mathcal{E}_m$ and $1\leq k\leq m$ are such that
$\displaystyle\liminf_{z\rightarrow \partial\Omega}\left[u(z)-v(z)\right]\geq0.$ Then  we have
$$
\frac{1}{k!}\displaystyle\int_{\{u<v\}}(v-u)^{k}dd^{c}\omega_{1}\wedge ...\wedge dd^{c}\omega_{m}\wedge \beta^{n-m}
 + \displaystyle\int_{\{u<v\}}\tilde{\omega}_{1}(dd^{c}v)^{k}\wedge dd^{c}\omega_{k+1}\wedge ...\wedge dd^{c}\omega_{m}\wedge \beta^{n-m}
 $$
 $$
  \leq  \displaystyle\int_{\{u<v\}\cup\{u=v=-\infty\}}\tilde{\omega}_{1}(dd^{c}u)^{k}\wedge dd^{c}\omega_{k+1}\wedge ...\wedge dd^{c}\omega_{m}\wedge \beta^{n-m},
 $$
for all $\omega_j \in \mathcal{SH}_m(\Omega),$ $0\leq \omega_j \leq 1,$ $j=1,\cdots, k$, $\omega_{k+1},\cdots,\omega_{m} \in \mathcal{E}_m$ and all $r\geq 1$ such that $\tilde{\omega}_{1}=r-\omega_{1}.$
\end{thm}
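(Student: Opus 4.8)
\emph{Plan of the proof.} The idea is to reduce the statement to Proposition~\ref{4}, which treats the \emph{ordered} situation $u\le v$, by comparing $u$ not with $v$ but with the majorant $w:=\max(u,v)\in\mathcal{E}_m$. One has $u\le w$ and, since $\limsup_{z\to\partial\Omega}[v(z)-u(z)]\le 0$, also $\lim_{z\to\partial\Omega}[w(z)-u(z)]=0$. Throughout put $T:=dd^{c}\omega_{k+1}\wedge\cdots\wedge dd^{c}\omega_{m}\wedge\beta^{n-m}$.

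First suppose $u,v\in\mathcal{F}_m$, and (approximating $\omega_{k+1},\dots,\omega_m$ from above by $\mathcal{F}_m$-functions and invoking Proposition~\ref{29} if necessary) that $\omega_{k+1},\dots,\omega_m\in\mathcal{F}_m$. Then $w\in\mathcal{F}_m$, so Proposition~\ref{4}(a) applied to the pair $u\le w$ with the forms $\omega_1,\dots,\omega_m$ and the given $r$ gives
\begin{equation*}
\frac{1}{k!}\int_{\Omega}(w-u)^{k}\,dd^{c}\omega_{1}\wedge\cdots\wedge dd^{c}\omega_{m}\wedge\beta^{n-m}+\int_{\Omega}\tilde{\omega}_{1}(dd^{c}w)^{k}\wedge T\;\le\;\int_{\Omega}\tilde{\omega}_{1}(dd^{c}u)^{k}\wedge T .
\end{equation*}
Now $(w-u)^{k}=\mathbf{1}_{\{u<v\}}(v-u)^{k}$, so the first term is already the one in the statement. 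For the second term I would use the well-known local identities for a maximum, $\mathbf{1}_{\{u<v\}}(dd^{c}w)^{k}\wedge T=\mathbf{1}_{\{u<v\}}(dd^{c}v)^{k}\wedge T$ and $\mathbf{1}_{\{u>v\}}(dd^{c}w)^{k}\wedge T=\mathbf{1}_{\{u>v\}}(dd^{c}u)^{k}\wedge T$ (obtained by approximating $v$, resp.\ $u$, from above by continuous $m$-subharmonic functions, for which the relevant level sets become open), together with the fact that on the contact set the maximum can only create mass, i.e.\ $\mathbf{1}_{\{u=v>-\infty\}}(dd^{c}w)^{k}\wedge T\ge \mathbf{1}_{\{u=v>-\infty\}}(dd^{c}u)^{k}\wedge T$. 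Decomposing $\Omega=\{u<v\}\sqcup\{u>v\}\sqcup\{u=v>-\infty\}\sqcup\{u=v=-\infty\}$ in both integrals above and cancelling the (finite, because all data lie in $\mathcal{F}_m$) common contribution over $\{u>v\}$ and, by the contact-set inequality, over $\{u=v>-\infty\}$, one is left with
\begin{equation*}
\frac{1}{k!}\int_{\{u<v\}}(v-u)^{k}\,dd^{c}\omega_{1}\wedge\cdots\wedge dd^{c}\omega_{m}\wedge\beta^{n-m}+\int_{\{u<v\}}\tilde{\omega}_{1}(dd^{c}v)^{k}\wedge T\;\le\;\int_{\{u<v\}\cup\{u=v=-\infty\}}\tilde{\omega}_{1}(dd^{c}u)^{k}\wedge T ,
\end{equation*}
which is the assertion in this case. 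Note that $\{u=v=-\infty\}$ \emph{must} survive on the right: on this $m$-polar set $w=-\infty$ as well, and by Lemma~\ref{28}(1) the $w$-measure there is \emph{dominated} by the $u$-measure, so the corresponding integral cannot be cancelled.

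For general $u,v\in\mathcal{E}_m$ one passes to the limit; one may assume the right-hand side is finite, otherwise there is nothing to prove. Set $u_s:=\max(u,-s)$ and $v_s:=\max(v,-s)$; using $\max(v-\varepsilon,-s)\ge\max(v,-s)-\varepsilon$ one checks $\liminf_{z\to\partial\Omega}[u_s-v_s]\ge 0$. Since $\max(0,v_s-u_s)\to 0$ near $\partial\Omega$, the set $\{v_s-u_s>\delta\}\Subset\Omega$, and a localisation exactly as in the proof of Proposition~\ref{4}(b) — gluing $\max(u_s,v_s-\delta)$ to a dominating $\mathcal{F}_m$-majorant outside that compact set — reduces the inequality over $\{u_s<v_s-\delta\}$ to the $\mathcal{F}_m$ case just treated. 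One then lets $\delta\downarrow 0$ (along a sequence avoiding the at most countably many values for which $\int_{\{u=v-\delta\}}\tilde{\omega}_{1}(dd^{c}u)^{k}\wedge T>0$) and $s\to\infty$: since $\{u_s<v_s\}\uparrow\{u<v\}$, $\mathbf{1}_{\{u_s<v_s\}}(v_s-u_s)^{k}\uparrow\mathbf{1}_{\{u<v\}}(v-u)^{k}$, and the Hessian measures converge by Corollary~\ref{00} and Lemma~\ref{28}, monotone convergence yields the inequality in the limit, the right-hand side decreasing to $\int_{\{u<v\}\cup\{u=v=-\infty\}}\tilde{\omega}_{1}(dd^{c}u)^{k}\wedge T$. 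I expect the main obstacle to be precisely this passage from $\mathcal{F}_m$ to $\mathcal{E}_m$: making rigorous the decomposition of $(dd^{c}\max(u,v))^{k}\wedge T$ on the contact set for mixed currents $T$ built from $\mathcal{E}_m$-functions, and ensuring that every integral one cancels is genuinely finite — which is exactly why one first descends to $\mathcal{F}_m$ (or bounded, localised) data and only afterwards removes the truncations. The fact that $\{u<v\}$ is merely Borel, so that the maximum identities must be pushed through continuous approximations, is a secondary technicality.
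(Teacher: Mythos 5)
Your route is genuinely different from the paper's, and the difference is where the gap lies. The paper does not pass through $\mathcal{F}_m$ at all: for each $\varepsilon>0$ it sets $\widetilde{v}=\max(u,v-\varepsilon)$ and observes that the hypothesis $\liminf_{z\to\partial\Omega}[u(z)-v(z)]\geq 0$ forces $\widetilde{v}=u$ \emph{outside a compact subset of} $\Omega$. This puts the pair $(u,\widetilde{v})$ squarely in the scope of Proposition~\ref{4}(b), which is stated for arbitrary $\mathcal{E}_m$ functions agreeing off a compact set, so the general case is handled in one stroke; one then restricts to $\{u<\widetilde{v}\}=\{u<v-\varepsilon\}$, uses locality of the Hessian operator on $\{u>v-\varepsilon\}$ (where $\widetilde{v}=u$), notes $\{u\leq v-\varepsilon\}\subset\{u<v\}\cup\{u=v=-\infty\}$, and lets $\varepsilon\downarrow 0$, so that only the \emph{set} on the right varies while the measure $\tilde{\omega}_1(dd^cu)^k\wedge T$ stays fixed and monotone convergence applies cleanly. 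Your choice of $w=\max(u,v)$ throws away exactly this feature: $w$ need not equal $u$ near $\partial\Omega$, so Proposition~\ref{4}(b) is unavailable and you are forced to (i) restrict to $\mathcal{F}_m$ to use part (a), (ii) analyse the contact set $\{u=v>-\infty\}$, which the $\varepsilon$-shift avoids entirely, and (iii) invent a separate limiting procedure to reach $\mathcal{E}_m$. Your $\mathcal{F}_m$ argument itself is coherent (the cancellation bookkeeping over the four pieces of $\Omega$ is correct, and the plurifine-locality identities you invoke are of the same nature as the one the paper uses on $\{u>v-\varepsilon\}$), but it solves the easy part of the problem.

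The genuine gap is the passage from $\mathcal{F}_m$ to $\mathcal{E}_m$. Truncating to $u_s=\max(u,-s)$, $v_s=\max(v,-s)$ destroys precisely the object the theorem is about: $(dd^cu_s)^k\wedge T$ charges no $m$-polar set (for bounded $u_s$ and suitable $T$), so each approximate inequality has no term over $\{u=v=-\infty\}$ on its right-hand side, and that term must somehow be recovered in the limit. Since $(dd^cu_s)^k\wedge T\to(dd^cu)^k\wedge T$ only weakly while the sets $\{u_s\leq v_s-\delta\}$ are varying Borel sets that are neither open nor closed, there is no general principle giving $\limsup_s\int_{\{u_s\leq v_s-\delta\}}\tilde{\omega}_1(dd^cu_s)^k\wedge T\leq\int_{\{u<v\}\cup\{u=v=-\infty\}}\tilde{\omega}_1(dd^cu)^k\wedge T$, nor the matching $\liminf$ bound for the term $\int_{\{u_s<v_s\}}\tilde{\omega}_1(dd^cv_s)^k\wedge T$ on the left; Corollary~\ref{00} and Lemma~\ref{28} control total masses and weak limits, not restrictions to moving Borel sets. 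A naive version of your limit would in fact deliver the inequality with only $\int_{\{u<v\}}$ on the right, which is a strictly stronger and in general false statement --- a sign that the difficulty is concentrated exactly in the step you defer. The fix is not to repair the truncation but to replace $\max(u,v)$ by $\max(u,v-\varepsilon)$ from the start: the boundary hypothesis then localises the problem for free and Proposition~\ref{4}(b) does all the work in $\mathcal{E}_m$ directly.
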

\begin{proof}
For each $\varepsilon>0$ we put $\widetilde{v}=\max(u,v-\varepsilon).$
By applying $b)$ in Proposition \ref{4} for $u$ and $\widetilde{v},$ we obtain
$$
\frac{1}{k!}\displaystyle\int_{\Omega}(\widetilde{v}-u)^{k}dd^{c}\omega_{1}\wedge ...\wedge dd^{c}\omega_{m}\wedge \beta^{n-m}
 + \displaystyle\int_{\Omega}\tilde{\omega}_{1}(dd^{c}\widetilde{v})^{k}\wedge T
  \leq  \displaystyle\int_{\Omega}\tilde{\omega}_{1}(dd^{c}u)^{k}\wedge T.
$$
Where T$=dd^{c}\omega_{k+1}\wedge ...\wedge dd^{c}\omega_{m}\wedge \beta^{n-m}$. Since $\{u<\tilde{v}\}=\{u<v-\varepsilon\},$ by lemma \ref{8}, we have
\begin{flushleft}
  $\frac{1}{k!}\displaystyle\int_{\{u<\widetilde{v}\}}(\widetilde{v}-u)^{k}dd^{c}\omega_{1}\wedge ...\wedge dd^{c}\omega_{m}\wedge \beta^{n-m}
+ \displaystyle\int_{\{u<\widetilde{v}\}}\tilde{\omega}_{1}(dd^{c}\widetilde{v})^{k}\wedge T$
 
  $\;\;\;\;\;\;\;\;\;\;\;\;\leq\frac{1}{k!}\displaystyle\int_{\{u<v-\varepsilon\}}(\widetilde{v}-u)^{k}dd^{c}\omega_{1}\wedge ...\wedge dd^{c}\omega_{m}\wedge \beta^{n-m}
+ \displaystyle\int_{\{u\leq v-\varepsilon\}}\tilde{\omega}_{1}(dd^{c}\widetilde{v})^{k}\wedge T$

 $\;\;\;\;\;\;\;\;\;\;\;\;\leq \frac{1}{k!}\displaystyle\int_{\Omega}(\widetilde{v}-u)^{k}dd^{c}\omega_{1}\wedge ...\wedge dd^{c}\omega_{m}\wedge \beta^{n-m}
+ \displaystyle\int_{\Omega}\tilde{\omega}_{1}(dd^{c}\widetilde{v})^{k}\wedge T-\displaystyle\int_{\{u>v-\varepsilon\}}\tilde{\omega}_{1}(dd^{c}\widetilde{v})^{k}\wedge T $

  $\;\;\;\;\;\;\;\;\;\;\;\;\leq \displaystyle\int_{\Omega}\tilde{\omega}_{1}(dd^{c}u)^{k}\wedge T-\displaystyle\int_{\{u>v-\varepsilon\}}\tilde{\omega}_{1}(dd^{c}\widetilde{v})^{k}\wedge T$

  $\;\;\;\;\;\;\;\;\;\;\;\;= \displaystyle\int_{\{u\leq v-\varepsilon\}}\tilde{\omega}_{1}(dd^{c}u)^{k}\wedge T$
  
  $\;\;\;\;\;\;\;\;\;\;\;\;\leq \displaystyle\int_{\{u<v\}\cup\{u=v=-\infty\}}\tilde{\omega}_{1}(dd^{c}u)^{k}\wedge T.$
\end{flushleft}
Letting $\varepsilon\searrow0$ we obtain the desired inequality.
\end{proof}
We now prove a Xing-type comparison principle for the class $\mathcal{N}_{m}(H)$.
\begin{thm}\label{25}
Let $H \in \mathcal{E}_{m},$ if $u \in \mathcal{N}_{m}(H)$ and $v \in \mathcal{E}_{m}$ such that  $v\leq H$ on $\Omega.$ Then For all sequence $[\omega_{k}]_{1\leq k\leq m} \subset \mathcal{SH}_{m}(\Omega)\cap L^{\infty}(\Omega)$ with $-1\leq\omega_{k}\leq0,$  we have  
$$
  \frac{1}{m!}\displaystyle\int_{\{u<v\}}(v-u)^{m}dd^{c}\omega_{1}\wedge ...\wedge dd^{c}\omega_{m}\wedge \beta^{n-m}
+ \displaystyle\int_{\{u<v\}}(-\omega_{1})H_m(v)\;\;\;\;$$
$$
  \leq  \displaystyle\int_{\{u<v\}}(-\omega_{1})H_m(u)+\int_{\{u=v=-\infty\}}(-\omega_{1})H_m(u).
$$
\end{thm}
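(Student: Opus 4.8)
The plan is to reduce Theorem~\ref{25} to the Xing inequality of Theorem~\ref{11}, which already applies to any $u,v\in\mathcal{E}_m$ with $\liminf_{z\to\partial\Omega}(u(z)-v(z))\ge0$: taking there $k=m$, $r=1$ and the test functions $\omega_i+1\in\mathcal{SH}_m(\Omega)$ (so that $0\le\omega_i+1\le1$, $dd^c(\omega_i+1)=dd^c\omega_i$ and $r-(\omega_1+1)=-\omega_1$) gives precisely an inequality of the shape in Theorem~\ref{25} for such a pair. If $\varphi\in\mathcal{N}_m$ satisfies $\varphi+H\le u\le H$, then from $v\le H$ one only gets $u-v\ge\varphi$, and since $\varphi$ need not tend to $0$ at $\partial\Omega$ this boundary hypothesis fails for $(u,v)$ in general. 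The idea is therefore to approximate $v$, not $u$, in such a way that the boundary hypothesis is restored while the measure $H_m(u)$ on the right-hand side is left unchanged.

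First a harmless reduction: replacing $v$ by $\max(u,v)$ changes none of the quantities in the statement, because $\{u<\max(u,v)\}=\{u<v\}$, on this open set $\max(u,v)=v$ so $H_m(\max(u,v))=H_m(v)$ there by locality and $(\max(u,v)-u)^m=(v-u)^m$, $\{u=\max(u,v)=-\infty\}=\{u=v=-\infty\}$, and $\max(u,v)\in\mathcal{E}_m$ with $\max(u,v)\le H$. So we may assume $u\le v\le H$.

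Now let $[\Omega_j]$ be the fundamental sequence and let $\varphi^j$ be defined from $\varphi$ as in Definition~\ref{120}; then $\varphi\le\varphi^j\le0$, $\varphi^j\in\mathcal{E}_m$, $\varphi^j=\varphi$ on $\Omega\setminus\overline{\Omega_j}$, and $\varphi^j\uparrow\widetilde\varphi=0$ quasi-everywhere since $\varphi\in\mathcal{N}_m$. Put $v_j:=\max(u,\,v+\varphi^j)$. Then $v+\varphi^j\in\mathcal{E}_m$, hence $v_j\in\mathcal{E}_m$, with $u\le v_j\le v$ and $v_j\uparrow v$ quasi-everywhere; and on $\Omega\setminus\overline{\Omega_j}$ we have $v+\varphi^j=v+\varphi\le u$ (using $u-v\ge\varphi$), so $v_j=u$ there, whence $\lim_{z\to\partial\Omega}(u(z)-v_j(z))=0$. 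Applying Theorem~\ref{11} to the pair $(u,v_j)$ as above, we get for every $j$
$$\frac{1}{m!}\int_{\{u<v_j\}}(v_j-u)^m\,dd^c\omega_1\wedge\cdots\wedge dd^c\omega_m\wedge\beta^{n-m}+\int_{\{u<v_j\}}(-\omega_1)H_m(v_j)$$
$$\le\int_{\{u<v_j\}\cup\{u=v_j=-\infty\}}(-\omega_1)H_m(u).$$

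Finally one lets $j\to\infty$. On the right $H_m(u)$ is a fixed measure; off the m-polar set where $\varphi^j$ does not increase to $0$ the sets $\{u<v_j\}\cup\{u=v_j=-\infty\}$ increase to $\{u<v\}\cup\{u=v=-\infty\}$, and on that m-polar set $H_m(u)$ is carried by $\{u=-\infty\}$ (cf. Lemma~\ref{28} and the structure of Hessian measures on m-polar sets), where the integration domains contain every point for all $j$, so monotone convergence gives $\int_{\{u<v\}}(-\omega_1)H_m(u)+\int_{\{u=v=-\infty\}}(-\omega_1)H_m(u)$. The first left-hand term converges to its analogue in Theorem~\ref{25} by monotone convergence, the current $dd^c\omega_1\wedge\cdots\wedge dd^c\omega_m\wedge\beta^{n-m}$ of bounded $m$-sh functions charging no m-polar set. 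For the second term, $v_j\uparrow v$ quasi-everywhere gives $v_j\to v$ in $\widetilde C_m$-capacity, hence $H_m(v_j)\to H_m(v)$ weakly by Corollary~\ref{00}; since $-\omega_1$ is lower semicontinuous and nonnegative and the open sets $\{u<v_j\}$ increase to the open set $\{u<v\}$, testing against $(-\omega_1)$ on relatively compact open subsets $G\Subset\{u<v\}$ (which lie in $\{u<v_j\}$ for large $j$) and exhausting $\{u<v\}$ gives $\liminf_j\int_{\{u<v_j\}}(-\omega_1)H_m(v_j)\ge\int_{\{u<v\}}(-\omega_1)H_m(v)$. Since the left-hand side above thus has lower limit at least $\frac{1}{m!}\int_{\{u<v\}}(v-u)^m dd^c\omega_1\wedge\cdots\wedge\beta^{n-m}+\int_{\{u<v\}}(-\omega_1)H_m(v)$ while the right-hand side converges to $\int_{\{u<v\}}(-\omega_1)H_m(u)+\int_{\{u=v=-\infty\}}(-\omega_1)H_m(u)$, the desired inequality follows. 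The two genuinely delicate points are the choice of the approximating sequence $v_j$ and this last lower-limit estimate for the $H_m(v_j)$-term; the m-polar bookkeeping, though requiring some care, is routine.
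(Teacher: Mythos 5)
Your overall strategy is the paper's: take $\varphi\in\mathcal{N}_m$ with $\varphi+H\le u\le H$, form $\varphi^j$ as in Definition \ref{120}, build competitors that coincide with $u$ near $\partial\Omega$, apply Theorem \ref{11} with $k=m$, $r=1$, $\tilde\omega_1=\omega_1+1$, and pass to the limit using that the relevant measures put no mass on $m$-polar sets. The boundary reduction, the treatment of the first left-hand term, and the right-hand-side bookkeeping (where $\{u=-\infty\}$ is contained in every integration domain) are sound. But your handling of the term $\int_{\{u<v_j\}}(-\omega_1)H_m(v_j)$ has a genuine gap. The sets $\{u<v_j\}$ and $\{u<v\}$ are \emph{not open}: $u$ and $v$ are merely upper semicontinuous, so $v-u$ has no semicontinuity and $\{v-u>0\}$ is only an $F_\sigma$. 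Hence the portmanteau-type argument ``test on open $G\Subset\{u<v\}$'' does not apply; moreover, even for a compact $K\subset\{u<v\}$ you cannot conclude $K\subset\{u<v_j\}$ for large $j$, because $\varphi^j\uparrow 0$ only quasi-everywhere and pointwise, not uniformly on compacts. So the key estimate $\liminf_j\int_{\{u<v_j\}}(-\omega_1)H_m(v_j)\ge\int_{\{u<v\}}(-\omega_1)H_m(v)$ is not established. The root cause is the choice $v_j=\max(u,v+\varphi^j)$: it turns $H_m(v_j)$ into a genuinely varying measure with no pointwise comparison to $H_m(v)$, which forces you into weak convergence against moving, non-open domains.

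The paper avoids this entirely by taking the competitor to be $v+\varphi^j-\varepsilon$ itself (no max with $u$); then $H_m(v+\varphi^j)\ge H_m(v)$ by multilinearity and positivity of the mixed Hessian currents, so the term in question is bounded below by $\int_{\{u<v+\varphi^j-\varepsilon\}}(-\omega_1)\chi_{\{v>-\infty\}}H_m(v)$, i.e.\ an increasing (q.e.) sequence of indicators integrated against the \emph{fixed} measure $(-\omega_1)\chi_{\{v>-\infty\}}H_m(v)$, which vanishes on $m$-polar sets; ordinary monotone convergence then yields the limit, and $\{u<v-\varepsilon\}\cap\{v=-\infty\}=\varnothing$ lets one discard the factor $\chi_{\{v>-\infty\}}$. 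You can repair your argument the same way, either by dropping the max, or by inserting the inequality $\chi_{\{u<v+\varphi^j\}}H_m(\max(u,v+\varphi^j))\ge\chi_{\{u<v+\varphi^j\}}H_m(v+\varphi^j)\ge\chi_{\{u<v+\varphi^j\}}H_m(v)$ before passing to the limit. A second, more minor, flaw of the same nature: your preliminary reduction asserts $H_m(\max(u,v))=H_m(v)$ on $\{u<v\}$ ``by locality on this open set''; again the set is not open, and the identity $\chi_{\{u<v\}}H_m(\max(u,v))=\chi_{\{u<v\}}H_m(v)$ is a nontrivial maximum-principle-type statement rather than an instance of locality. That reduction is in any case unnecessary for the proof.
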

\begin{proof}
Let $\omega_1\in \mathcal{SH}_{m}(\Omega)\cap L^{\infty}(\Omega)$ such that $-1\leq\omega_1\leq 0$ and $u \in \mathcal{N}_{m}(H)$, then there exists a function $\phi \in \mathcal{N}_{m}$ such that
                                             $H\geq u\geq \phi+H.$
Let $[\Omega_{j}]$ be the fondamental sequence of  $\Omega$ and $\phi^{j}$ defined as in Definition \ref{120}. Since $v\leq H$ this implies that for $\varepsilon > 0$ we have
                 $ u\geq \phi+H=\phi^{j}+ H\geq v+\phi^{j}-\varepsilon$ on $\Omega\setminus\Omega_j.$
Then by applying  Theorem \ref{11} for $u$, $v+\phi^{j}-\varepsilon$, $r=1$
and $\tilde{\omega}_1=\omega_1+1$ (in this case $0\leq\tilde{\omega}_1\leq1$), we get
$$
 \frac{1}{m!}\displaystyle\int_{\{u<v-\varepsilon+\phi^j\}}(v-\varepsilon+\phi^j-u)^{m}dd^{c}\omega_{1}\wedge ...\wedge dd^{c}\omega_{m}\wedge \beta^{n-m}
+\displaystyle\int_{\{u<v-\varepsilon+\phi^j\}}(-\omega_{1})H_m(v+\phi^j)$$
$$
\leq \displaystyle\int_{\{u\leq v-\varepsilon\}}(-\omega_{1})H_m(u).
$$
On the other hand, $\left[\chi_{\{u<v-\varepsilon+ \phi^{j}\}}\right]^{\infty}_{j=1}$ \; and \; $\left[\chi_{\{u<v-\varepsilon+ \phi^{j}\}}(v-\varepsilon+ \phi^{j}-u)^{m}\right]^{\infty}_{j=1}$
are two increasing sequences of functions that converges $q.e.$ on $\Omega$ to $\chi_{\{u<v-\varepsilon\}}$ and $\chi_{\{u<v-\varepsilon\}}(v-\varepsilon-u)^{m}$ respectively, as $j\rightarrow+\infty.$  Theorem \ref{51} implies that
$dd^{c}\omega_{1}\wedge ...\wedge dd^{c}\omega_{m}\wedge \beta^{n-m}
\ll C_m$ and $\chi_{\{v>-\infty\}}H_m(v)\ll C_m$. Therefore we get that $\left[\chi_{\{u<v-\varepsilon+ \varphi^{j}\}}\right]^{\infty}_{j=1}$ converges to $\chi_{\{u<v-\varepsilon\}}$ a.e. w.r.t. $\chi_{\{v>-\infty\}}H_m(v)$ and that
$\left[\chi_{\{u<v-\varepsilon+ \phi^{j}\}}(v-\varepsilon+ \phi^{j}-u)^m\right]^{\infty}_{j=1}$ converges to $\chi_{\{u<v-\varepsilon\}}(v-\varepsilon-u)^m$ a.e. w.r.t. $dd^{c}\omega_{1}\wedge ...\wedge dd^{c}\omega_{m}\wedge \beta^{n-m}
.$ Therefore, by the monotone convergence theorem we obtain
$$
  \frac{1}{m!}\displaystyle\int_{\{u<v-\varepsilon\}}(v-\varepsilon-u)^{m}dd^{c}\omega_{1}\wedge ...\wedge dd^{c}\omega_{m}\wedge \beta^{n-m}
+\displaystyle\int_{\{u<v-\varepsilon\}}(-\omega_{1})H_m(v) \leq\displaystyle\int_{\{u\leq v-\varepsilon\}}(-\omega_{1})H_m(u).
$$
The desired inequality is obtained by letting $\varepsilon \rightarrow 0^+$.
\end{proof}
\subsection{The Comparison and Identity Principles for the class $\mathcal{N}_{m}(H),$ $H \in \mathcal{E}_{m}$ }
We give now one of the most important result which will play a crucial role later in this paper: The comparison principle.
\begin{cor}\label{14}(The Comparison Principle).
Let $u, v, H \in \mathcal{E}_{m}$ be such that $H_m(u)$ vanishes on all $m$-polar sets in $\Omega$ and $H_m(u)\leq H_m(v)$. Consider the following two conditions:
\begin{description}
\item[(1)] $\displaystyle\liminf_{z\rightarrow\zeta }\left[u(z)-v(z)\right]\geq0\;\;\; \hbox{for\; every}\;\; \zeta \in \partial\Omega,$
\item[(2)]  $u \in \mathcal{N}_m(H), v\leq H.$
\end{description}
If one of the above conditions is satisfied, then $u\geq v$ on $\Omega.$
\end{cor}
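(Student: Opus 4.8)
The plan is to reduce the statement to showing that the Borel set $\{u<v\}$ is Lebesgue--negligible: a subharmonic function is the limit of its decreasing solid averages, so if $v\le u$ almost everywhere then $v\le u$ everywhere (and $u,v\in\mathcal{SH}_m(\Omega)\subset\mathcal{SH}(\Omega)$). I would fix once and for all a ball $B(0,R)\supset\Omega$ and set $w(z):=(|z|^{2}-R^{2})/R^{2}$, so that $w\in\mathcal{SH}_{m}(\Omega)\cap C^{\infty}(\Omega)$, $-1\le w\le0$, and $(dd^{c}w)^{m}\wedge\beta^{n-m}\ge c\,\beta^{n}$ on $\Omega$ for some $c=c(n,m,R)>0$. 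The skeleton common to both cases is: apply a Xing--type inequality (Theorem \ref{11} or Theorem \ref{25}) to $u$ and a modification $v'$ of $v$ chosen so that $\{u<v'\}\Subset\Omega$; notice that the boundary term over $\{u=v'=-\infty\}$ vanishes because $H_{m}(u)$ carries no mass on the $m$-polar set $\{u=-\infty\}$; cancel the term $\int_{\{u<v'\}}(-w)\,H_{m}(u)$ — finite, since $\{u<v'\}$ is relatively compact and $H_{m}(u)$ is locally finite — against $\int_{\{u<v'\}}(-w)\,H_{m}(v')$, which dominates it because $H_{m}(u)\le H_{m}(v)\le H_{m}(v')$ and $-w\ge0$; and conclude $\int_{\{u<v'\}}(v'-u)^{m}(dd^{c}w)^{m}\wedge\beta^{n-m}\le0$, hence $=0$, hence $\{u<v'\}$ is Lebesgue-null since $(dd^{c}w)^{m}\wedge\beta^{n-m}\ge c\,\beta^{n}$ and $v'-u>0$ on that set.

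For condition $(1)$ I would take $v'=v-\delta$, $\delta>0$: the hypothesis $\liminf_{z\to\zeta}[u-v]\ge0$ at every $\zeta\in\partial\Omega$, together with compactness of $\partial\Omega$, produces a neighbourhood of $\partial\Omega$ on which $u>v-\delta$, so $\{u<v-\delta\}\Subset\Omega$; here $v-\delta\in\mathcal{E}_{m}$ and $H_{m}(v-\delta)=H_{m}(v)$. Theorem \ref{11} with $k=m$, $r=1$ and $\omega_{1}=\dots=\omega_{m}=w+1$ (so $\widetilde\omega_{1}=-w$) applied to $u$ and $v-\delta$ runs through the skeleton above and shows that $\{u<v-\delta\}$ is Lebesgue-null; letting $\delta\downarrow0$ and writing $\{u<v\}=\bigcup_{k\ge1}\{u<v-1/k\}$ finishes this case.

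For condition $(2)$ I would choose $\varphi\in\mathcal{N}_{m}$ with $H\ge u\ge\varphi+H$, take the fundamental sequence $[\Omega_{j}]$ and the functions $\varphi^{j}$ of Definition \ref{120}, and use $v'=v+\varphi^{j}$. Since $\varphi^{j}=\varphi$ off $\Omega_{j}$, on $\Omega\setminus\Omega_{j}$ one has $u\ge\varphi+H=\varphi^{j}+H\ge\varphi^{j}+v$ (as $v\le H$), so $\{u<v+\varphi^{j}\}\subset\Omega_{j}\Subset\Omega$; and $v+\varphi^{j}\in\mathcal{E}_{m}$ with $v+\varphi^{j}\le H$, so Theorem \ref{25} applies to $u$ and $v+\varphi^{j}$ with $\omega_{1}=\dots=\omega_{m}=w$. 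The domination $H_{m}(u)\le H_{m}(v+\varphi^{j})$ needed in the cancellation step follows from $H_{m}(u)\le H_{m}(v)$ and the multinomial expansion of $(dd^{c}(v+\varphi^{j}))^{m}\wedge\beta^{n-m}$. The skeleton gives that $\{u<v+\varphi^{j}\}$ is Lebesgue-null for each $j$; since $\varphi^{j}\uparrow\widetilde\varphi=0$ quasi-everywhere, outside an $m$-polar (hence Lebesgue-null) set one has $\{u<v\}\subset\bigcup_{j}\{u<v+\varphi^{j}\}$, so $\{u<v\}$ is Lebesgue-null, which completes the proof.

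I expect the one genuinely delicate point to be the legitimacy of the cancellation — i.e.\ the finiteness of $\int(-w)\,H_{m}(u)$ over the relevant sublevel set — which is exactly the reason for replacing $v$ by $v-\delta$, resp.\ $v+\varphi^{j}$: these modifications push the sublevel set inside a fixed relatively compact subset of $\Omega$, where the locally finite Hessian measures have finite mass. Once that is arranged, the two standing hypotheses ($H_{m}(u)(P)=0$ for $m$-polar $P$, and $H_{m}(u)\le H_{m}(v)$) enter only through the two elementary inequalities above, and everything else is a direct invocation of Theorems \ref{11} and \ref{25}.
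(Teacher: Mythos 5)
Your proof is correct and follows essentially the same route as the paper: both cases rest on the Xing-type inequalities (Theorem \ref{11} for condition (1), Theorem \ref{25} for condition (2)), on the vanishing of $H_m(u)$ on the $m$-polar set $\{u=v'=-\infty\}$, and on cancelling the $H_m(u)$-term against the $H_m(v')$-term using $H_m(u)\leq H_m(v)\leq H_m(v')$ --- the paper merely takes the supremum over all admissible weights (concluding $C_m(\{u<v-\varepsilon\})=0$) where you use the single weight $w$ and Lebesgue measure, and your explicit attention to the finiteness needed for the cancellation and to the compactness of the sublevel set is a point the paper leaves implicit. The only micro-step you do not justify is that $v-\delta\in\mathcal{E}_m$ (needed to invoke Theorem \ref{11} for the pair $(u,v-\delta)$); this is true, and can also be sidestepped as in the paper's proof of Theorem \ref{11} by working with $\max(u,v-\delta)\geq u$, which lies in $\mathcal{E}_m$ automatically.
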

\begin{proof}
Suppose that $u, v, H \in \mathcal{E}_{m}$, such that $H_m(u)$ vanishes on all  $m$-polar sets in  $\Omega$ and $H_m(u)\leq H_m(v).$
\begin{itemize}
  \item[$(1)$] Let $\varepsilon>0$. Suppose that $\displaystyle\liminf_{z\rightarrow\zeta}\left[u(z)-v(z)\right]\geq0,$ for all $\zeta \in \partial\Omega$. Then by Theorem \ref{11} applied for $\omega=-\tilde{\omega}_j$ we have
\begin{eqnarray*}
    \frac{\varepsilon^m}{m!}C_m(\{u+2\varepsilon<v\})= \sup\displaystyle\left\{\frac{\varepsilon^m}{m!}\int_{\{u+2\varepsilon<v\}}H_m(\omega):\; \omega \in \mathcal{SH}_m(\Omega),\; -1\leq\omega\leq0\right\}\;\;\;\;\;\;\;\;\;\;\;\;\;\;\;\;\;\;\;\;\;\;\;\; &&\\
     \leq\sup\displaystyle\left\{\frac{1}{m!}\int_{\{u+\varepsilon<v\}}(v-\varepsilon-u)^mH_m(\omega):\; \omega \in \mathcal{SH}_m(\Omega),\; -1\leq\omega\leq0\right\}\;\;\;\;\;\;\;\;\;\;\;&& \\
     \leq\displaystyle\int_{\{u+\varepsilon<v\}}(-\omega)\left[H_m(u)-H_m(v)\right]\leq 0.\;\;\;\;\;\;\;\;\;\;\;\;\;\;\;\;\;\;\;\;\;\;\;\;\;\;\;\;\;\;\;\;\;\;\;\;\;\;\;\;\;\;\;\;\;\;\;\;\;\;\;\;\;\;\;(4)&&
\end{eqnarray*}
Thus, $u+2\varepsilon\geq v$. Let $\varepsilon$ tends to $0^+$, then we obtain that $u\geq v$ on $\Omega$.
  \item[$(2)$] Suppose now that $u \in \mathcal{N}_m(H)$ and $v\leq H.$ Then there exists $\varphi \in \mathcal{N}_m$ such that $H+\varphi\leq u\leq H$. Let $\varphi^j$ be defined as in  Definition \ref{120} and let $\varepsilon>0.$ By Theorem \ref{25} and using the same argument as in (4) for $\{u+2\varepsilon-\varphi^j<v\}$ we have $u+2\varepsilon\geq v+\varphi^j$. By letting $\varepsilon \rightarrow 0^+$, we obtain the inequality.
\end{itemize}
\end{proof}
\begin{lem}\label{18}
Let $\varphi \in \mathcal{SH}^{-}_m(\Omega)$ and $u, v \in\mathcal{N}_m(H)$ are such that $u\leq v$ and $T$ is a closed $m$-positive current of type $T=dd^c\omega_2\wedge...\wedge dd^c\omega_m\wedge\beta^{n-m}$, where $\omega_j \in \mathcal{E}_m$, $\forall j$. If $\displaystyle\int_{\Omega}(-\varphi)dd^cu\wedge T<+\infty.$ Then the following inequality holds
  \begin{equation}\label{13}
    \displaystyle\int_{\Omega}(-\varphi)dd^cv\wedge T\leq\displaystyle\int_{\Omega}(-\varphi)dd^cu\wedge T.
  \end{equation}
\end{lem}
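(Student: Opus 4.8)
The plan is to establish the integration-by-parts identity
$$\int_{\Omega}(-\varphi)\,dd^cv\wedge T-\int_{\Omega}(-\varphi)\,dd^cu\wedge T=\int_{\Omega}(-\varphi)\,dd^c(v-u)\wedge T=\int_{\Omega}(v-u)\,dd^c(-\varphi)\wedge T\le 0,$$
the final inequality holding because $v-u\ge 0$ while $dd^c(-\varphi)\wedge T=-\,dd^c\varphi\wedge dd^c\omega_2\wedge\cdots\wedge dd^c\omega_m\wedge\beta^{n-m}$ is a non-positive measure, all of $\varphi,\omega_2,\dots,\omega_m$ being $m$-sh and wedges of $m$-positive $(1,1)$-forms against $\beta^{n-m}$ being non-negative. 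Granting this, $(\ref{13})$ is immediate. The whole difficulty is that the integration by parts is legitimate only once the boundary contributions are shown to vanish, and none of $u,v,\varphi,\omega_2,\dots,\omega_m$ need be bounded; so the identity must be obtained by approximation from $\mathcal{E}_m^0$, the hypothesis $\int_\Omega(-\varphi)\,dd^cu\wedge T<+\infty$ being exactly what forces the approximation to converge.

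In detail, I would first fix a common minorant: since $u,v\in\mathcal{N}_m(H)$ and $u\le v$, choose $\phi\in\mathcal{N}_m$ with $\phi+H\le u$, so that $\phi+H\le u\le v\le H$. By Proposition \ref{0} there are decreasing sequences $[u_s],[v_s]\subset\mathcal{E}_m^0(H)$ with $u_s\downarrow u$ and $v_s\downarrow v$; replacing $v_s$ by $\max(u_s,v_s)$, which is again in $\mathcal{E}_m^0(H)$, we may assume $u_s\le v_s$, and then $0\le v_s-u_s\le -g_s\to 0$ on $\partial\Omega$, where $g_s\in\mathcal{E}_m^0$ satisfies $g_s+H\le u_s$. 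By \cite[Theorem 3.1]{Chi2} choose $[\varphi_s]\subset\mathcal{E}_m^0\cap C(\Omega)$ with $\varphi_s\downarrow\varphi$, and decreasing sequences $[\omega_j^s]\subset\mathcal{E}_m^0$ with $\omega_j^s\downarrow\omega_j$ for $2\le j\le m$; put $T_s=dd^c\omega_2^s\wedge\cdots\wedge dd^c\omega_m^s\wedge\beta^{n-m}$. Because $v_s-u_s$, $\varphi_s$ and the $\omega_j^s$ are bounded and $v_s-u_s$ vanishes on $\partial\Omega$, the integration-by-parts formula of the bounded theory applies and, after rearranging, gives
$$\int_{\Omega}(-\varphi_s)\,dd^cv_s\wedge T_s\le\int_{\Omega}(-\varphi_s)\,dd^cu_s\wedge T_s\qquad(s\ge 1),$$
both sides being finite for each fixed $s$ (one of the routine points to check, using Theorem \ref{40}).

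It remains to let $s\to+\infty$. On the left, for $s\ge s_0$ one has $-\varphi_{s_0}\le-\varphi_s$, so $\int_\Omega(-\varphi_s)\,dd^cv_s\wedge T_s\ge\int_\Omega(-\varphi_{s_0})\,dd^cv_s\wedge T_s$, and the latter converges to $\int_\Omega(-\varphi_{s_0})\,dd^cv\wedge T$ by weak convergence of the Hessian-type measures of decreasing sequences in $\mathcal{E}_m$ tested against the fixed bounded continuous weight $-\varphi_{s_0}$; letting $s_0\to+\infty$ and invoking monotone convergence ($-\varphi_{s_0}\uparrow-\varphi$) gives $\liminf_s\int_\Omega(-\varphi_s)\,dd^cv_s\wedge T_s\ge\int_\Omega(-\varphi)\,dd^cv\wedge T$. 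On the right, $-\varphi_s\le-\varphi$ yields $\int_\Omega(-\varphi_s)\,dd^cu_s\wedge T_s\le\int_\Omega(-\varphi)\,dd^cu_s\wedge T_s$, and one shows $\lim_s\int_\Omega(-\varphi)\,dd^cu_s\wedge T_s=\int_\Omega(-\varphi)\,dd^cu\wedge T$; this is a convergence statement of the type of Proposition \ref{29} and Theorem \ref{40}, obtained after localizing on the fundamental sequence $[\Omega_j]$ of Definition \ref{120} so that the relevant building blocks lie in $\mathcal{F}_m$, and after splitting off the $m$-polar part of $dd^cu\wedge T$ with Lemma \ref{28}, and it is the only place where $\int_\Omega(-\varphi)\,dd^cu\wedge T<+\infty$ enters. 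Chaining $\int_\Omega(-\varphi)\,dd^cv\wedge T\le\liminf_s(\cdots)\le\limsup_s(\cdots)\le\int_\Omega(-\varphi)\,dd^cu\wedge T$ gives $(\ref{13})$. I expect the main obstacle to be exactly this last convergence: upgrading the weak convergence of $dd^cu_s\wedge T_s$ tested against the lower semicontinuous weight $-\varphi$ from an inequality to a genuine limit, when $u$ lies only in $\mathcal{N}_m(H)$ and $\varphi,\omega_2,\dots,\omega_m$ may be unbounded — precisely where the finiteness hypothesis is used — while the rest is bookkeeping with the boundary behaviour.
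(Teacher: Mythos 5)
Your overall mechanism is the right one (move $dd^c$ onto $-\varphi$ and use $v-u\geq 0$ together with positivity of $-dd^c\varphi\wedge T$), but the proof as written has a genuine gap at the step you yourself flag as the main obstacle, and the tools you invoke do not close it. You need
$\limsup_{s}\int_{\Omega}(-\varphi)\,dd^cu_s\wedge T_s\leq\int_{\Omega}(-\varphi)\,dd^cu\wedge T$
for decreasing approximations $u_s\downarrow u$, $\omega_j^s\downarrow\omega_j$. Weak convergence of $dd^cu_s\wedge T_s$ to $dd^cu\wedge T$ tested against the nonnegative lower semicontinuous weight $-\varphi$ gives only $\liminf_s\int(-\varphi)\,dd^cu_s\wedge T_s\geq\int(-\varphi)\,dd^cu\wedge T$, i.e.\ the wrong direction. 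The correct direction is exactly the monotonicity of weighted energies under decreasing sequences, which is the content of the lemma itself; Proposition \ref{29} does supply it, but only for functions in $\mathcal{F}_m$ approximated from $\mathcal{E}_m^0$, whereas here $u\in\mathcal{N}_m(H)$ and $\omega_j\in\mathcal{E}_m$. The phrase ``after localizing on the fundamental sequence so that the building blocks lie in $\mathcal{F}_m$'' is not an argument: restricting to $\Omega_j$ changes both the functions and the measures, and there is no mechanism offered for reassembling the local statements into the global weighted inequality. So the decisive inequality is either circular or unsupported.

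The paper avoids this entirely by never approximating $u$ or $T$. It sets $v_j=\max(u,\psi^j+v)$ with $\psi\in\mathcal{N}_m$, $H+\psi\leq u$, and $\psi^j$ as in Definition \ref{120}; then $v_j=u$ outside $\Omega_j$, $u\leq v_j$, and $v_j\uparrow v$ quasi-everywhere. Because $u-v_j$ is compactly supported, Stokes' theorem on $\Omega_r\supset\Omega_j$ has no boundary terms and yields directly
$\int_{\Omega}(-\varphi_k)\,dd^cv_j\wedge T\leq\int_{\Omega}(-\varphi_k)\,dd^cu\wedge T$
for bounded continuous $\varphi_k\downarrow\varphi$, with $u$ and $T$ fixed throughout. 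The $v$-side limit is then handled by convergence in capacity of the increasing sequence $v_j$ (Corollary \ref{00}), for which the lower-semicontinuity direction is the one you need, and $k\to\infty$ is monotone convergence. If you want to salvage your version, replace your decreasing approximation of $v$ by this increasing, boundary-matching one; the decreasing approximation of $u$ should simply be dropped.
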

\begin{proof}
Let $[\Omega_{j}]$ be a fondamental sequence of $\Omega$ and $u\in \mathcal{N}_m(H),$ then there exists a function  $\psi \in \mathcal{N}_m$ such that $H\geq u\geq H+\psi.$ We set $v_j=\max(u, \psi^j+v),$ then $v_j \in \mathcal{E}_m, v_j=u$ on $\Omega\setminus\Omega_j,$ $u\leq v_j$ and $[v_j]$ is an increasing sequence that converges to $v$  q.e. on  $\Omega$, as $j\rightarrow +\infty.$
On the other hand, since $\varphi \in \mathcal{SH}_m(\Omega)$ then by  \cite[Theorem 3.1]{Chi2} there exists $[\varphi_k]\subset \mathcal{E}^0_m\cap C(\overline{\Omega})$, that converges pointwise to $\varphi$ as $k\rightarrow +\infty.$ Hence, by the stockes Theorem we obtain for $r\geq j$
$$
  \int_{\Omega_r}(-\varphi_k)dd^cu\wedge T- \int_{\Omega_r}(-\varphi_k)dd^cv_j\wedge T  = \int_{\Omega_r}(-\varphi_k)dd^c(u-v_j)\wedge T = \int_{\Omega_r}(v_j-u)dd^c\varphi_k\wedge T\geq 0.
$$
By\;letting\;  $r\rightarrow+\infty\;$ we\;get
\begin{equation}\label{15}
 \displaystyle\int_{\Omega}(-\varphi_k)dd^cu\wedge T\geq \displaystyle\int_{\Omega}(-\varphi_k)dd^cv_j\wedge T.\;\;\;\;\;\;\;\;\;\;\;\;
 \end{equation}
Since $v_j$ converges q.e to $v$, then $v_j$ converges to $v$ in $C_m$-capacity and since $\varphi_k$ is bounded, then it follows from Corollary \ref{00} that 
          $(-\varphi_k)dd^cv_j\wedge T$ converges weakly to $(-\varphi_k)dd^cv\wedge T\;\; \hbox{as}\;\; j\rightarrow +\infty.$ Thus
\begin{equation}\label{16}
\lim_{j\rightarrow+\infty}\displaystyle\int_{\Omega}(-\varphi_k)dd^cv_j\wedge T\geq \int_{\Omega}(-\varphi_k)dd^cv\wedge T.
 \end{equation}
Inequalities  (\ref{15}) and (\ref{16}) imply that (\ref{13}) holds for $\varphi_k$. By the  monotone convergence theorem we completes the proof, when we let $k\rightarrow+\infty.$
\end{proof}
\begin{pro}\label{19}
Let $H \in \mathcal{E}_{m}$ and $\varphi\in\mathcal{SH}^{-}_m(\Omega)$. If $[u_j]$, $u_j \in \mathcal{N}_m(H)$, is a decreasing sequence that  converges pointwise on $\Omega$ to a function $u \in \mathcal{N}_m(H)$ as $j\rightarrow +\infty$, then
 \begin{equation}\label{17}
      \displaystyle\lim_{j\rightarrow+\infty}\int_{\Omega}(-\varphi)H_m(u_j)= \int_{\Omega}(-\varphi)H_m(u).
 \end{equation}
\end{pro}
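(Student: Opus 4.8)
The plan is to prove separately the lower bound $\liminf_{j\to+\infty}\int_{\Omega}(-\varphi)H_m(u_j)\ge\int_{\Omega}(-\varphi)H_m(u)$ and the upper bound $\limsup_{j\to+\infty}\int_{\Omega}(-\varphi)H_m(u_j)\le\int_{\Omega}(-\varphi)H_m(u)$. The lower bound will come from weak convergence of the Hessian measures together with lower semicontinuity of integration against the non-negative lower semicontinuous function $-\varphi$, and it will also settle at once the case $\int_{\Omega}(-\varphi)H_m(u)=+\infty$; the upper bound will come from iterating Lemma \ref{18}.

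\emph{Lower bound.} Since $u_j\downarrow u$ and $u\in\mathcal{N}_m(H)\subset\mathcal{E}_m$, the decreasing sequence $[u_j]$ converges to $u$ in $\tilde{C}_m$-capacity (the standard fact for decreasing sequences in $\mathcal{E}_m$). As $u\le u_j$ on $\Omega$ for every $j$, Corollary \ref{00} applied with $u_0:=u$ gives $H_m(u_j)\to H_m(u)$ weakly. Approximating the non-negative lower semicontinuous function $-\varphi$ from below by an increasing sequence of non-negative continuous functions with compact support in $\Omega$ and passing to the limit, weak convergence yields
$$\liminf_{j\to+\infty}\int_{\Omega}(-\varphi)H_m(u_j)\;\ge\;\int_{\Omega}(-\varphi)H_m(u).$$
In particular, if $\int_{\Omega}(-\varphi)H_m(u)=+\infty$, then $\int_{\Omega}(-\varphi)H_m(u_j)\to+\infty$ and (\ref{17}) follows; so from now on I assume $\int_{\Omega}(-\varphi)H_m(u)<+\infty$.

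\emph{Upper bound.} I would prove by induction on $\ell\in\{0,1,\dots,m\}$ that, for every fixed $j$,
$$\int_{\Omega}(-\varphi)(dd^cu_j)^{\ell}\wedge(dd^cu)^{m-\ell}\wedge\beta^{n-m}\;\le\;\int_{\Omega}(-\varphi)H_m(u).$$
The case $\ell=0$ is a trivial equality. Assume the bound holds at level $\ell<m$ and put $T_\ell:=(dd^cu_j)^{\ell}\wedge(dd^cu)^{m-1-\ell}\wedge\beta^{n-m}$; since $u,u_j\in\mathcal{N}_m(H)\subset\mathcal{E}_m$, this is a closed $m$-positive current of the type allowed in Lemma \ref{18}. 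Because $dd^cu\wedge T_\ell=(dd^cu_j)^{\ell}\wedge(dd^cu)^{m-\ell}\wedge\beta^{n-m}$, the induction hypothesis gives $\int_{\Omega}(-\varphi)dd^cu\wedge T_\ell<+\infty$, so Lemma \ref{18} applies to the pair $u\le u_j$ with current $T_\ell$ and yields $\int_{\Omega}(-\varphi)dd^cu_j\wedge T_\ell\le\int_{\Omega}(-\varphi)dd^cu\wedge T_\ell$; since $dd^cu_j\wedge T_\ell=(dd^cu_j)^{\ell+1}\wedge(dd^cu)^{m-1-\ell}\wedge\beta^{n-m}$, this is exactly the bound at level $\ell+1$. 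Taking $\ell=m$ gives $\int_{\Omega}(-\varphi)H_m(u_j)\le\int_{\Omega}(-\varphi)H_m(u)$ for every $j$, hence $\limsup_j\int_{\Omega}(-\varphi)H_m(u_j)\le\int_{\Omega}(-\varphi)H_m(u)$. Combining the two bounds proves (\ref{17}).

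The step I expect to be the main obstacle is the weak convergence $H_m(u_j)\to H_m(u)$: it relies on the convergence in $\tilde{C}_m$-capacity of the decreasing sequence and on the fact that $u\in\mathcal{N}_m(H)\subset\mathcal{E}_m$ is a legitimate lower bound, so that Corollary \ref{00} is applicable. A secondary point requiring care is keeping the a priori finiteness $\int_{\Omega}(-\varphi)H_m(u)<+\infty$ available throughout the induction, so that Lemma \ref{18} can be invoked at every step; the rest is routine bookkeeping with mixed Hessian currents.
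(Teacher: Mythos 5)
Your proof is correct and follows essentially the same route as the paper: the upper bound $\int_{\Omega}(-\varphi)H_m(u_j)\le\int_{\Omega}(-\varphi)H_m(u)$ comes from (iterating) Lemma \ref{18}, and the matching lower bound comes from weak convergence of the Hessian measures together with lower semicontinuity of $-\varphi$, which is exactly the ``argument of Proposition \ref{29}'' the paper invokes. Your write-up is in fact more explicit than the paper's on two points it leaves implicit: the induction on mixed currents needed to pass from the single-factor statement of Lemma \ref{18} to the full Hessian, and the justification of the case $\int_{\Omega}(-\varphi)H_m(u)=+\infty$.
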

\begin{proof}
Let $\varphi \in \mathcal{SH}^{-}_m(\Omega)$ and  $u_j, u \in \mathcal{N}_m(H)$ such that $u\leq u_j.$ If $\displaystyle\int_{\Omega}(-\varphi)H_m(u)=+\infty,$ then    \;\;\;\;\;\;\;\;\;\;\;\;\;\;\;\;\;\; $\lim_{j\rightarrow+\infty}\displaystyle\int_{\Omega}(-\varphi)H_m(u_j)=+\infty$ and (\ref{17}) holds. Therefore we can assume that $\displaystyle\int_{\Omega}(-\varphi)H_m(u)<+\infty.$ Lemma \ref{18} implies that the sequence $\displaystyle{[\int_{\Omega}(-\varphi)H_m(u_j)]_j}$ is an increasing sequence  that is bounded from above by $\displaystyle\int_{\Omega}(-\varphi)H_m(u).$ So by  the same argument as in the proof of Proposition \ref{29}, the sequence
 $[(-\varphi)H_m(u_j)]_j$ converges weakly to $(-\varphi)H_m(u)$, and the desired limit of the total masses is valid.
\end{proof}
\begin{lem}\label{80}
Let $H \in \mathcal{E}_m$ and $u, v \in \mathcal{N}_m(H)$ such that $u\leq v.$ Then for all
$\omega_j \in \mathcal{SH}_m(\Omega)\cap L^{\infty}(\Omega),$ $-1\leq \omega_j\leq0,$ $j=1,...,m$, $\displaystyle\int_{\Omega}(-\omega_1)H_m(u)<+\infty$, we have that the following inequality holds:
\begin{equation}\label{20}
  \frac{1}{m!}\displaystyle\int_{\Omega}(v-u)^{m}dd^{c}\omega_{1}\wedge ...\wedge dd^{c}\omega_{m}\wedge \beta^{n-m}
 + \displaystyle\int_{\Omega}(-\omega_{1})H_m(v)\leq  \displaystyle\int_{\Omega}(-\omega_{1})H_m(u).
\end{equation}
\end{lem}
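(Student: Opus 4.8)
The plan is to reduce to the situation already covered by Proposition \ref{4}$(b)$ — two functions agreeing outside a compact set, where the global inequality is in hand — and then to let an approximation run to the limit.

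First I would use $u \in \mathcal{N}_m(H)$ to fix $\psi \in \mathcal{N}_m$ with $H+\psi \le u \le H$; let $[\Omega_j]$ be a fundamental sequence and $\psi^j$ the functions of Definition \ref{120}, so $\psi^j$ increases quasi-everywhere on $\Omega$ to $\widetilde{\psi}=0$. Put $v_j := \max(u,\psi^j+v)$. Just as in the proof of Lemma \ref{18}, one has $v_j \in \mathcal{E}_m$ (in fact $v_j \in \mathcal{N}_m(H)$, since $H+\psi \le v_j \le H$), $v_j = u$ on $\Omega\setminus\Omega_j$, $u \le v_j \le v$, and $[v_j]$ increases quasi-everywhere to $v$. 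Applying Proposition \ref{4}$(b)$ to the pair $(u,v_j)$ with $k=m$, $r=1$, and the auxiliary $m$-subharmonic functions $1+\omega_1,\dots,1+\omega_m$ (which take values in $[0,1]$, satisfy $dd^c(1+\omega_i)=dd^c\omega_i$, and $r-(1+\omega_1)=-\omega_1$), I get for each $j$
$$\frac{1}{m!}\int_{\Omega}(v_j-u)^m\, dd^c\omega_1\wedge\cdots\wedge dd^c\omega_m\wedge\beta^{n-m} + \int_{\Omega}(-\omega_1)H_m(v_j) \le \int_{\Omega}(-\omega_1)H_m(u). \qquad (\ast_j)$$

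It then remains to pass to the limit $j\to+\infty$ in $(\ast_j)$. Write $\sigma := dd^c\omega_1\wedge\cdots\wedge dd^c\omega_m\wedge\beta^{n-m}$. Since $\sigma$ does not charge $m$-polar sets (Theorem \ref{51}, as used in the proof of Theorem \ref{25}) and $0 \le (v_j-u)^m \uparrow (v-u)^m$ quasi-everywhere, the monotone convergence theorem gives $\int_\Omega (v_j-u)^m\sigma \uparrow \int_\Omega (v-u)^m\sigma$ — a finite limit, bounded by $m!\int_\Omega(-\omega_1)H_m(u)$ in view of $(\ast_j)$. For the Hessian term I would note that $0 \le v - v_j = \min(v-u,\,-\psi^j) \le -\psi^j$ and that $\psi^j \to 0$ in $\tilde{C}_m$-capacity: for $K\Subset\Omega$ and $\varepsilon>0$ the sets $\{\psi^j \le -\varepsilon\}\cap K$ are compact, decrease in $j$, and their intersection lies in the $m$-polar set $\{\lim_j\psi^j<0\}$, so their capacities tend to $0$. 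Hence $v_j \to v$ in $\tilde{C}_m$-capacity, and since $u \le v_j$ with $u \in \mathcal{E}_m$, Corollary \ref{00} yields $H_m(v_j) \to H_m(v)$ weakly.

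To finish, since $-\omega_1 \ge 0$ is lower semicontinuous, weak convergence of the positive measures $H_m(v_j)$ gives $\liminf_j \int_\Omega(-\omega_1)H_m(v_j) \ge \int_\Omega(-\omega_1)H_m(v)$. Combining this with $(\ast_j)$, rewritten as $\int_\Omega(-\omega_1)H_m(v_j) \le \int_\Omega(-\omega_1)H_m(u) - \frac{1}{m!}\int_\Omega(v_j-u)^m\sigma$, and letting $j\to\infty$ (the right-hand side converges by the preceding monotone convergence), one obtains precisely the asserted inequality. I expect the main obstacle to be this limiting step for the Hessian terms — justifying $v_j \to v$ in capacity and then turning the weak convergence of $H_m(v_j)$ into the one-sided mass bound above; by contrast, the reduction to Proposition \ref{4}$(b)$ and the treatment of the $(v_j-u)^m$-term are routine once $\sigma$ is known to ignore $m$-polar sets.
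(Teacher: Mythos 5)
Your argument is correct, but it takes a genuinely different route from the paper's. The paper argues in two steps: first it proves (\ref{20}) for $u,v\in\mathcal{E}^0_m(H)$ by setting $\widehat u=\max(u,v-\varepsilon)$, noting that $\widehat u=u$ off a compact set (because the $\mathcal{E}^0_m$-minorant of $u$ exceeds $-\varepsilon$ near $\partial\Omega$), applying Proposition \ref{4}$(b)$ and letting $\varepsilon\to 0$; it then treats general $u,v\in\mathcal{N}_m(H)$ by approximating \emph{both} functions from above by decreasing sequences in $\mathcal{E}^0_m(H)$ (Proposition \ref{0}) and invoking Proposition \ref{19} to pass to the limit in the weighted masses. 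You instead keep $u$ fixed and exhaust $v$ from below by $v_j=\max(u,\psi^j+v)$ --- the same device the paper uses in Lemma \ref{18} and Theorem \ref{25} --- which places you directly in the hypotheses of Proposition \ref{4}$(b)$ and lets you finish with a single limit passage: monotone convergence for the $\sigma$-term (legitimate since $\sigma\ll C_m$, as the paper itself asserts in the proof of Theorem \ref{25}) and capacity convergence plus Corollary \ref{00} and lower semicontinuity of $-\omega_1$ for the Hessian term. Your version is more self-contained (it bypasses Propositions \ref{0} and \ref{19}), while the paper's avoids having to prove $H_m(v_j)\to H_m(v)$ weakly at the cost of a two-parameter approximation. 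The one soft spot in your write-up is the justification that $\psi^j\to 0$ in $\tilde C_m$-capacity: the sets $\{\psi^j\le-\varepsilon\}\cap K$ need not be compact ($\psi^j$ is only upper semicontinuous), and continuity of the capacity along a decreasing sequence of compacts requires the Choquet property. The underlying fact --- an increasing sequence of $m$-sh functions converging quasi-everywhere to an $m$-sh limit converges in capacity --- is standard and is exactly what the paper invokes without proof in Lemma \ref{18}, so this does not affect the validity of your argument, but you should either cite that fact or give the Choquet-capacity justification properly.
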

\begin{proof}
\emph{\textbf{Step 1:}} Suppose first that $u, v \in \mathcal{E}^0_m(H).$ Then by definition there exists a  function $\varphi \in \mathcal{E}^0_m$ such that
$H\geq u\geq  H+\varphi.$ For each $\varepsilon>0$ small  enough we can choose $K\Subset\Omega$ such that $\varphi\geq -\varepsilon$ on $\Omega\setminus K$. Hence,
                $$u\geq\varphi+H\geq-\varepsilon+H\geq-\varepsilon+v \;\;\;\hbox{on}\;\;\; \Omega\setminus K,$$
Put $\widehat{u}=\max(u,v-\varepsilon)$, then $\widehat{u}=u$ on $\Omega\setminus K.$ BAy $(b)$ in Proposition \ref{4} we have
\begin{equation*}
\frac{1}{m!}\displaystyle\int_{\Omega}(\widehat{u}-u)^{m}dd^{c}\omega_{1}\wedge ...\wedge dd^{c}\omega_{m}\wedge \beta^{n-m}
+ \displaystyle\int_{\Omega}(-\omega_{1})H_m(\widehat{u})\leq \displaystyle\int_{\Omega}(-\omega_{1})H_m(u).
\end{equation*}
By letting $\varepsilon\rightarrow 0^+$ (\ref{20}) holds.
\par \emph{\textbf{Step 2:}} Let now $u, v \in \mathcal{N}_m(H),$ then by Proposition \ref{0} there exist two decreasing sequences $[u_j]$, $[v_k]$ such that $u_k, v_j \in\mathcal{E}^0_m(H)$ that converge pointwise to $u$ and $v$ respectively, so using the first part we get
 \begin{equation*}
\frac{1}{m!}\int_{\Omega}(v_k-u_j)^{m}dd^{c}\omega_{1}\wedge ...\wedge dd^{c}\omega_{m}\wedge \beta^{n-m}
 + \int_{\Omega}(-\omega_{1})H_m(v_k)\leq \int_{\Omega}(-\omega_{1})H_m(u_j).
\end{equation*}
Finally by Proposition \ref{19} we get the desired inequality.
\end{proof}
An immediate consequence of Lemma \ref{80} is the following identity principle for the class $\mathcal{N}_{m}(H),$ Theorem \ref{31} will play a crucial role in this paper.
\begin{thm}\label{31}(The Identity Principle).
Let $H \in \mathcal{E}_m.$ If $u,v\in\mathcal{N}_m(H)$ such that $u\leq v$, $H_m(u)=H_m(v)$ and $\displaystyle\int_{\Omega}(-\omega)H_m(u)<+\infty$ with $\omega \in\mathcal{E}_m$, then $u=v$ on $\Omega$.
\end{thm}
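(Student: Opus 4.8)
The plan is to exploit Lemma \ref{80} directly, choosing the test functions $\omega_j$ so that the remainder terms collapse. First I would like to apply Lemma \ref{80} to the pair $u\le v$ in $\mathcal{N}_m(H)$; however Lemma \ref{80} requires the test functions $\omega_j$ to be \emph{bounded} $m$-sh functions with $-1\le\omega_j\le 0$, whereas the natural candidate $\omega$ coming from the hypothesis $\int_\Omega(-\omega)H_m(u)<+\infty$ lies only in $\mathcal{E}_m$ and need not be bounded. So the first step is an approximation step: pick a decreasing sequence $[\omega^{(s)}]\subset\mathcal{E}_m^0$ (or at least in $\mathcal{SH}_m(\Omega)\cap L^\infty(\Omega)$, normalised so that $-1\le\omega^{(s)}\le 0$ after scaling) with $\omega^{(s)}\downarrow\omega$, for instance using Proposition \ref{0} or \cite[Theorem 3.1]{Chi2}. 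For each fixed $s$, apply Lemma \ref{80} with $\omega_1=\cdots=\omega_m=\omega^{(s)}$ (all equal), which is legitimate since $\int_\Omega(-\omega^{(s)})H_m(u)\le\int_\Omega(-\omega)H_m(u)<+\infty$ by monotonicity of the integrand. This gives
\[
\frac{1}{m!}\int_\Omega (v-u)^m (dd^c\omega^{(s)})^m\wedge\beta^{n-m}+\int_\Omega(-\omega^{(s)})H_m(v)\le\int_\Omega(-\omega^{(s)})H_m(u).
\]

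The second step uses the hypothesis $H_m(u)=H_m(v)$: the two integrals on the right and the second integral on the left are \emph{equal} (both finite, dominated by $\int_\Omega(-\omega)H_m(u)<+\infty$), so they cancel, leaving
\[
\frac{1}{m!}\int_\Omega (v-u)^m (dd^c\omega^{(s)})^m\wedge\beta^{n-m}\le 0.
\]
Since $v-u\ge 0$ and $(dd^c\omega^{(s)})^m\wedge\beta^{n-m}=H_m(\omega^{(s)})$ is a non-negative measure, the integrand is non-negative, hence the integral is zero; thus $(v-u)^m=0$ holds $H_m(\omega^{(s)})$-almost everywhere on $\Omega$, i.e. $u=v$ outside a set of $H_m(\omega^{(s)})$-measure zero.

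The third and final step is to let $s\to+\infty$ and upgrade this to $u=v$ quasi-everywhere, hence everywhere. Here I would invoke the standard fact (implicit in the capacity subsection and in Lemma \ref{28}) that if $\omega\in\mathcal{E}_m^0$ then $H_m(\omega)$ puts no mass on $m$-polar sets and, more to the point, that the set $\{u<v\}$ — being the increasing union over $s$ (or a suitable exhaustion) of sets on which we control the Hessian mass of $\omega^{(s)}$ — must be $m$-polar: on the Borel set $\{u<v\}$ we have just shown $\int_{\{u<v\}}H_m(\omega^{(s)})=0$ for every $s$, and since $\omega^{(s)}\downarrow\omega$ with $\omega\in\mathcal{E}_m$ exhausting, any non-$m$-polar Borel set carries positive $C_m$-capacity and hence positive $H_m(\omega^{(s)})$-mass for $s$ large (because $\omega^{(s)}$ can be taken comparable to a capacity extremal function on compact pieces). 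Therefore $\{u<v\}$ is $m$-polar; since $u,v\in\mathcal{E}_m$ are $m$-sh and agree outside an $m$-polar set, and $u\le v$, the maximality/continuity properties of $m$-sh functions force $u=v$ on all of $\Omega$. I expect the main obstacle to be exactly this last step: making rigorous the claim that vanishing of $\int_{\{u<v\}}H_m(\omega^{(s)})$ for all $s$ forces $\{u<v\}$ to be $m$-polar. If a cleaner route is available, one can instead run Lemma \ref{80} with the $\omega_j$'s chosen as (truncations of) the relative extremal function $u_{E,\Omega}^*$ of a compact $E\subset\{u<v\}$, directly yielding $C_m(E)=0$ for every such $E$ and hence $\{u<v\}$ $m$-polar, then conclude as above.
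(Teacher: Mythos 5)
The paper itself offers no argument beyond the phrase ``an immediate consequence of Lemma \ref{80}'', so the whole content of a proof lies in making that deduction precise; your Steps 1--2 do this correctly (the approximation $\omega^{(s)}\downarrow\omega$ with $\omega^{(s)}\geq\omega$ bounded, the finiteness $\int(-\omega^{(s)})H_m(u)\leq\int(-\omega)H_m(u)<+\infty$, and the cancellation using $H_m(u)=H_m(v)$ are all fine). The genuine gap is exactly where you suspect it: from $\int_\Omega(v-u)^mH_m(\omega^{(s)})=0$ for one particular sequence $\omega^{(s)}\downarrow\omega$ you cannot conclude that $\{u<v\}$ is $m$-polar. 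Nothing forces the measures $H_m(\omega^{(s)})$ to charge $\{u<v\}$; they could all be carried by a fixed small set disjoint from it (think of $\omega$ with Hessian measure concentrated near a point), and then the conclusion ``$u=v$ a.e.\ $H_m(\omega^{(s)})$'' is vacuous on most of $\Omega$. Your heuristic that ``$\omega^{(s)}$ can be taken comparable to a capacity extremal function on compact pieces'' is not available: the sequence is tied to $\omega$, not to the set $\{u<v\}$.

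Your fallback via relative extremal functions is the correct route, but as stated it skips the one verification where the hypothesis $\int_\Omega(-\omega)H_m(u)<+\infty$ actually does work. Lemma \ref{80} requires $\int_\Omega(-\omega_1)H_m(u)<+\infty$ for the chosen $\omega_1$, and for $\omega_1=h_K^*:=u^*_{m,K}$ (the relative extremal function of a compact $K\Subset\Omega$) this is not automatic, since $\int_\Omega H_m(u)$ may be infinite for $u\in\mathcal{N}_m(H)$. The point is: $\omega\not\equiv0$ is negative $m$-sh, hence $\omega<0$ on $\Omega$ and $c_K:=-\sup_K\omega>0$; then $\omega/c_K\leq-1$ on $K$, so $\omega/c_K\leq h_K^*$ by definition of $h_K^*$, whence
\[
\int_\Omega(-h_K^*)\,H_m(u)\;\leq\;\frac{1}{c_K}\int_\Omega(-\omega)\,H_m(u)<+\infty .
\]
Now Lemma \ref{80} with $\omega_1=\cdots=\omega_m=h_K^*$ gives $\int_\Omega(v-u)^mH_m(h_K^*)=0$; taking $K$ compact inside $\{v-u>\varepsilon\}$ and using $\int_\Omega H_m(h_K^*)=C_m(K)$ with $H_m(h_K^*)$ supported in $K$, you get $\varepsilon^m C_m(K)=0$, hence $C_m(\{v-u>\varepsilon\})=0$ by inner regularity and $\{u<v\}=\bigcup_{\varepsilon>0}\{v-u>\varepsilon\}$ is $m$-polar. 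Finally, the passage from ``$\{u<v\}$ is $m$-polar'' to ``$u=v$ everywhere'' is not a maximality statement: an $m$-polar set has Lebesgue measure zero, so $u=v$ a.e.\ $dV$ with $u\leq v$, and two subharmonic functions equal a.e.\ $dV$ coincide everywhere by the sub-mean-value property. With these two repairs the argument closes.
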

\section{The Dirichlet problem in $\mathcal{N}_m(H),$ $H\in\mathcal{MSH}_{m}(\Omega)$}\label{77772}
In this section, we formulate one of our main results, Theorem \ref{52}. We shall first study the Dirichlet problem with continuous boudary data: 
\begin{equation}\label{1003}
\left\{
     \begin{array}{ll}
        u\in\mathcal{SH}_m(\Omega)\cap L^{\infty}_{loc}(\Omega),\\
        H_m(u)=d\mu,\\
        \displaystyle\limsup_{z\rightarrow\xi }u(z)=H(\xi), \;\;\;\; \forall\xi\in \partial\Omega.
     \end{array}
   \right.
\end{equation}
Where $\Omega$ is a bounded $m$-pseudoconvex domain, $H\in\mathcal{MSH}_{m}(\Omega)\cap C(\overline{\Omega})$ and
$\mu$ is a positive measure on $\Omega$. Suppose that the class of the class of $m$-subharmonic functions 
$$
\mathcal{B}_m(\mu,H):=\{v\in\mathcal{SH}_m(\Omega)\cap L^{\infty}_{loc}(\Omega): H_m(v)\geq\mu, \limsup_{z\rightarrow\xi}v(z)\leq H(\xi),\forall \xi\in\partial\Omega\}
$$
is non-empty. Then 
$$U_m(\mu, H)=\sup\{v: v\in\mathcal{B}_m(\mu,H)\}\in \mathcal{B}_m(\mu,H).$$
Indeed, it follows from Choquet's lemma that there exists $\varphi_j\in\mathcal{B}_m(\mu,H)$ for $j\in\mathbb{N}$ such that $(\sup_{j\in\mathbb{N}}\varphi_j)^{*}=U^{*}_m(\mu,H)$, where $\theta^{*}$ denotes the smallest upper semicontinuous majorant of $\theta.$ Thus, $U^{*}_m(\mu,H)\in\mathcal{SH}_m(\Omega),$ and since $\varphi_j\leq H$, (because of the $m$-maximality of $H$), then it follows that $\limsup_{z\rightarrow\xi}U^{*}_m(\mu,H)\leq H(\xi)$, for all $\xi\in \partial\Omega$. On the other hand, it follows from \cite{Chi1}, Theorem 1.3.16] that $H_m(\max_{1\leq j\leq k}\varphi_{j})\geq \mu,$ and since $\max_{1\leq j\leq k}(\varphi_{j})\nearrow U^{*}_m(\mu,H)$ a.e as $k\rightarrow+\infty$, then by [\cite{Chi1}, Theorem 1.3.10] we have $H_m(U^{*}_m(\mu,H))\geq \mu,$ so $U^{*}_m(\mu,H)\in\mathcal{B}_m(\mu,H)$ and the proof is complete.
\subsection{The Dirichlet problem with smooth boundary data}
\par We assume in this part that $\Omega$ is smoothly bounded strictly $m$-pseudoconvex and that $H\in\mathcal{MSH}_{m}(\Omega)\cap C^{\infty}(\partial\Omega)$. Note that in this case, It follows from \cite{Li} That $U_m(dV,H),U_m(dV,-H)\in\mathcal{SH}_{m}(\Omega)\cap C^{\infty}(\overline{\Omega})$, so, by \cite[Theorem 3.22]{Chi2} 
$$\int_{\Omega}H_m(H+U_m(0,-H))\leq \int_{\Omega}H_m(U_m(dV,H)+U_m(dV,-H))<+\infty,$$
Let denote by $H^{-}:=U_m(0,-H)$. We have $H+H^{-}\in\mathcal{E}_m^{0}$, and if $\varphi\in\mathcal{E}_m^{0}$, such that $\mu\leq H_m(\varphi)$, then 
$$\int_{\Omega}H_m(U_m(\mu,H))\leq \int_{\Omega}H_m(\varphi+H)\leq \int_{\Omega}H_m(\varphi+H+H^{-})<+\infty,$$
since $\mathcal{E}_m^{0}$ is a convex cone. Thus, if $\mu\leq H_m(\varphi)$, where $\varphi\in\mathcal{E}_m^{0}$,  then we have $H\geq U_m(\mu,H)\geq \varphi+H$, namely $U_m(\mu,H)\in \mathcal{E}_m^{0}(H).$ Furtheremore, by \cite[Theorem 3.9]{Chi2} $U_m(\mu,H)+H^{-}\in \mathcal{E}_m^{0}$.
\begin{lem}\label{55}
Let $\mu$ be a positive and compactly supported measure in $\Omega$ and let $A>0$ a conctant such that
\begin{equation}\label{1002}
\int(-\varphi)^pd\mu\leq A\left(\int(-\varphi)^pH_m(\varphi)\right)^{\frac{p}{p+m}},\;\; \forall\varphi\in\mathcal{E}_m^0
\end{equation}
with $p>\frac{m}{m-1}$, and let $u_j \in \mathcal{E}^0_m\cap C(\overline{\Omega})$. If $u_j$ converges to a function $u\in \mathcal{SH}_m(\Omega)$, as $j$  tends to $+\infty$, a.e. $dV$ and if $\displaystyle\sup_s\int_\Omega H_m(u_s)<+\infty,$ then $\displaystyle\lim_{s\rightarrow+\infty}\int_{\Omega}u_sd\mu=\displaystyle\int_{\Omega}ud\mu.$
\end{lem}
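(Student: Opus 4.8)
The plan is to prove the equivalent statement $\int_\Omega(-u_s)\,d\mu\to\int_\Omega(-u)\,d\mu$ (both integrands are $\ge0$). The point is that $(\ref{1002})$ together with $p>\tfrac{m}{m-1}$ makes the family $\{-u_s\}_s$ uniformly $\mu$-integrable, so the problem reduces to a uniformly bounded situation, which is then handled by convergence in capacity.

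First I would establish a uniform tail estimate. For $t>0$ the function $\max(u_s,-t)$ lies in $\mathcal{E}^0_m$, is bounded below by $-t$, and satisfies $\int_\Omega H_m(\max(u_s,-t))\le\int_\Omega H_m(u_s)\le M:=\sup_s\int_\Omega H_m(u_s)$, while $\{u_s<-t\}\subset\{-\max(u_s,-t)=t\}$. Applying $(\ref{1002})$ to $\varphi=\max(u_s,-t)$,
\[
t^{p}\,\mu(\{u_s<-t\})\le\int_\Omega(-\max(u_s,-t))^{p}\,d\mu\le A\Big(\int_\Omega(-\max(u_s,-t))^{p}H_m(\max(u_s,-t))\Big)^{\frac{p}{p+m}}\le A\,(t^{p}M)^{\frac{p}{p+m}},
\]
hence $\mu(\{u_s<-t\})\le C\,t^{-\alpha}$ with $C=A\,M^{\frac{p}{p+m}}$, $\alpha=\tfrac{pm}{p+m}$, and $\alpha>1$ exactly because $p>\tfrac{m}{m-1}$; everything is uniform in $s$. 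Then $\int_{\{-u_s>R\}}(-u_s)\,d\mu=R\,\mu(\{-u_s>R\})+\int_R^{\infty}\mu(\{-u_s>t\})\,dt\le\big(1+\tfrac1{\alpha-1}\big)C\,R^{1-\alpha}\to0$ uniformly in $s$, giving uniform $\mu$-integrability of $\{-u_s\}_s$ (and, by letting $t\to\infty$ along a $\varphi\in\mathcal{E}^0_m$ that is $-\infty$ on a prescribed $m$-polar set, that $\mu$ charges no $m$-polar set).

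Next I would reduce to the bounded case and record that $\mu$ is dominated by capacity. The tail estimate gives $\big|\int_\Omega(-u_s)\,d\mu-\int_\Omega(-\max(u_s,-R))\,d\mu\big|\le\int_{\{-u_s>R\}}(-u_s)\,d\mu\le\varepsilon_R$ with $\varepsilon_R\to0$ uniformly in $s$, while $\int_\Omega(-\max(u,-R))\,d\mu\nearrow\int_\Omega(-u)\,d\mu$ by monotone convergence; so it suffices to prove, for each fixed $R$, that $\int_\Omega v_s\,d\mu\to\int_\Omega v\,d\mu$, where $v_s:=\max(u_s,-R)\in\mathcal{E}^0_m\cap C(\overline\Omega)$, $v:=\max(u,-R)\in\mathcal{SH}_m(\Omega)$, $-R\le v_s,v\le0$, $v_s\to v$ a.e. $dV$, and $\int_\Omega H_m(v_s)\le M$. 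Moreover, for any compact $E\Subset\Omega$ the regularized relative extremal function $w_E$ belongs to $\mathcal{E}^0_m$, equals $-1$ q.e. on $E$, and satisfies $\int_\Omega H_m(w_E)=C_m(E)$; since $\mu$ vanishes on $m$-polar sets and $0\le-w_E\le1$, $(\ref{1002})$ yields $\mu(E)=\int_E(-w_E)^{p}\,d\mu\le A\,C_m(E)^{\frac{p}{p+m}}$.

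Finally, the bounded case. Since the $v_s$ are uniformly bounded and $v_s\to v$ in $L^1_{loc}$, they converge to $v$ in $C_m$-capacity on compact subsets of $\Omega$; as $\mathrm{supp}\,\mu\Subset\Omega$, picking a compact $K$ containing $\mathrm{supp}\,\mu$ in its interior and using the previous step gives $\mu(\{|v_s-v|>\eta\})\le A\,C_m(\{z\in K:|v_s-v|>\eta\})^{\frac{p}{p+m}}\to0$ for every $\eta>0$, i.e. $v_s\to v$ in $\mu$-measure; since $|v_s|\le R$ and $\mu(\Omega)<\infty$, dominated convergence gives $\int_\Omega v_s\,d\mu\to\int_\Omega v\,d\mu$. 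Together with the reduction above (and noting that if $\int_\Omega(-u)\,d\mu=+\infty$ the reduction already forces $\int_\Omega(-u_s)\,d\mu\to+\infty$), this finishes the proof. I expect the convergence-in-capacity step for $\{v_s\}$ to be the main obstacle: the "upward'' part $\{v_s>v+\eta\}$ is easily controlled by the decreasing $m$-sh envelopes $(\sup_{k\ge s}v_k)^{*}\searrow v$, but the "downward'' part $\{v_s<v-\eta\}$, controlled by the increasing envelopes $P[\inf_{k\ge s}v_k]\nearrow v$, is delicate — the identity of the limit of these envelopes with $v$ ultimately rests on a barrier argument (an $m$-sh function staying $\le v-\eta$ on a set of positive capacity cannot be $L^1_{loc}$-close to $v$), or on invoking the standard fact that locally uniformly bounded $m$-sh functions converging in $L^1_{loc}$ converge in capacity on compact subsets.
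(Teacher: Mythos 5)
Your first two steps are sound and essentially reproduce the paper's strategy: feeding the truncation $\varphi=\max(u_s,-t)$ into (\ref{1002}) gives $\mu(\{u_s<-t\})\le C\,t^{-pm/(p+m)}$ with exponent $pm/(p+m)>1$ precisely because $p>m/(m-1)$, hence uniform $\mu$-integrability and the reduction to uniformly bounded $v_s$ (the paper obtains the same tail bound via the relative extremal function of $\{u_s<-N\}\cap\mathrm{supp}\,\mu$ and the comparison principle, but your truncation version is equivalent). The gap is exactly where you flagged it: the ``standard fact'' that locally uniformly bounded $m$-sh functions converging a.e.\ $dV$ (equivalently in $L^1_{loc}$) converge in $C_m$-capacity on compacts is \emph{false}. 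Consider $u_j(z)=\max\bigl(\tfrac1j\log|\sin(jz_1)|,-1\bigr)$: these are uniformly bounded, have uniformly bounded Hessian mass, and converge a.e.\ and in $L^1_{loc}$ to $u=|\mathrm{Im}\,z_1|$, yet $\{|u_j-u|>1/4\}$ contains $\{|\sin(jz_1)|<e^{-j/2}\}$, a union of roughly $j$ polydiscs of radius about $e^{-j/2}$ centered at the real zeros $k\pi/j$; each has capacity of order $1/j$ and they are widely separated relative to their radii, so a direct competitor (namely $\max(-1,c_j\log|\sin(jz_1)|)$ suitably normalized, whose Hessian mass of order $1$ sits entirely inside that union) shows the union has capacity bounded below uniformly in $j$. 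Consequently neither of your proposed repairs works: the increasing envelopes $P[\inf_{k\ge s}v_k]$ need not converge to $v$ outside an $m$-polar set (if they did, capacity convergence would follow, contradicting the example), and your barrier argument only identifies the envelope limit with $v$ a.e.\ $dV$, which is not enough to control $\{v_s<v-\eta\}$ in capacity.

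The paper closes the bounded case by a genuinely different argument which you would need to substitute. The bounded sequence $\{v_s\}$ is bounded in $L^2(d\mu)$, so by Banach--Saks a subsequence has Ces\`aro means $f_q$ converging in $L^2(d\mu)$ and a.e.\ $d\mu$ to some $v\in L^2(d\mu)$; the decreasing regularized envelopes $(\sup_{r\ge q}f_r)^{*}$ converge everywhere to $u$; since $\mu$ satisfies (\ref{1002}) it charges no $m$-polar set, so the regularization does not change the $\mu$-integrals, and monotone convergence identifies $\int u\,d\mu=\int v\,d\mu=\lim\int v_s\,d\mu$. Your capacity-domination inequality $\mu(E)\le A\,C_m(E)^{p/(p+m)}$ is correct but not by itself strong enough to make the exceptional sets in the counterexample $\mu$-negligible, so the detour through convergence in capacity cannot be made to work as stated.
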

\begin{proof}
Since $\displaystyle\limsup_{s\rightarrow+\infty}\int u_sd\mu\leq \displaystyle\int ud\mu$, so it is enough to prove that  $\displaystyle\liminf_{s\rightarrow+\infty}\int u_sd\mu\geq \displaystyle\int ud\mu.$
For each $N\in \mathbb{N}$ set $\Gamma^s_N=\{u_s<-N\}\cap\hbox{supp}\mu$ and denote by $U_{m,\Gamma_N^s}$ the relative $m$-extremal function of this set, namely 
$$U_{m,\Gamma_N^s}=\sup\{\varphi\in\mathcal{SH}_m(\Omega)\;/\;\varphi\leq0,\;\hbox{and}\; \varphi\leq -1 \;\hbox{in}\;\Gamma_N^s \}$$
 By (\ref{1002}) we have
$$
\int_{\Gamma^s_N}d\mu = A\left(\int_{\Gamma^s_N}H_m(U_{m,\Gamma_N^s})\right)^{\frac{p}{p+m}}.
$$
and thanks  to \cite[Theorem 5.2]{Chi2} we have
$$
\int_{\Omega}H_m(U_{m,\Gamma_N^s})= \int_{\overline{\Gamma}_N^s}H_m(U_{m,\Gamma_N^s})
   \leq\int_{\{\frac{2u_s}{N}<U_{m,\Gamma_N^s}\}}H_m(U_{m,\Gamma_N^s})
   \leq \left(\frac{2}{N}\right)^m\int H_m(u_s)\leq  \left(\frac{2}{N}\right)^m\alpha,
$$
where $\alpha=\sup\int(dd^cu_s)^m\wedge \beta^{n-m}.$ Hence
$$\int_{\Gamma^s_{2^N}}d\mu\leq A\left(2\alpha\right)^{\frac{p}{m+p}}\frac{1}{N^{mp/(m+p)}}.
$$
Since $p>\frac{m}{m-1},$ then $\delta:=mp/(m+p)>1,$ Therfore
$$\int_{\Gamma^s_{2^N}}(-u_s)d\mu=\sum_{j=N}^{\infty}\int_{\{-2^{k+1}\leq u_j\leq -2^{k}\}}\leq A\left(2\alpha\right)^{\frac{p}{m+p}}\sum^{\infty}_{j=N}\frac{2^{j+1}}{2^{k\delta}}\rightarrow0,\;\; N\rightarrow\infty.$$

Hence, $\displaystyle\lim_{N\rightarrow+\infty}\;\sup_j\int_{\Gamma^s_{2^N}}(-u_s)d\mu=0$ and 
\begin{equation}\label{1011}
\int_{\Omega}(-u_s)d\mu = \int_{\{u_s\geq -2^N\}}(-u_s)d\mu+\int_{\Gamma^s_{2^{N}}}(-u_s)d\mu\leq 2^N\int_{\Omega} d\mu+A\left(2\alpha\right)^{\frac{p}{m+p}}\sum^{\infty}_{j=N}\frac{2^{j+1}}{2^{k\delta}}<+\infty.
\end{equation}
In particular, $\displaystyle\sup_s\int_{\Omega}-u_sd\mu<+\infty$, so it is enough to prove that
$\displaystyle\int(-\max\{u_s,-N\})d\mu \rightarrow \displaystyle\int-\max\{u,-N\}d\mu,$
or we can assume that $[u_j]$ is uniformly bounded. Then the sequence is also bounded in $L^{2}(d\mu)$ and passing to a subsequence one can find $v\in L^{2}(d\mu)$ and a subsequence $u_{s_{t}}$ so that $(1\diagup M)\sum^{M}_{t=1}u_{s_{t}}\rightarrow v$ in $L^{2}(d\mu)$. Furthermore there exists a subsequence $M_q$ such that $f_q=(1/M_q)\sum^{M_q}_{t=1}u_{s_{t}}\rightarrow v$ a.e $d\mu$. Since $f_q$ converges in $L^{2}(dV),$  then $\displaystyle(\sup_{r\geq q}f_r)^\ast\searrow u$ everywhere, and
$\displaystyle\int\displaystyle(\sup_{r\geq q}f_r)^\ast d\mu=\displaystyle\int\sup_{r\geq q}f_rd\mu\longrightarrow \int vd\mu \;\;as\; q\longrightarrow+\infty.$
Now since $\mu$ puts no mass on an $m$-polar sets and $f_r\longrightarrow v$ a.e. $d\mu$, then  $\displaystyle\int ud\mu=\displaystyle\int vd\mu=\displaystyle\lim\int u_{s_{t}}d\mu.$
\end{proof}
\begin{lem}\label{1006}
Suppose that $\mu$ is a positive measure with compact support in $\Omega$ such that $\mu$ satisfies (\ref{1002}) for some $p>m/(m-1)$. Assume that $[u_j]\subset \mathcal{E}^{0}_{m}(H)\cap C(\overline{\Omega})$, with $H\in\mathcal{MSH}_m(\Omega)\cap C^{\infty}(\partial\Omega)$, and $u_j\rightarrow u \in\mathcal{SH}_m(\Omega)$ $a.e.\; dV$. $j\rightarrow+\infty$ and that $\displaystyle\sup_j\int_{\Omega}H_m(u_j)<+\infty$. Then $\displaystyle\lim_{j\rightarrow+\infty}\int u_jd\mu=\int ud\mu.$
\end{lem}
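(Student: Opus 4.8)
The plan is to reduce the statement to Lemma \ref{55} by translating the $u_j$ into the class $\mathcal{E}_m^{0}$ by means of the maximal function $H^{-}:=U_m(0,-H)$. Recall from the discussion preceding Lemma \ref{55} that $H^{-}\in\mathcal{SH}_m(\Omega)\cap C^{\infty}(\overline{\Omega})$, that $H^{-}$ is $m$-maximal (being the $m$-maximal majorant of $-H$), so that $H_m(H^{-})=0$, that $H+H^{-}\in\mathcal{E}_m^{0}$, and that $H^{-}=-H$ on $\partial\Omega$. Put $w_j:=u_j+H^{-}$ and $w:=u+H^{-}$; since $u_j\to u$ a.e. $dV$ we also have $w_j\to w$ a.e. $dV$, with $w\in\mathcal{SH}_m(\Omega)$. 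I would first check that $w_j\in\mathcal{E}_m^{0}\cap C(\overline{\Omega})$: it is $m$-sh as a sum of $m$-sh functions and continuous on $\overline{\Omega}$ since $u_j\in C(\overline{\Omega})$ and $H^{-}\in C^{\infty}(\overline{\Omega})$; if $\varphi_j\in\mathcal{E}_m^{0}$ satisfies $H+\varphi_j\le u_j\le H$, then $(H+H^{-})+\varphi_j\le w_j\le H+H^{-}\le 0$ with both bounds in the convex cone $\mathcal{E}_m^{0}$, so $\lim_{z\to\xi}w_j(z)=H(\xi)-H(\xi)=0$ for every $\xi\in\partial\Omega$ and, by the comparison principle exactly as in the paragraph before Lemma \ref{55}, $\int_{\Omega}H_m(w_j)\le\int_{\Omega}H_m\big((H+H^{-})+\varphi_j\big)<+\infty$.

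The crux is the uniform bound $\sup_j\int_{\Omega}H_m(w_j)<+\infty$, which is exactly what Lemma \ref{55} requires of the sequence $(w_j)$. Expanding, $H_m(w_j)=\sum_{k=0}^{m}\binom{m}{k}(dd^cu_j)^{k}\wedge(dd^cH^{-})^{m-k}\wedge\beta^{n-m}$; the term $k=m$ is $H_m(u_j)$, bounded by hypothesis, and the term $k=0$ is $H_m(H^{-})=0$ because $H^{-}$ is $m$-maximal. For a mixed term with $1\le k\le m-1$ I would use that $H^{-}\in C^{\infty}(\overline{\Omega})$ has bounded complex Hessian, so that $C\beta\pm dd^cH^{-}$ are positive (hence $m$-positive) $(1,1)$-forms for $C$ large; telescoping, together with the nonnegativity of a wedge of at most $m$ $m$-positive $(1,1)$-forms with $\beta^{n-m}$, yields $(dd^cu_j)^{k}\wedge(dd^cH^{-})^{m-k}\wedge\beta^{n-m}\le C^{m-k}\,(dd^cu_j)^{k}\wedge\beta^{n-k}$, and one then controls $\int_{\Omega}(dd^cu_j)^{k}\wedge\beta^{n-k}$ by $\int_{\Omega}H_m(u_j)$ through the mixed Hessian inequality of Corollary \ref{570}, a fixed reference function from $\mathcal{E}_m^{0}$ filling the remaining $m-k$ slots.

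I expect this last estimate to be the real obstacle, for a subtle reason: Corollary \ref{570} is stated for functions of $\mathcal{F}_m$, while a function of $\mathcal{E}_m^{0}(H)$ need not belong to $\mathcal{F}_m$ when $H\not\equiv 0$ (its boundary values are those of $H$). There are two ways around it. Either carry out the preceding computation with $w_j$ in place of $u_j$, using that $w_j\in\mathcal{E}_m^{0}\subset\mathcal{F}_m$ together with $dd^cu_j=dd^cw_j-dd^cH^{-}$ and $H_m(H^{-})=0$; or, more directly, reinspect the proof of Lemma \ref{55} and observe that it uses only $u_j\le 0$ (here $u_j\le H\le 0$, as $H\in\mathcal{E}_m$), $u_j\in C(\overline{\Omega})$, $\sup_j\int_{\Omega}H_m(u_j)<+\infty$, and $\operatorname{supp}\mu\Subset\Omega$ — the assumption $u_j\in\mathcal{E}_m^{0}$ being used there only to make $\{u_j<-N\}$ relatively compact, which is automatic since $\{u_j<-N\}\cap\operatorname{supp}\mu\Subset\Omega$ in any case, while the extremal-function comparison step localises because $H_m$ of a relative extremal function is carried by the relatively compact set in question. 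Either way, once $\sup_j\int_{\Omega}H_m(w_j)<+\infty$ is established, Lemma \ref{55} gives $\int_{\Omega}w_j\,d\mu\to\int_{\Omega}w\,d\mu$; subtracting the finite number $\int_{\Omega}H^{-}\,d\mu$ ($H^{-}$ bounded, $\mu(\Omega)<+\infty$) gives $\int_{\Omega}u_j\,d\mu\to\int_{\Omega}u\,d\mu$.
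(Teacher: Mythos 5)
Your overall reduction is exactly the paper's: set $w_j:=u_j+H^{-}$ with $H^{-}=U_m(0,-H)$, verify $w_j\in\mathcal{E}_m^{0}\cap C(\overline{\Omega})$ with $w_j\to w=u+H^{-}$ a.e.\ $dV$, establish $\sup_j\int_{\Omega}H_m(w_j)<+\infty$, invoke Lemma \ref{55}, and subtract $\int_{\Omega}H^{-}d\mu$. The first and last steps are fine. The problem is that the crux --- the \emph{uniform} bound on $\int_{\Omega}H_m(w_j)$ --- is precisely what you leave open, and neither of your two proposed routes closes it. For the multinomial route: since $u_j\notin\mathcal{F}_m$, Corollary \ref{570} cannot be applied to $(dd^cu_j)^{k}\wedge\cdots$, and substituting $w_j$ for $u_j$ does not escape the difficulty, because each mixed term then gets estimated by powers $\bigl(\int_{\Omega}H_m(w_j)\bigr)^{l/m}$, $l<m$, of the very quantity being bounded; one ends up with $\int H_m(w_j)\leq \sup_j\int H_m(u_j)+\sum_{l<m}c_l\bigl(\int H_m(w_j)\bigr)^{l/m}$. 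This circularity is salvageable (the exponents are $<1$ and each $\int H_m(w_j)$ is finite a priori, so a sublinear bootstrap yields a uniform bound), but you do not notice it; moreover the ``fixed reference function from $\mathcal{E}_m^{0}$'' must satisfy $dd^c\rho\geq\beta$ on all of $\Omega$ to convert $\beta^{m-k}$ into $(dd^c\rho)^{m-k}$ --- an arbitrary element of $\mathcal{E}_m^{0}$ will not do; one needs the smooth strictly $m$-sh exhaustion of \cite[Theorem 4.1]{ACH}.

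Your second workaround --- that the proof of Lemma \ref{55} only needs $u_j\leq0$ --- is false. The step $\int_{\{\frac{2u_s}{N}<U_{m,\Gamma_N^s}\}}H_m(U_{m,\Gamma_N^s})\leq(2/N)^m\int H_m(u_s)$ there is a comparison principle requiring $\liminf_{z\to\partial\Omega}\bigl(\tfrac{2}{N}u_s-U_{m,\Gamma_N^s}\bigr)\geq0$; for $u_s\in\mathcal{E}_m^{0}(H)$ this boundary limit is $2H/N$, which is negative wherever $H<0$ on $\partial\Omega$, so the sublevel set is not relatively compact and the estimate breaks down. The zero boundary values of $\mathcal{E}_m^{0}$ are essential --- this is exactly why the translation by $H^{-}$ is needed at all. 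For comparison, the paper obtains the uniform bound by a different device: it replaces the uncontrolled $\varphi_j$ by $\varphi_j:=U_m(H_m(u_j),0)$ (so that $\int H_m(\varphi_j)=\int H_m(u_j)$ is bounded), uses the comparison principle to get $w_j\geq\upsilon_j:=\varphi_j+H+H^{-}$, hence $\int H_m(w_j)\leq\int H_m(\upsilon_j)$, and then bounds $\int H_m(\upsilon_j)$ by $C_{\delta,m}\bigl[\int H_m(u_j)+\int H_m(H+H^{-})\bigr]$ via comparison estimates on a three-set partition. Some such argument must replace your sketch for the proof to stand.
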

\begin{proof}
Since $u_j\in\mathcal{E}^{0}_{m}(H)$, then as shown above we have that $u_j+H^{-}\in \mathcal{E}_m^{0}$, then $H_m(u_j+H^{-})$ satisfies (\ref{1002}), so it is the same for $H_m(u_j)$. Then, it follows from \cite[Theorem 6.2]{Chi2} that there is $\varphi_j:=U_m(H_m(u_j),0)$. Hence, using the comparison principle (Corollary \ref{14}) for $u_j$ and $\varphi_j+H$, we get $u_j\geq \varphi_j+H,$ so $u_j+H^{-}\geq \varphi_j+H+H^{-}.$ For simplicity put $\upsilon_j=:\varphi_j+H+H^-$ Let $1 <\delta <2$. Denote by $A:=\left\{\varphi_j=\delta(H+H^{-})\right\}\subset \{\dfrac{\delta}{\delta-1}\varphi_j<\upsilon_j\}$, $B:=\{\varphi_j<\delta(H+H^{-})\}=\{\dfrac{\delta+1}{\delta}\varphi_j<\upsilon_j\}$, $C:=\{\varphi_j>\delta(H+H^{-})\}=\{(\delta+1)(H+H^{-})<\upsilon_j\}$, then by \cite[(c) in Corollary 1.15 $\&$ Theorem 1.14]{Chi2} we have 
\begin{eqnarray*}
\int_{\Omega} H_m(u_j+H^-)&\leq &\int_{\Omega} H_m(\upsilon_j)=\int_{A}H_m(\upsilon_j)+\int_{B}H_m(\upsilon_j)+\int_{C}H_m(\upsilon_j)\\
    &\leq & (\dfrac{\delta}{\delta-1})^{m}\int_{\Omega}H_m(\varphi_j)+(\dfrac{\delta+1}{\delta})^{m}\int_{\Omega}H_m(\varphi_j)+(\delta+1)^{m}\int_{\Omega}H_m(H+H^{-}),\\
    &\leq &C_{\delta,m}\left[\int_{\Omega}H_m(u_j)+\int_{\Omega}H_m(H+H^{-})\right]<+\infty,
\end{eqnarray*}
where $C_{\delta,m}$ is a constant depending on $\delta$ and $m$. Thus, $\displaystyle\sup_j\int_{\Omega} H_m(u_j+H^-)<+\infty.$ Finally, by Proposition \ref{55} we obtain 
$$\lim_{j\rightarrow+\infty}\int_{\Omega} (u_j+H^{-})d\mu=\int_{\Omega} (u+H^{-})d\mu,$$
which proves the lemma.
\end{proof}
\begin{lem}\label{1007}
Suppose that $H\in\mathcal{MSH}_m(\Omega)\cap C^{\infty}(\partial\Omega)$ and that $u_j\in \mathcal{E}^{0}_{m}(H)\cap C(\overline{\Omega})$. If $u_j\rightarrow u \in\mathcal{SH}_m(\Omega)$ $a.e.\; dV$ as $j\rightarrow+\infty$, $\sup_j\displaystyle\int_{\Omega}(-u_j)H_m(u_j)<+\infty$, and that  $\displaystyle\int_{\Omega}\vert u-u_s\vert H_m(u_j)\rightarrow0$, as $j\rightarrow+\infty.$ Then $H_m(u_j)$ converges weakly to $H_m(u).$
\end{lem}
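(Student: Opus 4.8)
\emph{Proof proposal.} Since $H_m(u_j)$ and $H_m(u)$ are (locally finite) nonnegative measures, it suffices to prove that $\int_\Omega\phi\,H_m(u_j)\to\int_\Omega\phi\,H_m(u)$ for every $\phi\in C_0^\infty(\Omega)$; here $H_m(u)$ makes sense because, as in the proof of Lemma~\ref{1006}, the hypotheses force $u$ to lie in the domain of the Hessian operator (one may, if one prefers, first reduce to the case in which $[u_j]$ is locally uniformly bounded). Fix $\phi\in C_0^\infty(\Omega)$, set $K=\operatorname{supp}\phi$, and start from the telescoping identity of currents
\[
H_m(u_j)-H_m(u)=\Bigl(\sum_{l=0}^{m-1}(dd^cu_j)^l\wedge dd^c(u_j-u)\wedge(dd^cu)^{m-1-l}\Bigr)\wedge\beta^{n-m}.
\]
Multiplying by $\phi$ and integrating by parts (Stokes' theorem for locally bounded $m$-sh functions, with no boundary contribution since $\phi$ is compactly supported) gives
\[
\int_\Omega\phi\bigl(H_m(u_j)-H_m(u)\bigr)=\sum_{l=0}^{m-1}\int_\Omega(u_j-u)\,dd^c\phi\wedge(dd^cu_j)^l\wedge(dd^cu)^{m-1-l}\wedge\beta^{n-m}.
\]

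Choose $C>0$ with $-C\beta\le dd^c\phi\le C\beta$. Then, viewing $\beta=dd^c|z|^2$, the absolute value of the $l$-th summand is at most $C\int_K|u_j-u|\,(dd^cu_j)^l\wedge(dd^cu)^{m-1-l}\wedge\beta^{\,n-m+1}$, and the Cauchy--Schwarz (Hölder) inequality for mixed Hessian measures --- the weighted form of Corollary~\ref{570}, which one deduces from the Borel-set version $\int_E dd^cv_1\wedge\cdots\wedge dd^cv_m\wedge\beta^{n-m}\le\prod_k(\int_E H_m(v_k))^{1/m}$ together with Hölder's inequality, after a routine regularization of the Borel weight $|u_j-u|$ (split over $\{u_j\ge u\}$ and $\{u_j<u\}$) --- bounds it by
\[
C\Bigl(\int_K|u_j-u|\,H_m(u_j)\Bigr)^{l/m}\Bigl(\int_K|u_j-u|\,H_m(u)\Bigr)^{(m-1-l)/m}\Bigl(\int_K|u_j-u|\,\beta^{\,n}\Bigr)^{1/m}.
\]
I would then conclude term by term: the first factor tends to $0$ by hypothesis; the last factor tends to $0$ because $u_j\to u$ in $L^1_{\mathrm{loc}}(\Omega)$ (the standard $L^1$ convergence of subharmonic functions that converge a.e.); and the middle factor stays bounded in $j$. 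Granting this, each summand tends to $0$, hence so does $\int_\Omega\phi(H_m(u_j)-H_m(u))$, which is the assertion.

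The crux is the boundedness of the cross-term, $\sup_j\int_K|u_j-u|\,H_m(u)\le\sup_j\int_K(-u_j)H_m(u)+\int_K(-u)H_m(u)$ up to an additive constant (absorbing the uniform upper bound $u_j\le H$). This is exactly where the finite-energy hypothesis $\sup_j\int_\Omega(-u_j)H_m(u_j)<+\infty$ is used: one passes to the functions $u_j+H^-\in\mathcal{E}_m^0\subset\mathcal{F}_m$ (as in the discussion preceding Lemma~\ref{55}) and applies a Cegrell-type energy estimate of the shape $\int(-v)H_m(w)\lesssim\bigl(\int(-v)H_m(v)\bigr)^{1/(m+1)}\bigl(\int(-w)H_m(w)\bigr)^{m/(m+1)}$ for functions of finite energy, itself obtained by iterating the mixed inequality (Theorem~\ref{40}) and integrating by parts; combined with lower semicontinuity of the energy this gives the required uniform bound. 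An alternative in the spirit of Section~\ref{77771} is to upgrade the convergence to convergence in $\tilde C_m$-capacity --- the Xing inequality (Theorem~\ref{11}) gives $C_m(\{u_j<u-\varepsilon\})\le \varepsilon^{-m}m!\int_K|u_j-u|\,H_m(u_j)\to0$, and the opposite one-sided estimate from the analogous bound against $H_m(u)$ --- and then invoke Corollary~\ref{00}; this route needs a common lower bound $u_0\le u_j$ in $\mathcal{E}_m$, which is again the delicate point.
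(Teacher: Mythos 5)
Your argument stands or falls on the ``weighted H\"older inequality for mixed Hessian measures'' localized to $K=\mathrm{supp}\,\phi$, i.e.\ an estimate of the shape
\[
\int_K w\,(dd^cu_j)^l\wedge(dd^cu)^{m-1-l}\wedge\beta^{n-m+1}\le\Bigl(\int_K wH_m(u_j)\Bigr)^{l/m}\Bigl(\int_K wH_m(u)\Bigr)^{(m-1-l)/m}\Bigl(\int_K w\,\beta^{n}\Bigr)^{1/m},
\]
which you propose to deduce from a ``Borel-set version'' $\int_E dd^cv_1\wedge\cdots\wedge dd^cv_m\wedge\beta^{n-m}\le\prod_k\bigl(\int_EH_m(v_k)\bigr)^{1/m}$. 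That Borel-set version is false. The Cegrell-type inequalities in the paper (Theorem \ref{40}, Corollary \ref{570}) are global statements proved by integration by parts over all of $\Omega$; they do not localize, and the only localized form available is for $m$-polar sets $E$ (Lemma \ref{28}(3)). Concretely, in $\mathbb{C}^2$ with $m=n=2$ take $v_1=|z_1|^2$ and $v_2=|z_2|^2$: then $H_2(v_1)=H_2(v_2)=0$ while $dd^cv_1\wedge dd^cv_2$ is a positive multiple of the volume form, so on any open $E$ the left-hand side is positive and the right-hand side vanishes. Hence the central estimate of your term-by-term argument is unavailable. Moreover, the two points you defer are exactly the hard ones: (i) the reduction to locally uniformly bounded $u_j$ is the truncation step, which requires the uniform tail bound $\int_{\{u_j<-N\}}H_m(u_j)\le N^{-1}\sup_j\int(-u_j)H_m(u_j)$ and is not a harmless normalization; and (ii) the uniform bound on $\int_K|u_j-u|\,H_m(u)$ --- the hypothesis controls integrals against $H_m(u_j)$, not against $H_m(u)$, and the ``Cegrell-type energy estimate'' you invoke for it is neither in the paper nor supplied. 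Your fallback via convergence in $\tilde C_m$-capacity has the same asymmetry: the Xing inequality controls $C_m(\{u_j<u-\varepsilon\})$ through $\int|u-u_j|H_m(u_j)$, but the opposite inclusion needs a bound against $H_m(u)$ that you do not have.

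The paper's proof avoids all of this with a comparison device: it sets $G_j=\max\{u_j+1/j,\,u\}-1/j$, so that $H_m(G_j)$ and $H_m(u_j)$ agree off the set $\{u_j+1/j\le u\}$, whose $H_m(u_j)$-mass is at most $j\int_\Omega|u-u_j|H_m(u_j)\to0$ by hypothesis; it then shows $H_m(G_j)\to H_m(u)$ weakly by truncating at level $-N$ (the truncation error being $O(1/N)$ uniformly in $j$ thanks to $\sup_j\int(-u_j)H_m(u_j)<+\infty$) and applying convergence in capacity together with Corollary \ref{00} to the truncations $\max(G_j,-N)$. To salvage your approach you would have to replace the false localized H\"older inequality by an argument of this comparison type; there is no known way to repair the localization itself.
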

\begin{proof}
We can assme that $\displaystyle\int_{\Omega}\vert u-u_s\vert H_m(u_j)<1/j^{2}.$ Put $G_j=\max\{u_j+1/j,u\}-1/j$, then for 
$\chi\in C^{\infty}_{0}(\Omega),$
\begin{eqnarray*}
\left\vert\int_{\Omega} \chi H_m(u)-\int_{\Omega} \chi H_m(u_j)\right\vert &=& \left\vert\int_{\Omega} \chi[H_m(u)-H_m(G_j)+H_m(G_j)-H_m(u_j)]\right\vert\\
    &\leq & \left\vert\int_{\Omega} \chi[H_m(u)-H_m(G_j)]\right\vert+\left\vert\int_{\{u_j+1/j\leq u\}}\chi\left[ H_m(G_j)-H_m(u_j)\right]\right\vert\\
&\leq & \sup\vert\chi\vert\left(\underbrace{\int_{\Omega}\left\vert H_m(u)-H_m(G_j)\right\vert}_{\tau_j}+2\int_{\{u_j+1/j\leq u\}} H_m(u_j)\right).
\end{eqnarray*} 
But, $$\int_{\{u_j+1/j\leq u\}} H_m(u_j)\leq j\int_{\Omega} \vert u-u_j\vert H_m(u_j)\leq 1/j,$$ so it remains to estimate $\tau_j.$ Let $\Gamma^{1}=\displaystyle\int_{\Omega}\left\vert H_m(\max(G_j,-N))-H_m(\max(u,-N))\right\vert$ and \\ $\Gamma^{2}=\displaystyle\int_{\Omega}\left\vert H_m(\max(u,-N))-H_m(u)\right\vert.$ Then
\begin{eqnarray*}
\tau_j &=& \int_{\Omega}\left\vert H_m(G_j)-H_m(\max(G_j,-N))\right\vert+\Gamma^{1}+\Gamma^{2}\\
&=&\int_{\{G_j<-N\}}H_m(G_j)+\int_{\{G_j\geq-N\}}\underbrace{\left\vert H_m(G_j)-H_m(\max(G_j,-N))\right\vert}_{=0}+\Gamma^1+\Gamma^2\\
&\leq &\dfrac{1}{N}\int_{\Omega}(-u_j)H_m(u_j)+\Gamma^1+\Gamma^2\\
&\leq & \dfrac{C}{N}+\Gamma^{1}+\Gamma^{2},
\end{eqnarray*} 
where $C=\int_{\Omega}(-u_j)H_m(u_j)<+\infty.$ On the other hand, it follows from the construction of $G_j$ that $\max(G_j,-N))$ converges to $\max(u,-N)$ in $C_m$-capacity, then by Corollary \ref{00} we conclude that 
$$H_m(\max(G_j,-N))\;\hbox{converges\;weakly\;to}\;H_m(\max(u,-N)),\;\;\hbox{as}\; j\rightarrow+\infty,$$ 
Then $\Gamma^1\rightarrow 0$ as $j\rightarrow +\infty.$ Furtheremore, the same argument gives that $H_m(\max(u,-N))$ converges\;weakly\;to $H_m(u)$, as $N\rightarrow+\infty,$ so $\Gamma^2\rightarrow0$, as $N\rightarrow +\infty,$ which completes the proof.
\end{proof}
\begin{thm}\label{1004}
Let $\Omega$ be a smoothly bounded strictly $m$-pseudoconvex domain in $\mathbb{C}^{n}$, $n\geq2$, $p\geq1$, $\mu$ a positive measure on $\Omega$ with finite mass and $H\in \mathcal{MSH}_{m}(\Omega)\cap C^{\infty}(\partial \Omega)$. Then there is a uniquely determined $u\in \mathcal{F}_{m}^{p}(H)$ with $H_m(u)=\mu$ if and only if $\mu$ satisfies (\ref{1002}).
\end{thm}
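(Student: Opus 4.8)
The statement has the form of an equivalence, so the plan is to prove the two implications separately, the necessity being the easy direction and the sufficiency the substantial one.

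\emph{Necessity.} Suppose $u\in\mathcal{F}_m^p(H)$ solves $H_m(u)=\mu$. By definition there is $\varphi\in\mathcal{F}_m^p$ with $H+\varphi\le u\le H$, hence $\varphi+H+H^-\in\mathcal{F}_m^p$ by convexity of the Cegrell class (recall $H+H^-\in\mathcal{E}_m^0$, as noted before Lemma \ref{55}), and $u+H^-$ lies between $\varphi+H+H^-$ and $H+H^-$, so $u+H^-\in\mathcal{F}_m^p$ as well. For any $\psi\in\mathcal{E}_m^0$, one estimates $\int(-\psi)^p\,d\mu=\int(-\psi)^p H_m(u)$; writing $u$ in terms of $u+H^-$ and using the fundamental inequality of $\mathcal{F}_m^p$-theory together with the Hölder-type inequalities (Theorem \ref{40}, Corollary \ref{570}, Lemma \ref{28}) one bounds this by $A\big(\int(-\psi)^p H_m(\psi)\big)^{p/(p+m)}$ for a constant $A$ depending only on $u$ (and on $n,m,p$). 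The one point requiring care is handling the extra term $H^-$ and the non-trivial boundary value $H$, but this is absorbed exactly as in the estimate displayed right before Lemma \ref{55}.

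\emph{Sufficiency.} Assume $\mu$ (finite mass) satisfies (\ref{1002}) for some $p\ge 1$. First reduce to compactly supported $\mu$: exhaust $\Omega$ by the fundamental sequence $[\Omega_j]$, set $\mu_j=\chi_{\Omega_j}\mu$, solve the problem for each $\mu_j$, and recover the solution in the limit using monotonicity and Theorem \ref{31} for uniqueness. For compactly supported $\mu$, use Theorem 6.2 of \cite{Chi2} (or Theorem \ref{1004} as already known in the $H=0$ case) to get $w\in\mathcal{F}_m^p$ with $H_m(w)=\mu$; then consider $u:=U_m(\mu,H)$, the upper envelope introduced at the start of Section \ref{77772}. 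By the comparison principle (Corollary \ref{14}), comparing $u$ with $w+H$ gives $w+H\le u\le H$, so $u\in\mathcal{F}_m^p(H)$ once we know $H_m(u)=\mu$; and $u+H^-\in\mathcal{F}_m^p$ as shown before Lemma \ref{55}. The content is therefore to prove $H_m(u)=\mu$.

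\emph{Proving $H_m(u)=\mu$.} The inequality $H_m(u)\ge\mu$ holds by definition of the envelope $\mathcal{B}_m(\mu,H)$. For the reverse, the classical balayage/approximation scheme applies: approximate $\mu$ weakly by measures with bounded densities (or by $H_m$ of continuous $m$-sh functions), solve the continuous Dirichlet problem at each stage — here is where Theorem \ref{1004}'s continuous-data companion and the results of the previous subsection on $U_m(\cdot,H)$ enter — obtain solutions $u_j\in\mathcal{E}_m^0(H)\cap C(\overline\Omega)$ with $H_m(u_j)$ converging weakly to $\mu$, and show $u_j\to u$ in a strong enough sense. The key technical inputs are precisely Lemmas \ref{1006} and \ref{1007}: Lemma \ref{1006} gives $\int u_j\,d\mu\to\int u\,d\mu$ from (\ref{1002}) and the uniform mass bound $\sup_j\int H_m(u_j)<\infty$ (which one secures via the comparison-principle estimate in the proof of Lemma \ref{1006}), and Lemma \ref{1007} upgrades this to weak convergence $H_m(u_j)\to H_m(u)$; combined with $H_m(u_j)\to\mu$ this yields $H_m(u)=\mu$. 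Finally, the bound $\sup_j\int(-u_j)H_m(u_j)<\infty$ needed in Lemma \ref{1007} and the $\mathcal{E}_m^p$-estimate come from (\ref{1002}) with exponent $p$ via the Hölder-type inequality (\ref{1011}) in the proof of Lemma \ref{55}.

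\emph{Main obstacle.} The delicate step is establishing $H_m(u)\le\mu$: one must produce an approximating sequence $u_j$ whose Hessian masses converge weakly to $\mu$ \emph{and} which converges to the envelope $u$ strongly enough (a.e. $dV$ plus $\int|u-u_j|H_m(u_j)\to 0$) to invoke Lemma \ref{1007}. Getting the uniform bound $\sup_j\int H_m(u_j)<\infty$ — not automatic since $\mu$ need only be dominated in the integral sense (\ref{1002}) — is exactly what the comparison-principle argument of Lemma \ref{1006} is designed to supply, and threading that through the limit is the heart of the matter. Uniqueness, by contrast, is immediate from the Identity Principle (Theorem \ref{31}), since two solutions $u_1,u_2\in\mathcal{F}_m^p(H)$ with equal Hessian measure satisfy $\int(-\omega)H_m(u_i)<\infty$ for a suitable $\omega\in\mathcal{E}_m$ and one compares $u_1$ with $\max(u_1,u_2)$ and symmetrically.
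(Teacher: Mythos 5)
Your necessity argument and your overall architecture for sufficiency (approximate $\mu$, solve the continuous problems, pass to the limit via Lemmas \ref{1006} and \ref{1007}) match the paper's, and your identification of the hypotheses of Lemma \ref{1007} as the crux is accurate. But two genuine gaps remain. First, you never supply the mechanism that makes $\int_\Omega|u-u_j|H_m(u_j)\to 0$: you correctly flag it as ``the heart of the matter'' and then defer it to ``the classical balayage/approximation scheme.'' The paper's device is specific. It discretizes $\mathrm{supp}\,\mu$ into $3^{2ns}$ congruent cubes $I_s^j$ of side $d_s=d/3^s$ and replaces $\mu$ by $\mu_s=\sum_j\bigl(\int_{I_s^j}d\mu\bigr)\chi_{I_s^j}d_s^{-2n}dV$, so that $H_m(u_s)=\mu_s$ has constant density on each small cube; this is exactly what allows one to bound $\int|u-u_s|H_m(u_s)$ by $C\int\Theta_s\,d\mu$, where $\Theta_s(x)$ is the average of $|u-u_s|$ over a ball of radius $nd_s$, and then to drive $\int\Theta_s\,d\mu\to 0$ using the sub-mean-value property, monotone convergence and Lemma \ref{1006}. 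Without a density structure of this kind for $H_m(u_j)$, the required strong convergence does not follow from (\ref{1002}) and a.e.\ convergence alone.

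Second, Lemmas \ref{55} and \ref{1006} require $p>m/(m-1)$, so the compactly supported argument only covers that range of exponents; for general $p\ge 1$ one needs the separate reduction of the paper's Step 2: decompose $\chi_{K_j}d\mu=g_jH_m(\psi_j)$ by Theorem \ref{51}, truncate to $\inf(g_j,s)H_m(\psi_j)$, observe that the truncated measure satisfies (\ref{1002}) for some exponent larger than $m/(m-1)$ by \cite[Theorem 6.2]{Chi2}, solve, and pass to the limit with the uniform minorant $U_m(\mu,0)+H$. Your plan of ``reduce to compactly supported $\mu$ by exhaustion'' does not address this exponent issue at all. A smaller point: defining the solution as the envelope $U_m(\mu,H)$ and asserting $H_m(U_m(\mu,H))\ge\mu$ adds the burden of showing that $\mathcal{B}_m(\mu,H)$ contains a locally bounded element and that the limit of your approximants coincides with the envelope; the paper avoids this by taking $u$ to be the a.e.\ limit of the $u_s$ and concluding $H_m(u)=\mu$ directly from the two weak convergences $H_m(u_s)\to H_m(u)$ and $H_m(u_s)=\mu_s\to\mu$.
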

\begin{proof}
Suppose $u \in\mathcal{F}_{m}^{p}(H),$ $H_m(u)=\mu.$ Then $H\geq u\geq \varphi+H$ for some $\varphi\in\mathcal{F}_{m}^{p},$ so $0\geq u+H^{-}\geq \varphi+H+H^{-}$. By \cite[Theorem 3.9]{Chi2} $u+H^{-}\in\mathcal{F}_{m}^{p}$, since  $\varphi+H+H^{-}\in\mathcal{F}_{m}^{p}$. Therefore, by \cite[Proposition 5.4 $\&$ Theorem 6.2]{Chi2}, $H_m(u+H^{-})$ satisfies (\ref{1002}), and since $\mu=H_m(u)\leq H_m(u+H^{-})$, so does $\mu$. We split the proof into two steps.
\par \textbf{Step 1.} Assume that $p>m/(m-1)$ and $\mu$ has compact support. Let us consider a subdivision $I_s$ of supp$\mu$ consisting of $3^{2ns}$ congruent semi-open cubes $I^{j}_{s}$ with side $d_s = d/3^{s}$,  where $d:= \hbox{diam}(\Omega)$
and $1\leq j\leq 3^{2ns}$. Let $\psi$ be as in \cite[Theorem 4.1]{ACH}, since $\mu$ is compactly supported, then we can find $0<A$ big enough such that $H_m(A\psi)\geq \mu$. Let $[\Omega_{k}]_{k=1}^{\infty}$ be an increasing sequence of smoothly bounded strictly $(m-1)-$pseudoconvex domains. Then \cite[Theorem $2.10$]{Di-Ko} provides solutions $u^{k}_s=U_m(\mu_{\vert_{\Omega_{k}}},0)$, such that 
$$H_m(u^{k}_s)=\mu_s:=\sum_{j}\left(\int_{I^{j}_{s}}d\mu\right)\chi_{I^{j}_{s}}\dfrac{1}{d^{2n}_{s}}dV\; \hbox{in}\;\; \Omega_k,$$
with $u^{k+1}_{s_{\vert_{\partial\Omega_{k+1}}}}=0,$ $A\psi_{\vert_{\partial\Omega_{k+1}}}<0$, and $H_m(A\psi)\geq H_m(u_{s}^{k+1})$, then by \cite[(c) in Corollary 1.15]{Chi2} we get 
\begin{equation}\label{1005}
A\psi\leq u^{k+1}_{s}\leq u_{s}^{k}\leq 0\; \hbox{on}\; \Omega_k.
\end{equation}
Now, let prove that the sequence $[u_{s}^{k}]$ is locally uniformly convergent on $\Omega$. Take $K\Subset \Omega, \varepsilon>0$ and let $k_0$ be such that $K\subset\Omega_{k_0}$  and $\Vert A\psi\Vert_{\partial\Omega_{k_0}}\leq\varepsilon.$ Then by (\ref{1005}) and again the comaparison principle \cite[(c) in Corollary 1.15 ]{Chi2} for $k,j\geq k_0$ one has
$$\Vert u_{s}^{k}-u_{s}^{j}\Vert_{K}\leq\Vert u_{s}^{k}-u_{s}^{j}\Vert_{\Omega_{k_0}}\leq\Vert A\psi\Vert_{\Omega_{k_0}}\leq\Vert A\psi\Vert_{\partial\Omega_{k_0}}\leq\varepsilon,$$
thus the sequence $[u_{s}^{k}]_{k}$ is locally uniformly convergent on $\Omega$. Therefore the function $u_{s}:=\displaystyle\lim_{k\rightarrow+\infty}u_{s}^{k} \in \mathcal{E}_m^{0}(\Omega)\cap C(\overline{\Omega}),$ and since $[u_{s}^{k}]_{k}$ decreases to $u_{s}$, we obtain 
$$H_m(u_s)=\mu_s:=\sum_{j}\left(\int_{I^{j}_{s}}d\mu\right)\chi_{I^{j}_{s}}\dfrac{1}{d^{2n}_{s}}dV\;\hbox{in}\;\; \Omega,$$
which converges weakly to $\mu$. Using the super mean-value proprety for $m$-superharmic functions, we have
$$\int (-u_s) H_m(u_s)\leq C\int (-u_s)d\mu,$$
which is uniformly bounded since $\displaystyle\int H_m(u_s)<+\infty,$ as already noted in (\ref{1011}). 
Let $\Omega^{'}\Subset\Omega,$ and let $W$ be a smoothly strictly $m$-pseudoconvex domain such that $\Omega^{'}\Subset W\Subset\Omega,$ then ,thanks to [DK, Theorem 2.10] one can find $h\in\mathcal{SH}_m
(\Omega)\cap C(\overline{\Omega})$ such that $H_m(h)=gdV$, where $g:= \chi_{\Omega^{'}}.$ On the other hand, by \cite[Lemma 3.5]{Chi2} we have 
$$\sup_s\int_{\Omega^{'}}(-u_s)dV=\sup_s\int_{\Omega^{'}}(-u_s) H_m(h)\leq D_1\sup_s\int_{\Omega^{'}}(-u_s)H_m(u_s)\times \int_{\Omega^{'}}(-h)H_m(h)<+\infty,$$
so we can pick a subsequence $[u_{s_{j}}]_{j=1}^{\infty}$, wich we denoted again $[u_s]$, such that $u_s \rightarrow u\in \mathcal{SH}_m(\Omega),$ as $s\rightarrow+\infty,$ a.e dV, and again since 
$H_m(u_{j}^{s})\leq H_m(U_m(\mu_{j}^{s},0)+H)$, Then by Corollary \ref{14}
$H\geq u^{s}_{j}\geq U_m(\mu^{s}_{j},0)+H.$
$$H\geq u_s\geq U_m(H_m(u_s),0)+ H,$$
where $\limsup_{s\rightarrow+\infty}U_m((H_m(u_s),0))=\omega\in\mathcal{F}_m^{p}.$ Therefore, $u\in \mathcal{F}_m^{p}(H).$ Let's define now
$$\Theta_s(x)=\dfrac{1}{B(nd_s)}\int_{\{\vert\xi\vert<nd_s\}}\vert u(x+\xi)-u_s(x+\xi)\vert dV,$$
where $B(r)$ is the volume of the ball with radius $r.$ Then 
$$\int_{\Omega}\vert u-u_s\vert H_m(u_s)=\sum_{j}\left(\int_{I^{j}_{s}}d\mu\right)\dfrac{1}{d^{2n}_{s}}\int_{I^{j}_{s}}\vert u-u_s\vert dV\leq\sum_{j}\dfrac{B(nd_s)}{d^{2n}_{s}}\left(\int_{I^{j}_{s}}\Theta_s(x)d\mu(x)\right)\leq C\left(\int_{I^{j}_{s}}\Theta_s(x)d\mu(x)\right).$$
Now, denote by $\zeta= x+\xi$ and $U_j(\zeta):=\sup_{j\geq s}u_j(\zeta).$ We have
\begin{eqnarray*}
\Theta_s(x)&=& \dfrac{1}{B(nd_s)}\int_{\{\vert\xi\vert<nd_s\}}\vert u(\zeta)-u_s(\zeta)\vert dV\\
&=&\dfrac{1}{B(nd_s)}\int_{\{\vert\xi\vert<nd_s\}}\vert u(\zeta)-U_j(\zeta)+U_j(\zeta)-u_s(\zeta)\vert dV\\
&\leq& \dfrac{1}{B(nd_s)}\int_{\{\vert\xi\vert<nd_s\}} (U_j(\zeta)-u(\zeta))dV+\int_{\{\vert\xi\vert<nd_s\}} U_j(\zeta)dV-\int_{\{\vert\xi\vert\leq nd_s\}} u_s(\zeta) dV\\
&\leq &\dfrac{1}{B(nd_s)}\int_{\{\vert\xi\vert<nd_s\}} [(U_j(\zeta))^{*}-u(\zeta)]dV+\int_{\{\vert\xi\vert<nd_s\}} (U_j(\zeta))^{*}dV-u_s(x).
\end{eqnarray*} 
Then it follows from monotone convergence and Lemma \ref{1006} that $\int_{\Omega}\Theta_s(x)d\mu(x)\rightarrow 0$, as $s\rightarrow+\infty$, and using Lemma \ref{1007} we obtain that $H_m(u_s)$ tends weakly to $H_m(u)$, as $s\rightarrow+\infty$.
\par \textbf{Step 2.} Now, assume that $p\geq 1$. Let $[K_j]$ be an increasing sequence of compact subsets of $\Omega$ with $\cup^{\infty}_{j=1}K_j=\Omega$. By Theorem \ref{51} there exists $\psi_j\in \mathcal{E}_m^{0}$ such that $\chi_{K_j}d\mu=g_j H_m(\psi_j)$ for some $0\leq g_j^{1}(H_m(\psi_j))$. Let $\mu_j^{s}:=\inf(g_j,s)H_m(\psi_j),$ then by Theorem \ref{51} there exists $\varphi\in\mathcal{E}_m^{0}\subset \mathcal{F}_m^{p}$ such that $H_m(\varphi)=\mu_j^{s}$, this implies by \cite[Theorem 6.2]{Chi2} that $\mu^{s}_{j}$ satisfies (\ref{1002}) for $p>m/(m-1)$, therefore, We have by the first part that there exists  $u^{s}_{j}\in\mathcal{E}^{0}_{m}(H)$ with $H_m(u^{s}_{j})=\mu_j^{s}$, and $H\geq u^{s}_{j}\geq U_m(\mu^{s}_{j},0)+H.$
However, $H_m(U_m(\mu,0))\geq H_m(U_m(\mu^{s}_{j},0)),$ then $U_m(\mu^{s}_{j},0)\geq U_m(\mu,0)$, 
Therefore 
$$H\geq u^{s}_{j}\geq U_m(\mu,0)+H.$$
Since $U_m(\mu,0)\in\mathcal{F}_m^{p}$ (thanks to \cite[Theorem 6.2]{Chi2}), so $u_j:=\displaystyle\lim_{s\rightarrow+\infty}u^{s}_{j}\in \mathcal{F}_m^{p}(H)$, and finally $u:=\lim_{j\rightarrow+\infty}u_{j}\in \mathcal{F}_m^{p}(H)$. In the other hand, since $H_m(u_j)=\chi_{K_j}d\mu$, it follows that $H_m(u)=\mu$.
\end{proof}
\subsection{The Dirichlet problem with continuous boundary data}
\par We start this section by recalling the decomposition theorem for positive measures. For the proof of Theorem \ref{51} we use the Radon-Nikodym theorem in \cite{Ceg1} and the same arguments in \cite[Theorem 5.3]{Chi2}.
\begin{thm}\label{51}
Assume that $\mu$ is a positive measure on  $\Omega$, Then there exists a function  $\phi \in \mathcal{E}_{m}^{0}$ and a function $0\leq f \in L^{1}_{Loc}\left(H_m(\phi)\right)$ such that $\mu= fH_m(\phi)+ \nu,$
where $\nu$ is carried by an $m$-polar set.
In particular, if $\mu$ vanishes on all $m$-polar sets, then there is an increasing sequence of measures $H_m(u_j)$ tending to $\mu$ as $j\rightarrow+\infty$, where $u_j\in \mathcal{E}_{m}^{0}$.
\end{thm}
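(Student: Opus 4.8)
The plan is to follow the classical pattern of Cegrell's decomposition theorem, in the form adapted in \cite[Theorem 5.3]{Chi2}, using the Radon-Nikodym theorem of \cite{Ceg1}. The only ingredients special to the $m$-subharmonic setting are: $\mathcal{E}_m^0$ is a convex cone with finite mixed masses (Corollary \ref{570}); $H_m(u+v)\ge H_m(u)+H_m(v)$ for bounded $m$-subharmonic $u,v$ (expand $(dd^c(u+v))^m\wedge\beta^{n-m}$ and drop the nonnegative mixed terms); and a finite measure dominated by a multiple of the capacity $C_m$ is $H_m(w)$ for some $w\in\mathcal{E}_m^0$ (the Kolodziej-type solvability for the Hessian operator, \cite{Ngo}, see also \cite{Chi2}). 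First I would reduce to $\mu(\Omega)<+\infty$. Splitting $\mu$ into the finite pieces $\chi_{\Omega_j\setminus\Omega_{j-1}}\mu$ along a fundamental sequence $[\Omega_j]$, one decomposes each piece separately and glues the potentials: if $\phi_j\in\mathcal{E}_m^0$ and the $c_j>0$ are small enough that $\sum_jc_j\|\phi_j\|_{L^\infty}<+\infty$ and $\sum_jc_j(\int_\Omega H_m(\phi_j))^{1/m}<+\infty$, then $\phi:=\sum_jc_j\phi_j$ lies in $\mathcal{E}_m^0$ and $H_m(\phi)\ge c_j^mH_m(\phi_j)$ for every $j$. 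Hence every absolutely continuous piece can be absorbed into one $H_m(\phi)$, the singular pieces combine to a measure carried by a countable union of $m$-polar sets (again $m$-polar), and the resulting density $f$ is locally integrable since only finitely many pieces meet any compact set.

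Assume now $\mu(\Omega)<+\infty$. Let $\mathcal M$ be the collection of measures $\sigma$ with $0\le\sigma\le\mu$ that can be written $\sigma=gH_m(w)$ with $w\in\mathcal{E}_m^0$ and $g\ge0$ measurable. If $\sigma_i=g_iH_m(w_i)\in\mathcal M$, $i=1,2$, then since $H_m(w_i)\le H_m(w_1+w_2)$ both $\sigma_i$ are absolutely continuous with respect to $H_m(w_1+w_2)$ with integrable density, so $\sigma_1\vee\sigma_2\le\sigma_1+\sigma_2$ has the same property and is $\le\mu$; thus $\sigma_1\vee\sigma_2\in\mathcal M$ and $\mathcal M$ is upward directed. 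Put $c:=\sup\{\sigma(\Omega):\sigma\in\mathcal M\}\le\mu(\Omega)<+\infty$; choose $\sigma_j\in\mathcal M$ with $\sigma_j(\Omega)\to c$, and, replacing $\sigma_j$ by $\sigma_1\vee\dots\vee\sigma_j$, assume $(\sigma_j)$ increasing. Gluing the underlying potentials as above yields a single $\phi\in\mathcal{E}_m^0$ with $\sigma_j\ll H_m(\phi)$ for all $j$; the densities $d\sigma_j/dH_m(\phi)$ then increase to some $f\ge0$ with $\int_\Omega fH_m(\phi)=c$, so $\sigma_\infty:=fH_m(\phi)\in\mathcal M$ attains the supremum. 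Set $\nu:=\mu-fH_m(\phi)\ge0$.

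The key step is to show $\nu$ is carried by an $m$-polar set. Suppose not, and let $P_0$ be an $m$-polar set with $\nu(P_0)$ maximal among $m$-polar sets; then $\nu':=\chi_{\Omega\setminus P_0}\nu$ is nonzero, finite, and charges no $m$-polar set. The crucial sub-lemma, and the main obstacle, is that such a $\nu'$ dominates a nontrivial Hessian measure of an $\mathcal{E}_m^0$ function. One proves it by a Radon-Nikodym argument with respect to the capacity $C_m$ (as in \cite{Ceg1}): $\nu'$ is the increasing limit of measures $\nu'_k$ with $\nu'_k\le kC_m$, and for a suitable $\Omega'\Subset\Omega$ and $k$ the measure $\chi_{\Omega'}\nu'_k$ is nonzero and, being dominated by $kC_m$, equals $H_m(w)$ for some $w\in\mathcal{E}_m^0$. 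Since $H_m(w)\le\nu'\le\mu-fH_m(\phi)$, the measure $fH_m(\phi)+H_m(w)\le\mu$ again belongs to $\mathcal M$ (same absolute-continuity argument, now with respect to $H_m(\phi+w)$) and has total mass $c+H_m(w)(\Omega)>c$, contradicting the definition of $c$. Therefore $\nu$ is carried by an $m$-polar set, and $\mu=fH_m(\phi)+\nu$ is the required decomposition.

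Finally, if $\mu$ charges no $m$-polar set, then $\nu=0$ because $\nu\le\mu$ vanishes on the $m$-polar set carrying it, so $\mu=fH_m(\phi)$. For each $j\ge1$ the measure $\mu_j:=\min(f,j)H_m(\phi)\le jH_m(\phi)=H_m(j^{1/m}\phi)$ is finite with bounded density and $j^{1/m}\phi\in\mathcal{E}_m^0$, so by \cite[Theorem 6.2]{Chi2} there is $u_j\in\mathcal{E}_m^0$ with $H_m(u_j)=\mu_j$; since $\min(f,j)\uparrow f$, the measures $H_m(u_j)$ increase to $\mu$, completing the proof. The only non-routine point is the domination sub-lemma of the third paragraph; everything else transcribes the arguments of \cite{Ceg1} and \cite{Chi2} into the language of $m$-subharmonic functions recalled above.
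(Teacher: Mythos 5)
Your global strategy (reduce to finite mass, glue potentials in $\mathcal{E}_m^0$ via $H_m(u+v)\ge H_m(u)+H_m(v)$ and Corollary \ref{570}, take a maximal element of the directed family $\mathcal{M}$, and deduce the ``In particular'' part from $\min(f,j)H_m(\phi)\le H_m(j^{1/m}\phi)$ plus the subsolution theorem of \cite{Ngo}) is the classical Cegrell scheme, and it is exactly what the paper itself does by deferring to the Radon--Nikodym theorem of \cite{Ceg1} and to \cite[Theorem 5.3]{Chi2}. Those parts of your argument are fine. The problem is the step you yourself single out as the crux: that a nonzero finite measure $\nu'$ charging no $m$-polar set must dominate a nontrivial Hessian measure. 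Your justification of it does not work as written. First, a ``Radon--Nikodym argument with respect to the capacity $C_m$'' is not available: $C_m$ is subadditive but not a measure, so there is no decomposition of $\nu'$ against it, and the assertion that $\nu'$ is an increasing limit of measures $\nu_k'\le kC_m$ is not something you can extract from the mere vanishing of $\nu'$ on $m$-polar sets. In the literature this approximation property is a \emph{consequence} of the decomposition theorem you are proving (once $\mu=fH_m(\phi)$ one has $\min(f,k)H_m(\phi)\le k\,\|\phi\|^m_{L^\infty(\Omega)}C_m$); invoking it here makes the argument circular.

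Second, even granting such a $\nu_k'\le kC_m$, the claim that a finite measure linearly dominated by capacity equals $H_m(w)$ with $w\in\mathcal{E}_m^0$ overstates the Ko{\l}odziej-type results: \cite{Ngo} requires a bounded subsolution, which you have not produced, and boundedness of solutions needs a Dini-type strengthening of the capacity bound, not $\mu\le kC_m$. What linear domination does give (for a compactly supported finite measure) is the energy inequality (\ref{1002}) for $1\le p<m$, hence a solution in $\mathcal{F}_m^p$ by Theorem \ref{1004} --- but then your class $\mathcal{M}$, defined with potentials in $\mathcal{E}_m^0$, has to be reworked (this is repairable by truncating $w$, but it is not addressed). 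The way the cited sources actually close this gap is different: one shows directly, using quasi-continuity and the outer regularity of $C_m$, that a suitable restriction $\chi_X\nu'$ of positive mass satisfies the energy estimate (\ref{1002}), and then solves in $\mathcal{F}_m^p$ to contradict singularity. As it stands, your proof replaces the only hard point of the theorem by an appeal to a fact that is downstream of the theorem itself, so there is a genuine gap.
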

\begin{thm}\label{1009}
Suppose that $\Omega$ is a bounded $m$-pseudoconvex domain, $H\in \mathcal{MSH}_{m}(\Omega)\cap C(\partial \Omega),$ and that $\mu$ is a positive measure on $\Omega$ such that $U_m(\mu,H)\in\mathcal{SH}_{m}(\Omega)\cap L^{\infty}(\Omega)$ and such that $\lim_{z\rightarrow \xi}U_m(\mu,H)(z)=H(\xi)$, $\forall \xi\in \partial\Omega$. Then for every positive measure on $\nu$ dominated by $\mu$, $H_m(U_m(\nu,H))=\nu$ and $U_m(\nu,H)$ satisfies the inequality $H\geq U_m(\nu,H)\geq U_m(\mu,H).$
\end{thm}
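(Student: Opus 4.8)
Write $u:=U_m(\nu,H)$. I would first dispose of the inequality $H\ge u\ge U_m(\mu,H)$, which is essentially formal. Since $\nu\le\mu$, every $v\in\mathcal{B}_m(\mu,H)$ satisfies $H_m(v)\ge\mu\ge\nu$, so $\mathcal{B}_m(\mu,H)\subset\mathcal{B}_m(\nu,H)$ and therefore $u\ge U_m(\mu,H)$; on the other hand $u\le H$ by the $m$-maximality of $H$, exactly as in the discussion that precedes the statement. Consequently $u\in\mathcal{SH}_m(\Omega)\cap L^{\infty}(\Omega)$, and being squeezed between $U_m(\mu,H)$ (which has boundary limit $H(\xi)$ by hypothesis) and $H$ (continuous at $\partial\Omega$), it satisfies $\lim_{z\to\xi}u(z)=H(\xi)$ for every $\xi\in\partial\Omega$. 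Moreover $u=U_m(\nu,H)\in\mathcal{B}_m(\nu,H)$ as established in the preamble, so $H_m(u)\ge\nu$; and since $u$ is bounded, $H_m(u)$, hence also $\nu$, carries no mass on $m$-polar sets. So the whole content of the theorem is the reverse inequality $H_m(u)\le\nu$.

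For that I would use a balayage/uniqueness scheme. Globally: $u$ is a bounded $m$-subharmonic \emph{subsolution} of the Dirichlet problem $H_m(\cdot)=\nu$ with boundary data $H$ on the $m$-hyperconvex domain $\Omega$, so by the subsolution theorem for the complex Hessian equation \cite{Ngo} there is $g\in\mathcal{SH}_m(\Omega)\cap L^{\infty}(\Omega)$ with $H_m(g)=\nu$ and $\lim_{z\to\xi}g(z)=H(\xi)$. Then $g\in\mathcal{B}_m(\nu,H)$, whence $g\le u$ by the definition of $U_m(\nu,H)$; conversely $H_m(g)=\nu$ vanishes on $m$-polar sets, $H_m(g)=\nu\le H_m(u)$, and $\liminf_{z\to\xi}[g(z)-u(z)]=0$, so the comparison principle (Corollary \ref{14}, condition $(1)$, applied to $g$ and $u$) gives $g\ge u$. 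Hence $u=g$ and $H_m(u)=\nu$, which together with the first paragraph finishes the proof. If one does not wish to invoke \cite{Ngo} in this generality, the same effect is obtained locally: given $p\in\Omega$ choose a ball $B\Subset\Omega$ around $p$ with $\nu(\partial B)=0$, solve $H_m(h)=\nu|_B$ in $B$ with $\limsup_{z\to\zeta}h(z)\le u(\zeta)$ on $\partial B$ and $h\le H$ on $B$ (obtaining $h$ as a decreasing limit of solutions with continuous boundary data decreasing to $u|_{\partial B}$); by comparison in $B$ one gets $u\le h\le H$, and gluing $w:=\max(h,u)$ on $B$ and $w:=u$ on $\Omega\setminus B$ yields $w\in\mathcal{SH}_m(\Omega)$ with $w\le H$ and $H_m(w)\ge\nu$ (here the choice $\nu(\partial B)=0$ is used), i.e. $w\in\mathcal{B}_m(\nu,H)$; thus $w\le u$, forcing $h=u$ on $B$ and $H_m(u)|_B=\nu|_B$. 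Letting $B$ range over a cover of $\Omega$ gives $H_m(u)=\nu$.

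The delicate point, in either version, is the passage from ``$u$ is the largest subsolution'' to ``$u$ solves the equation'': one needs the \emph{existence} of some bounded solution with the correct boundary data (furnished by \cite{Ngo} globally, or by the localized Dirichlet problem on balls) together with a \emph{comparison/identity} step identifying it with $u$, and the latter is available precisely because $\nu$ carries no $m$-polar mass, which we got for free from the boundedness of $u$. The remaining steps — the Choquet-type regularization used to place $U_m(\nu,H)$ in $\mathcal{B}_m(\nu,H)$, the $\max$-inequalities for $m$-subharmonic functions, Bedford--Taylor convergence along monotone sequences, and, in the local argument, the bookkeeping ensuring $H_m(w)\ge\nu$ survives the gluing across $\partial B$ — are routine.
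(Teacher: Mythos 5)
Your argument is correct in outline but follows a genuinely different route from the paper. The paper first reduces to compactly supported $\nu$, then (for smoothly bounded strictly $m$-pseudoconvex $\Omega$ and smooth boundary data) shows that $\nu$ inherits the energy estimate (\ref{1002}) from $U_m(\nu,0)\in\mathcal{E}_m^0$ via \cite[Lemma 3.5]{Chi2}, and invokes its own Theorem \ref{1004} --- the long discretization/energy-class construction in $\mathcal{F}_m^p(H)$ --- to produce the solution and identify it with the envelope; the general domain is then handled by exhaustion with smooth domains, smoothing of the boundary data, and continuity of $H_m$ along decreasing sequences. You instead bypass the $\mathcal{F}_m^p$ machinery entirely: after the formal observations (monotonicity of $\mathcal{B}_m$, the squeeze $U_m(\mu,H)\le U_m(\nu,H)\le H$ giving boundedness and the boundary limits, and the fact that $\nu\le H_m$ of a bounded function forces $\nu$ to charge no $m$-polar set), you produce \emph{some} bounded solution $g$ with data $H$ and identify $g=U_m(\nu,H)$ by the two-sided comparison ($g\in\mathcal{B}_m(\nu,H)$ gives $g\le u$; Corollary \ref{14}(1) gives $g\ge u$, using that bounded negative $m$-sh functions lie in $\mathcal{E}_m$). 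This is shorter and more transparent; what it gives up is the quantitative placement of the solution in $\mathcal{F}_m^p(H)$ that Theorem \ref{1004} provides and that the paper reuses elsewhere.

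One point needs care in your global version: the subsolution theorem of \cite{Ngo} is established for smoothly bounded strictly $m$-pseudoconvex domains with continuous boundary data, whereas Theorem \ref{1009} is stated for a general bounded $m$-pseudoconvex ($m$-hyperconvex) domain, so the one-line appeal to \cite{Ngo} does not literally apply. Your localized balayage on balls is the correct repair and is standard, but two of its steps are only sketched and should be spelled out: (i) the production, for each continuous $\phi_j\downarrow u|_{\partial B}$, of a subsolution on $B$ attaining the boundary values $\phi_j$ exactly (e.g.\ $\max(u, h_j+c_j)$ with $h_j$ the maximal function with data $\phi_j$), since the subsolution theorem requires equality of boundary limits, not just an inequality; and (ii) the verification that the decreasing limit $h=\lim_j h_{\phi_j}$ still satisfies $H_m(h)=\nu|_B$ and $\limsup_{z\to\zeta}h(z)\le u^*(\zeta)$ on $\partial B$, which is what makes the gluing $w=\max(h,u)$ on $B$, $w=u$ off $B$, an element of $\mathcal{B}_m(\nu,H)$. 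With those details supplied the proof is complete.
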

\begin{proof}
Since $\nu$ can be approximated by an increasing sequence of compactly supported measures, so, it is no restriction in considering $\nu$ compactly supported. Assume first that $\Omega$ is smoothly bounded strictly $m$-pseudoconvex, that $H\in C^{\infty}(\partial \Omega)$ and that $0\leq\nu\leq\mu$, then $B_m(\nu,H)\neq\varnothing$, so $U_m(\nu,H)+H^{-}\in\mathcal{B}_m(\nu,0),$ hence $\mathcal{B}_m(\nu,0)\neq\varnothing,$ this implies that $U_m(\nu,0)\in \mathcal{E}_{m}^{0}\subset\mathcal{E}_{m}^{p}$ and $\nu\leq H_m(U_m(\nu,0))$, however, by \cite[Lemma 3.5]{Chi2} $U_m(\nu,0)$ satisfies (\ref{1002}) for any $1\leq p\leq+\infty$, so is $\nu$. By Theorem \ref{1004} there is a uniquely determined function $\vartheta$, namely $\vartheta=U_m(\nu, H)$ and $\lim_{z\rightarrow \xi}\vartheta(z)=H(\xi)$, $\forall \xi\in \partial\Omega$.
\par Assume now that $\Omega$ is $m$-pseudoconvex and let $[\Omega_{k}]_{k=1}^{\infty}$ be an increasing sequence of smoothly bounded strictly $m$-pseudoconvex domains with $\cup^{\infty}_{k=1}\Omega_{k} =\Omega$, where supp$\nu\Subset\Omega_1$. Since each $H_k:=H\vert_{\partial\Omega_{k}}=U_m(0,H)\vert_{\partial\Omega_{k}}$ is upper semicontinuous, there exists $H_{k,j}\in C^{\infty}(\partial \Omega_{k})$ with $H_{k,j}\searrow H_k$, as $j\rightarrow+\infty.$ By the first part of the proof, there exists a uniquely determined functions $u_{k,j}\in\mathcal{SH}_{m}(\Omega)\cap L^{\infty}(\Omega_{k})$ with $H_m(u_{k,j})=\nu=U_m(\nu,H_{k,j})$ and $\lim_{z\rightarrow \xi}u_{k,j}(z)=H_{k}(\xi)$, $\forall \xi\in \partial\Omega_{k}$. Therefore, $U_m(\nu,H)\vert_{\Omega_{k}} \in \mathcal{B}_{m}(\nu,H_{k,j})$, so $U_m(\nu,H)\vert_{\Omega_{k}}\leq u_{k,j}.$ Since $u_{k,j}\searrow u_k$, as $j\rightarrow+\infty$, by continuity of the Hessian Operator $H_m(\cdot)$ for a decreasing sequences  we have $H_m(u_k)=\nu$ and $$U_m(\mu,H)\leq U_m(\nu,H)\leq u_k\leq U_m(0,H_k)=U_m(0,H)\vert_{\Omega_{k}}\; \hbox{on}\;\Omega_k.$$ 
where the last equality follows from the comparison principle. 
\par Finally, $u_{k+1}\vert_{\partial\Omega_{k}}\leq U_m(0,H_{k+1})\vert_{\partial\Omega_{k}}= U_m(0,H)\vert_{\partial\Omega_{k}}=H_k$, so $[u_{k}]_{k=1}^{\infty}$ is a decreasing sequence; then by leting $k\rightarrow+\infty$ the proof of the theorem is complete.
\end{proof}
\begin{thm}\label{121}
Let $\Omega\subset \mathbb{C}^{n}$ be a bounded $m$-hyperconvex domain. Assume that $\mu$ is a positive measure which vanishes on all $m$-polar subsets of $\Omega$ and with bounded total mass. Then for every $H\in\mathcal{MSH}_{m}(\Omega)\cap C(\overline{\Omega})$ there exists a uniquely determined function $u\in\mathcal{F}_{m}^{a}(H)$ with $H_{m}(u)=\mu.$
\end{thm}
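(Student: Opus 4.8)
The plan is to build the solution as a decreasing limit of solutions of Dirichlet problems with data in $\mathcal{E}^0_m$, using the decomposition Theorem \ref{51} and the solvability Theorem \ref{1009} of this section, and to deduce membership in $\mathcal{F}^a_m(H)$ from the finiteness of the total mass of $\mu$. Uniqueness is immediate from the Comparison Principle: if $u_1,u_2\in\mathcal{F}^a_m(H)$ both satisfy $H_m(u_i)=\mu$, then $u_1\in\mathcal{N}_m(H)$, $u_2\le H$, the measure $H_m(u_1)=\mu$ vanishes on all $m$-polar sets and $H_m(u_1)\le H_m(u_2)$, so condition (2) of Corollary \ref{14} gives $u_1\ge u_2$; interchanging $u_1$ and $u_2$ yields $u_1=u_2$.

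For existence I would start from Theorem \ref{51}: since $\mu$ vanishes on $m$-polar sets there are $\psi_j\in\mathcal{E}^0_m$ with $\mu_j:=H_m(\psi_j)\nearrow\mu$, and $\mu_j(\Omega)\le\mu(\Omega)<+\infty$ for every $j$. First I solve the problem with zero boundary data. Since $\psi_j\in\mathcal{B}_m(\mu_j,0)$, the envelope $U_m(\mu_j,0)$ is well defined, and $\psi_j\le U_m(\mu_j,0)\le0$ shows it is bounded and tends to $0$ at $\partial\Omega$; hence Theorem \ref{1009} (with $\mu_j$ dominating itself) gives $\tilde\psi_j:=U_m(\mu_j,0)\in\mathcal{E}^0_m$ with $H_m(\tilde\psi_j)=\mu_j$, and $[\tilde\psi_j]$ decreases because $[\mu_j]$ increases. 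Since $\sup_j\int_\Omega H_m(\tilde\psi_j)=\sup_j\mu_j(\Omega)\le\mu(\Omega)<+\infty$, the standard estimate bounding the $L^1(\Omega)$-norm of a function in $\mathcal{E}^0_m$ by a power of its Hessian mass (see \cite{Chi1}) shows $\tilde\psi:=\lim_j\tilde\psi_j\not\equiv-\infty$, so by definition $\tilde\psi\in\mathcal{F}_m$; moreover $\tilde\psi_j\downarrow\tilde\psi$ forces convergence in capacity, so Corollary \ref{00} gives $H_m(\tilde\psi)=\lim_j\mu_j=\mu$, and in particular $\tilde\psi\in\mathcal{F}^a_m$.

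Next I would put in the boundary data $H$. Fix $j$. As $\psi_j$ and $H$ are bounded $m$-sh, the binomial expansion of $(dd^c(\psi_j+H))^m\wedge\beta^{n-m}$ has nonnegative mixed terms, whence $H_m(\psi_j+H)\ge H_m(\psi_j)+H_m(H)=\mu_j$; also $\psi_j+H\le U_m(H_m(\psi_j+H),H)\le H$ (the lower bound since $\psi_j+H\in\mathcal{B}_m(H_m(\psi_j+H),H)$, the upper one by $m$-maximality of $H$), so $U_m(H_m(\psi_j+H),H)$ is bounded and tends to $H$ at $\partial\Omega$. Theorem \ref{1009}, applied with dominating measure $H_m(\psi_j+H)$ and dominated measure $\mu_j$, then produces $u_j:=U_m(\mu_j,H)$ with $H_m(u_j)=\mu_j$ and $H\ge u_j\ge\psi_j+H$; since $\tilde\psi_j+H$ is bounded with $H_m(\tilde\psi_j+H)\ge\mu_j$, it lies in $\mathcal{B}_m(\mu_j,H)$, so in fact $u_j\ge\tilde\psi_j+H\ge\tilde\psi+H$. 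The sequence $[u_j]$ decreases because $[\mu_j]$ increases; set $u:=\lim_j u_j$. Then $\tilde\psi+H\le u\le H$ with $\tilde\psi\in\mathcal{F}^a_m$, so $u\in\mathcal{F}^a_m(H)$. Finally $u_j,u\in\mathcal{F}_m(H)\subset\mathcal{N}_m(H)$ and $u_j\downarrow u$, so by Proposition \ref{19} (equivalently Corollary \ref{00}, using the lower bound $\tilde\psi+H\in\mathcal{E}_m$) the measures $H_m(u_j)$ converge weakly to $H_m(u)$; since $H_m(u_j)=\mu_j\nearrow\mu$ and $\mu(\Omega)<+\infty$, monotone convergence gives $H_m(u)=\mu$.

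The main difficulty is the step carried out in the zero boundary data case: showing that the decreasing limit $\tilde\psi$ (hence $u$) does not collapse to $-\infty$ and belongs to $\mathcal{F}^a_m$. This is exactly where the hypothesis $\mu(\Omega)<+\infty$ is used — through the uniform bound it forces on $\int_\Omega H_m(\tilde\psi_j)$ together with the $L^1$-estimate for $\mathcal{E}^0_m$-functions — and once this is secured the remaining monotone passages to the limit are routine given Theorems \ref{51} and \ref{1009} and the comparison and continuity results already established.
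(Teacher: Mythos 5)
Your proposal is correct and follows essentially the same route as the paper: decompose $\mu$ via Theorem \ref{51} into an increasing sequence $\mu_j=H_m(\psi_j)$ with $\psi_j\in\mathcal{E}^0_m$ and uniformly bounded masses, solve $H_m(U_m(\mu_j,H))=\mu_j$ with $H\ge U_m(\mu_j,H)\ge\psi_j+H$ via Theorem \ref{1009}, and pass to the decreasing limit using $\mu(\Omega)<+\infty$ to keep the limit in $\mathcal{F}^a_m(H)$. If anything, you are more complete than the paper, which leaves the uniqueness argument and the final identification $H_m(u)=\mu$ implicit.
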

\begin{proof}
The case $H=0$ is \cite[Theorem $1.2$]{Chi2}. For the general case, we proceed with the same argument as in \cite[Theorem $3.4$]{A}. We start with the existance part. Since $\mu$ vanishes on $m$-polar sets and has a finite total mass, it follows from Theorem \ref{51} that there exist functions $\phi\in \mathcal{E}_{m}^{0}$ and $0\leq f\in L^{1}_{Loc}\left(H_m(\phi)\right)$, such that $\mu=f H_m(\phi)$. For each $j\in\mathbb{N}$, let $\mu_{j}:=\min\{\phi,j\}H_m(\phi)$. We have $\mu_j\leq H_m(j^{1/ m}\phi)$, so by \cite{Ngo}, there exists a uniquely determined function $\psi_j\in\mathcal{E}_{m}^{0}$ such that 
\begin{equation}\label{1008}
H_m(\psi_j)=\mu_j.
\end{equation}
 This construction implies that $[\psi_j]_{j}$ is decreasing, that $\psi_j+H\in \mathcal{SH}_{m}(\Omega)\cap L^{\infty}(\Omega)$, that $\displaystyle\lim_{z\rightarrow \xi}(\psi_j+H)=H(\xi)$ for every $\xi\in \partial \Omega$, and that $U_m(H_m(\psi_j+H),H)=\psi_j+H.$ Equality (\ref{1008}) implies that $H_m(\psi_j+H)\geq \mu_j$. Therefore, by Theorem \ref{1009} there exists $U_j:=U_m(\mu_j,H)$ such that 
 \begin{equation}\label{1010}
H\geq U_j\geq \psi_j+H.
\end{equation}
Namely, $U_j\in\mathcal{E}_{m}^{0}(H)$, and $[U_j]_j$ is a decreasing sequence. Since $\mu(\Omega)<+\infty$ by assumption, it follows that
$$\sup_j\int_{\Omega}H_m(\psi_j)=\sup_j\int_{\Omega}H_m(U_j)\leq \sup_j\mu_j(\Omega)\leq \mu(\Omega)<+\infty,$$
so $\psi:=\lim_{j\rightarrow+\infty}\psi_j\in \mathcal{F}_m^{a}$. Now let $u:=\lim_{j\rightarrow+\infty}U_j$, then $u\in\mathcal{SH}_m(\Omega)$ and by letting $j\rightarrow+\infty$ in (\ref{1010}) we get $H\geq u\geq \psi+H.$ and the proof is completed.
\end{proof}
\begin{lem}\label{102}
If $H\in\mathcal{MSH}_{m}\cap L^{\infty}(\Omega)$ and $\phi\in\mathcal{E}^0_m$ with supp$(H_m(\phi))\Subset\Omega$. Then
$\exists$ $u\in\mathcal{E}^0_m(H)$ such that 
$$H_m(u)=H_m(\phi).$$
\end{lem}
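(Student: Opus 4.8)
The plan is to realise the sought function as a Perron envelope. Set $\mu:=H_m(\phi)$; since $\phi\in\mathcal{E}^0_m$ is bounded, $\mu$ has finite total mass and charges no $m$-polar set, and by hypothesis $\mathrm{supp}\,\mu\Subset\Omega$. Define
\[
u:=\Bigl(\sup\bigl\{\,v\in\mathcal{SH}_m(\Omega)\cap L^\infty(\Omega)\ :\ v\le H\text{ on }\Omega,\ H_m(v)\ge\mu\,\bigr\}\Bigr)^{*},
\]
the inequality $v\le H$ playing the role of the boundary condition, which is legitimate because $H$ is $m$-maximal. This family is non-empty: $\phi+H$ is $m$-sh and bounded, $\phi+H\le H$ since $\phi\le 0$, and expanding $(dd^c(\phi+H))^m$ by the binomial formula all terms are non-negative currents, so $H_m(\phi+H)\ge(dd^c\phi)^m\wedge\beta^{n-m}=\mu$. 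Hence $\phi+H\le u\le H$, so $u$ is bounded, and once we show $H_m(u)=\mu$ the sandwich $H\ge u\ge\phi+H$ with $\phi\in\mathcal{E}^0_m$ is precisely the assertion $u\in\mathcal{E}^0_m(H)$.

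Next I would check that $u$ itself lies in the family. By Choquet's lemma there are $v_j$ in the family with $(\sup_j v_j)^{*}=u$; replacing $v_j$ by $\max(v_1,\dots,v_j,\phi+H)$ — still in the family, a maximum of $m$-sh functions being $m$-sh and $H_m(\max(v,v'))\ge\mu$ whenever $H_m(v),H_m(v')\ge\mu$ and $\mu$ charges no $m$-polar set (as in the construction of $U_m(\mu,H)$ recalled above) — one may assume $\phi+H\le v_j\uparrow u$ q.e.\ with the $v_j$ uniformly bounded. Then $u\in\mathcal{SH}_m(\Omega)$, and by continuity of the Hessian operator along uniformly bounded increasing sequences $H_m(v_j)\to H_m(u)$ weakly; testing against non-negative $\chi\in C_0^\infty(\Omega)$ gives $\int\chi\,H_m(u)=\lim_j\int\chi\,H_m(v_j)\ge\int\chi\,\mu$, so $H_m(u)\ge\mu$.

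The heart of the argument is the reverse inequality $H_m(u)\le\mu$. First, $u$ is $m$-maximal on $\Omega\setminus\mathrm{supp}\,\mu$: for a ball $B\Subset\Omega\setminus\mathrm{supp}\,\mu$ let $g$ be the maximal $m$-sh function in $B$ with boundary values $u|_{\partial B}$; then $g\ge u$ on $B$, and, using that $m$-maximality of $H$ forces $H|_B$ to be the Perron envelope of $H|_{\partial B}$ together with $u\le H$, monotonicity of Perron envelopes gives $g\le H$ on $B$; gluing $g$ on $B$ to $u$ on $\Omega\setminus B$ by the maximum produces a member of the family $\ge u$, hence $=u$, so $H_m(u)=0$ on $B$. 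Thus $H_m(u)=0$ off $\mathrm{supp}\,\mu$. Then I fix a smoothly bounded strictly $m$-pseudoconvex $\Omega'$ with $\mathrm{supp}\,\mu\Subset\Omega'\Subset\Omega$, and invoke the local solvability of the Dirichlet problem for $H_m$ (\cite{Ngo}, \cite{Di-Ko}): taking as subsolution the sum of a solution of $H_m(\psi_0)=\mu$ on $\Omega'$ (Theorem~\ref{121} on $\Omega'$ with zero boundary data) and a continuous $m$-sh extension of the boundary data, and then letting continuous boundary data decrease to the upper semicontinuous trace $u|_{\partial\Omega'}$, one obtains $w\in\mathcal{SH}_m(\Omega')\cap L^\infty(\Omega')$ with $H_m(w)=\mu$ on $\Omega'$ and sharing the boundary values of $u$ on $\partial\Omega'$. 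By the comparison principle (Corollary~\ref{14}), $w\ge u$ and $w\le H$ on $\Omega'$; gluing $w$ on $\Omega'$ to $u$ on $\Omega\setminus\Omega'$ by the maximum gives again a member of the family lying both above and below $u$, so $w=u$ on $\Omega'$ and $H_m(u)=\mu$ there. Together with $H_m(u)=0$ off $\mathrm{supp}\,\mu$ this gives $H_m(u)=\mu$ on $\Omega$.

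\emph{Where the difficulty lies.} The first two steps are routine. The delicate point is the last step: solving the auxiliary Dirichlet problem on $\Omega'$ with merely upper semicontinuous boundary data $u|_{\partial\Omega'}$ (handled by monotone approximation), and verifying that the various gluings are $m$-sh and create no spurious Hessian mass on $\partial B$ or $\partial\Omega'$; both rely on the comparison principle applied to the $m$-maximal function $H$. It is exactly here that the hypothesis $\mathrm{supp}\,\mu\Subset\Omega$ is essential, since it confines the measure away from $\partial\Omega$, where $H$ is assumed only bounded and not continuous.
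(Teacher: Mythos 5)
Your route is genuinely different from the paper's. The paper approximates $H$ and $\phi$ from above by continuous functions $H_k,\phi_k\in\mathcal{E}^0_m\cap C(\overline\Omega)$, solves the Dirichlet problems $u_{j,k}=U_m\bigl(H_m(\phi)\vert_{\Omega_j},\phi_k+H^j_k\bigr)$ over a fundamental sequence $[\Omega_j]$, glues $\max(u_{j,k},\phi_k+H^j_k)$ on $\overline\Omega_j$ with $\phi_k+H^j_k$ outside, and then runs a double limit ($k\to\infty$ decreasing, $j\to\infty$ increasing) controlled by two monotonicity facts. You instead take a single Perron envelope $\{v\le H,\ H_m(v)\ge\mu\}$ and identify its Hessian measure by balayage off $\mathrm{supp}\,\mu$ plus an auxiliary Dirichlet problem near $\mathrm{supp}\,\mu$. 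Your first two steps (non-emptiness via $H_m(\phi+H)\ge H_m(\phi)$, stability under $\max$ since $\mu$ charges no $m$-polar set, and $H_m(u)\ge\mu$ by increasing q.e.\ convergence) are sound and match tools the paper itself uses when it checks $U_m(\mu,H)\in\mathcal{B}_m(\mu,H)$.

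There is, however, one concrete gap in the last step: the assertion that ``by the comparison principle (Corollary \ref{14}), $w\le H$ on $\Omega'$'' (and likewise $g\le H$ on $B$ in the balayage step). Condition (1) of Corollary \ref{14} requires $\liminf_{z\to\zeta}\bigl(H(z)-w(z)\bigr)\ge0$ for $\zeta\in\partial\Omega'$, and since $H$ is only bounded and upper semicontinuous you cannot derive this from the pointwise inequality $\limsup_{z\to\zeta}w(z)\le u(\zeta)\le H(\zeta)$: nothing prevents $\liminf_{z\to\zeta}H(z)<H(\zeta)$. Condition (2) is not applicable either, as $w$ lives only on $\Omega'$. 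The repair is to reverse the order of operations: first glue ($\tilde u:=w$ on $\Omega'$, $\tilde u:=u$ on $\Omega\setminus\Omega'$ is $m$-sh because $\limsup_{\Omega'\ni z\to\partial\Omega'}w\le u$, and satisfies $H_m(\tilde u)\ge\mu$), then observe that $\tilde u=u\le H$ outside the compact set $\overline{\Omega'}$, so the very definition of $m$-maximality of $H$ forces $\tilde u\le H$ on all of $\Omega$; hence $\tilde u$ lies in the defining family, $\tilde u\le u$, and $w=u$ on $\Omega'$. The same device handles $g\le H$ on the ball $B$. A second, smaller imprecision: you do not need (and cannot in general guarantee) that $w$ ``shares the boundary values of $u$ on $\partial\Omega'$''; all that is used is $\limsup_{\Omega'\ni z\to\xi}w(z)\le u(\xi)$, which does follow from $w\le w_k$ and $w_k\to h_k\ge u$ on $\partial\Omega'$. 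With these repairs the argument closes, and it is arguably cleaner than the paper's double-limit construction, at the price of the balayage and auxiliary-Dirichlet machinery on $\Omega'$.
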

\begin{proof}
Let $H_k,\phi_k\in\phi\in\mathcal{E}^0_m\cap C(\overline{\Omega})$ such that $H_k$ decreases to $H$ and $\phi_k$ to $\phi$, and let $[\Omega_j]$ be a fondamental sequence such that Supp$(H_m(\phi))\Subset\Omega_1\Subset\ldots\Subset\Omega_j\Subset\ldots\Subset\Omega$. Note firstly that by Theorem \ref{121} there exists $u_{j,k}:=U_m(\mu\vert_{\Omega_j},\phi_k+H^{j}_k).$ Define
\begin{center}
$\left\{
  \begin{array}{ll}
    \Psi_{j,k}=\max(u_{j,k},\phi_k+H^{j}_k), & \hbox{on}\; \overline{\Omega}_j, \\
    \Psi_{j,k}=\phi_k+H^{j}_k, &\hbox{on}\;\Omega\setminus\overline{\Omega}_j.
  \end{array}
\right.
$
 \end{center}
Then $\Psi_{j,k}\in \mathcal{SH}^{-}_{m}(\Omega)$ and  $H^{j}_k\geq \Psi_{j,k}\geq H^{j}_k+\phi_k.$ Let $\psi_j:=\displaystyle\lim_{k\rightarrow+\infty}\Psi_{j,k}.$
Since $\Psi_{j,k+1}\leq\Psi_{j,k}$ and $u_{j,k+1}\leq u_{j,k}$ on $\Omega_j,$ then $H\geq \psi_{j}\geq H+\phi.$
Now define $u_j:=\displaystyle\lim_{k\rightarrow+\infty}u_{j,k}$ on $\Omega_j$,
We have the following key facts:
\textit{FACT 1}. $u_j=\psi_j$ on $\Omega_j.$ Indeed: it is clear that $\psi_j\geq u_j$, however we have that
$$\left\{
  \begin{array}{ll}
    \displaystyle\limsup_{z\rightarrow \xi}(\phi+H)\leq u_{j,k}=\phi_k+H_k, & \hbox{on}\; \partial\Omega_j, \\
    H_m(\phi+H)\geq H_m(u_{j,k})=H_m(\phi), &\hbox{on}\;\Omega_j
  \end{array}
\right.
$$
Hence, $u_j\geq \phi+H$ on $\Omega_j$ and the \textit{FACT 1} is proved, In particular, $H_m(\psi_j)|_{\Omega_j}=H_m(\phi)$\\
\textit{FACT 2}. $\psi_{j+1}\geq\psi_{j}$ on $\Omega$ indeed,
$$\left\{
  \begin{array}{ll}
    \psi_{j+1}= \psi_{j}=\phi+H, & \hbox{on}\; \Omega\setminus\Omega_{j+1}, \\
    \psi_{j+1}= u_{j+1}, &\hbox{on}\;\Omega_{j+1}\;\;\; \hbox{(by\;the\;\textit{FACT 1})}
  \end{array}
\right.
$$
So we have $H_m(\psi_{j+1})=H_m(\phi)$ on $\Omega_{j+1}$, $H_m(\psi_j)\geq H_m(\phi)$ on $\Omega_{j+1},$ and
$\psi_{j+1}=u_{j+1}\geq\phi+H=\psi_j$ on $\Omega_{j+1}\cap\Omega\setminus\Omega_j.$
Hence, $\psi_{j+1}\geq \psi_{j}$ on $\Omega_{j+1}$, so $\psi_{j+1}\geq \psi_{j}$ on $\Omega$. Put now $u=(\displaystyle\lim_{j\rightarrow+ \infty}\psi_j)^{*}$. Then $H_m(u)=H_m(\phi)$ and $H\geq u\geq H+\phi.$
\end{proof}
\begin{thm}\label{21}
Suppose $\mu=H_m(v)$ where $v\in \mathcal{N}_m^a$. Then, to every $H\in\mathcal{MSH}_{m}\cap L^{\infty}(\Omega)$, there is a uniquely determined function $\varphi\in \mathcal{N}^a_m(H)$ with $\mu=H_m(\varphi)$.
\end{thm}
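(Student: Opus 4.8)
The plan is to build $\varphi$ as a decreasing limit of solutions associated with well-behaved compactly supported pieces of $\mu$, using Lemma \ref{102} together with the comparison principle, and to read off uniqueness directly from the comparison principle. Since $v\in\mathcal N^a_m$, the measure $\mu=H_m(v)$ vanishes on all $m$-polar subsets of $\Omega$. Note that $\mu(\Omega)$ may be infinite, so Theorem \ref{121} cannot be invoked directly; instead I would exhaust $\mu$ as follows. By Theorem \ref{51} there is an increasing sequence $H_m(u_j)\nearrow\mu$ with $u_j\in\mathcal E^0_m$; in particular $H_m(u_j)\le\mu$, so each $H_m(u_j)$ vanishes on $m$-polar sets. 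Fixing the fundamental sequence $[\Omega_j]$, put $\nu_j:=\chi_{\Omega_j}H_m(u_j)$. Then $[\nu_j]$ is increasing, $\mathrm{supp}\,\nu_j\subseteq\overline{\Omega_j}\Subset\Omega$, each $\nu_j$ is a finite positive measure vanishing on $m$-polar sets, and $\nu_j\nearrow\mu$ by monotone convergence. Since $\nu_j$ is dominated by $H_m(u_j)$ with $u_j\in\mathcal E^0_m$, Theorem \ref{1009} (with $H\equiv0$) provides $\phi_j\in\mathcal E^0_m$ with $H_m(\phi_j)=\nu_j$, and of course $\mathrm{supp}\,H_m(\phi_j)\Subset\Omega$.

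Next I would solve each piece with boundary value $H$. Applying Lemma \ref{102} to $H$ and $\phi_j$ gives $w_j\in\mathcal E^0_m(H)\subseteq\mathcal N_m(H)$ with $H_m(w_j)=H_m(\phi_j)=\nu_j$. Because $H_m(w_j)=\nu_j$ vanishes on $m$-polar sets, $w_{j+1}\le H$, and $H_m(w_j)=\nu_j\le\nu_{j+1}=H_m(w_{j+1})$, the comparison principle (Corollary \ref{14}(2), with $u=w_j$ and $v=w_{j+1}$) yields $w_j\ge w_{j+1}$, so $[w_j]$ is decreasing. Moreover $v+H\in\mathcal E_m$, $v+H\le H$, and since $H$ is $m$-sh one has $H_m(v+H)\ge H_m(v)=\mu\ge\nu_j=H_m(w_j)$; applying Corollary \ref{14}(2) once more, now with $u=w_j$ and $v=v+H$, gives $w_j\ge v+H$ for every $j$. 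Hence $\varphi:=\lim_{j\to+\infty}w_j$ satisfies $H\ge\varphi\ge v+H$; being squeezed between two functions of $\mathcal E_m$ it lies in $\mathcal E_m$, and since $v\in\mathcal N^a_m$ it will lie in $\mathcal N^a_m(H)$ once we know $H_m(\varphi)=\mu$ (its Hessian measure then being $\mu$, which vanishes on $m$-polar sets).

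It remains to identify $H_m(\varphi)$. Since $[w_j]$ decreases in $\mathcal N_m(H)$ to $\varphi\in\mathcal N_m(H)$, Proposition \ref{19}, applied with the continuous strictly negative $m$-sh exhaustion function furnished by the $m$-hyperconvexity of $\Omega$ as weight, shows that $H_m(w_j)$ converges weakly to $H_m(\varphi)$. On the other hand $H_m(w_j)=\nu_j$ converges weakly to $\mu$, whence $H_m(\varphi)=\mu$, which completes the existence part. For uniqueness, suppose $\varphi_1,\varphi_2\in\mathcal N^a_m(H)$ both satisfy $H_m(\varphi_i)=\mu$. Then $H_m(\varphi_1)=\mu$ vanishes on $m$-polar sets, $\varphi_2\le H$, and $H_m(\varphi_1)=H_m(\varphi_2)$, so Corollary \ref{14}(2) gives $\varphi_1\ge\varphi_2$; exchanging the two functions gives $\varphi_2\ge\varphi_1$, hence $\varphi_1=\varphi_2$ (once $\varphi_1\le\varphi_2$ is known one may also invoke the identity principle, Theorem \ref{31}).

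The step I expect to be the main obstacle is producing the uniform lower bound $w_j\ge v+H$ — this is precisely what forces the limit into $\mathcal N^a_m(H)$ rather than merely into $\mathcal E_m$ — together with the passage to the limit $H_m(\varphi)=\lim_j H_m(w_j)$; the remaining steps are a careful assembly of Theorems \ref{51} and \ref{1009}, Lemma \ref{102}, and the comparison principle, with attention paid to which functions genuinely belong to $\mathcal E_m$ (in particular $H$ and $v+H$) so that Corollary \ref{14} and Proposition \ref{19} apply.
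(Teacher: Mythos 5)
Your proposal is correct and follows essentially the same route as the paper: truncate $\mu$ (via Theorem \ref{51}) into an increasing sequence of finite, compactly supported measures vanishing on $m$-polar sets, solve each piece in $\mathcal{E}^0_m$, lift to boundary data $H$ with Lemma \ref{102}, use the comparison principle to get a decreasing sequence squeezed below $H$ and above $v+H$, and pass to the limit, with uniqueness again from Corollary \ref{14}. The only (immaterial) differences are that the paper truncates by $\chi_{\Omega_k}\inf(f,k)H_m(g)$ and solves the pieces via the bounded subsolution theorem, where you use the increasing sequence $H_m(u_j)$ from Theorem \ref{51} cut off to $\Omega_j$ and Theorem \ref{1009}, and that the paper obtains the lower bound as $u_k\geq\psi_k+H\geq v+H$ rather than by comparing $w_j$ with $v+H$ directly.
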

\begin{proof}
By Theorem \ref{51} there exists $g\in\mathcal{E}^0_m$ and a function $0\leq f \in L^{1}_{Loc}\left(H_m(\phi)\right)$ such that $\mu= fH_m(g).$
Consider $\mu_k=\chi_{\Omega_{k}}\inf(f,k)H_m(g)$ then thers exists  $\psi_k\in\mathcal{E}^{0}_{m}\subset\mathcal{N}^{a}_{m}$ such that $H_m(\psi_k)=\mu_k,$ then by Lemma \ref{102} we can find $u_k\in\mathcal{E}_{m}^{0}(H)$ such that $H_m(u_k)=\mu_k,$
where $[u_k]_k$ is a decreasing sequence with $H\geq u_k\geq \psi_k+H$. Sine $\psi_k\geq v$ (by Corollary \ref{14}), then $H\geq u_k\geq v +H$ This estabilish the existance part of a solution. the uniquenes is due to Corollary \ref{14}.
\end{proof}
We give now a solution for the Dirichlet problem for measure which is written as the Hessian measure of function belonging to class $\mathcal{N}_m^{a}$. 
\begin{thm}\label{52}
Assume that $\mu$ is a non-negative measure defined on $\Omega$ by $\mu=H_m(\varphi)$, $\varphi \in \mathcal{N}^a_m$. Then for every function $H \in \mathcal{E}_m$ such that $H_m(H)\leq \mu,$ there exists a  uniquely determined function  $u\in \mathcal{N}_m(H)$ such that $H_m(u)=\mu$ on $\Omega$.
\end{thm}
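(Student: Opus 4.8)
The plan is to obtain $u$ as the upper envelope of the $m$-sh subsolutions lying below $H$, and then to show that this envelope is a genuine solution belonging to $\mathcal{N}_m(H)$. Uniqueness is the easy half: since $\mu=H_m(\varphi)$ with $\varphi\in\mathcal{N}^a_m$, the measure $\mu$ carries no mass on $m$-polar sets, so if $u_1,u_2\in\mathcal{N}_m(H)$ both satisfy $H_m(u_i)=\mu$, then Corollary \ref{14}$(2)$ applied to the pair $(u_1,u_2)$ --- note $u_2\le H$ and $H_m(u_1)=H_m(u_2)$ --- gives $u_1\ge u_2$, and by symmetry $u_1=u_2$.

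For existence, set
$$\mathcal{C}:=\bigl\{w\in\mathcal{SH}^{-}_m(\Omega):\ w\le H\ \text{ on }\ \Omega\ \text{ and }\ H_m(w)\ge\mu\bigr\},\qquad u:=\Bigl(\sup_{w\in\mathcal{C}}w\Bigr)^{*}.$$
First I would check that $\mathcal{C}\neq\varnothing$ and that $u\in\mathcal{N}_m(H)$. Indeed $\varphi+H\in\mathcal{E}_m$, $\varphi+H\le H$, and expanding the mixed Hessian,
$$H_m(\varphi+H)=\sum_{l=0}^{m}\binom{m}{l}(dd^c\varphi)^{l}\wedge(dd^cH)^{m-l}\wedge\beta^{n-m}\ \ge\ (dd^c\varphi)^{m}\wedge\beta^{n-m}=\mu,$$
because every mixed term is a nonnegative current; hence $\varphi+H\in\mathcal{C}$. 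Consequently $\varphi+H\le u$, so $u\in\mathcal{E}_m$, while $u\le H$ since every competitor is $\le H$ and $H$ is upper semicontinuous. Thus $\varphi+H\le u\le H$, i.e. $u\in\mathcal{N}_m(H)$ (and even $u\in\mathcal{N}^a_m(H)$ once $H_m(u)=\mu$ is known, $\mu$ being $m$-polar free). Moreover $\mathcal{C}$ is stable under finite maxima --- $\max(w_1,w_2)\le H$ and $H_m(\max(w_1,w_2))\ge\mu$ by Lemma \ref{28}$(2)$ extended to $\mathcal{E}_m$ (a routine approximation, valid since $\mu$ charges no $m$-polar set) --- so by Choquet's lemma there is an increasing sequence $w_j\in\mathcal{C}$ with $(\sup_j w_j)^{*}=u$. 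As $\varphi+H\le w_j\le H$ for every $j$, the $w_j$ converge to $u$ in $\tilde{C}_m$-capacity, and Corollary \ref{00} then gives $H_m(u)=\lim_j H_m(w_j)\ge\mu$.

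The hard part will be the reverse inequality $H_m(u)\le\mu$, and it is precisely there that the hypothesis $H_m(H)\le\mu$ enters. I would argue by a local balayage: fix a ball $B\Subset\Omega$, note that $\mu\vert_B$ has finite mass and is dominated by $H_m(\varphi)\vert_B$, and consider the solution $v$ on $B$ of $H_m(v)=\mu\vert_B$ with boundary data $u\vert_{\partial B}$ (obtained from the Dirichlet theory of Section \ref{77772}, Theorems \ref{1004} and \ref{1009}, after the usual reductions). By the comparison principle on $B$ one gets $v\ge u$ on $B$ --- since $u$ itself is a subsolution there, with $H_m(u)\ge\mu$ and the same boundary values --- and $v\le H$ on $B$ --- since $H_m(v)=\mu\ge H_m(H)$ on $B$ while $v=u\le H$ on $\partial B$. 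Gluing $v$ on $B$ with $u$ on $\Omega\setminus B$ produces $\hat v\in\mathcal{SH}^{-}_m(\Omega)$ with $\hat v\le H$ and $H_m(\hat v)\ge\mu$, i.e. $\hat v\in\mathcal{C}$, whence $\hat v\le u$; but $\hat v=v\ge u$ on $B$, so $\hat v=u$ on $B$ and therefore $H_m(u)=H_m(v)=\mu$ on $B$. Since $B\Subset\Omega$ was arbitrary, $H_m(u)=\mu$ on $\Omega$.

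I expect the genuine difficulty to be confined to this last step --- making the sweeping-out rigorous, namely solving the auxiliary Dirichlet problem on $B$ with the merely $m$-sh (possibly $-\infty$) datum $u\vert_{\partial B}$, keeping $v$ in a class to which Corollary \ref{14} applies, and verifying that the swept function stays below $H$. This last verification is the exact point where $H_m(H)\le\mu$ does its work: it is what forces the balayage back under the obstacle $H$ and so keeps the whole construction inside $\mathcal{N}_m(H)$. The rest is a routine use of the comparison and identity principles together with the approximation results already established in the paper.
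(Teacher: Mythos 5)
Your uniqueness argument and the first half of the existence argument (the envelope $u$ lies in $\mathcal{N}_m(H)$ because $\varphi+H$ is a competitor, and $H_m(u)\geq\mu$ via Choquet, Lemma \ref{28}(2) and Corollary \ref{00}) are essentially sound, and your route — a Perron envelope followed by balayage — is genuinely different from the paper's, which instead exhausts $\Omega$ by $\Omega_j$, truncates $\mu$ to $\mu_j=\chi_{\Omega_j}\mu$, solves on each $\Omega_j$ with the \emph{continuous, maximal} data $H^j_k$ built from an approximating sequence $H_k\in\mathcal{E}_m^0\cap C(\overline{\Omega})$ (via Theorem \ref{21}, which rests on Lemma \ref{102} and Theorem \ref{121}), and then passes to a double monotone limit. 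The point of that detour is precisely to avoid ever solving a Dirichlet problem with non-continuous boundary data.

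That is where your proof has a genuine gap: the reverse inequality $H_m(u)\leq\mu$ is entirely delegated to an auxiliary Dirichlet problem on a ball $B$ with boundary datum $u\vert_{\partial B}$, and you acknowledge but do not close this step. Nothing in the paper (Theorems \ref{1004}, \ref{1009}, \ref{121}) produces a solution with merely $m$-subharmonic, possibly $-\infty$-valued boundary values; constructing $v=U_m(\mu\vert_B,u\vert_{\partial B})$ in a class where Corollary \ref{14} applies \emph{is} the hard content of the theorem, not a technicality. Two further points in the balayage would fail as written even granting $v$: (i) $\partial B$ is not $m$-polar, so $\mu(\partial B)$ may be positive, and the glued function $\hat v$ gives no control of $H_m(\hat v)$ on $\partial B$; hence $\hat v\in\mathcal{C}$ is not established (one would at least have to choose radii with $\mu(\partial B)=0$ and argue on slightly larger balls); and (ii) the claim $v\leq H$ on $B$ by comparison needs $\liminf_{z\to\partial B}\left[H(z)-v(z)\right]\geq 0$, which does not follow from ``$v=u\leq H$ on $\partial B$'' when $H\in\mathcal{E}_m$ is merely upper semicontinuous. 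To make your scheme work you would essentially have to re-prove, for singular boundary data, the solvability results that the paper obtains only for continuous data — at which point the paper's exhaustion-by-$\Omega_j$ argument is the shorter path.
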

\begin{proof}
The uniqueness is due to Corollary \ref{14}. For existance we proceed as follows:
Since $H \in\mathcal{E}_m,$ \cite[Theorem 3.1]{Chi2} implies that there exists a decreasing sequene $[H_k], H_k \in \mathcal{E}_m^0\cap C(\bar\Omega)$, that converges pointwise to  $H$, as $k\rightarrow +\infty.$
Let $[\Omega_{j}]$ be the fondamental sequence of $\Omega$. For each  $j, k \in\mathbb{N}$, define $H^j_k$ as in Definition \ref{120}, then $H^j_k\in \mathcal{E}_m^0$ and $H^j_k$ is $m$-maximal on $\Omega_j.$ Let  $\mu_j=\chi_{\Omega_j}\mu,$ for each $j\in \mathbb{N},$  $\mu_j$ is a Boral measure compactly supported on $\Omega$ that puts no mass on  $m$-polar  sets and we have that $\mu_j(\Omega_j)<\mu_j(\Omega)<+\infty.$ Hence,  by  \cite[Theorem 1.2]{Chi2} there exists a uniquely determined function $\varphi_j\in\mathcal{F}^{a}_m(\Omega_j)\subset\mathcal{N}^{a}_m(\Omega_j)$ such that
$H_m(\varphi_j)=\mu_j$ on $\Omega_j$. Moreover, from Theorem \ref{21}, there exists a sequence $u_{j,k} \in \mathcal{F}^{a}_m(\Omega_j,H^j_k)$
such that $H_m(u_{j,k})=\mu_j$ on $\Omega_j.$ However
  $H_m(u_{j,k})= H_m(\varphi_j)\leq H_m(\varphi_j+H_k^j),$
then by Corollary \ref{14} we have that
$
   H^j_k\geq u_{j,k}\geq \varphi_j+H^j_k \;\;\; \hbox{on}\;\;\Omega_j,
$
and $[u_{j,k}]^\infty_{k=1}$ is a decreasing sequence. Let $k\rightarrow+\infty,$ then $H^j\geq u_{j}\geq \varphi_j+H^j$ on $\Omega_j,$
$(i.e)$ $u_j \in \mathcal{F}_m(\Omega, H^j)\subset \mathcal{N}_m(\Omega, H^j)$. Since $H_m(H)\leq \mu,$ we get
        $H_m(u_j)=\mu_j=\chi_{\Omega_j}\mu\geq H_m(H)=0 \;\; \hbox{on}\;\; \Omega_j,$
Thus, Corollary \ref{14} implies again that $u_j\leq H$ on $\Omega_j.$  On the other hand, by the construction of $\mu_j$ and by the fact that $[\Omega_j]$ is an increasing sequence  we have that
$H_m(u_j)=H_m(u_{j+1})$ on $\Omega_j$. Then  $[u_j]$ is decreasing and we get that $H\geq u_j\geq \varphi_j+H^{j}\geq \varphi_j+H$ on $\Omega_j.$
Thus,  $u:=(\displaystyle\lim_{j\rightarrow+\infty}u_j) \in \mathcal{N}_m(\Omega, H)$ with $H_m(u)=\mu.$
\end{proof}
\par The next Proposition will be useful in section 4. Note that if $\mu=H_m(v)$ for some $v\in\mathcal{E}_m^{a},$ then we define $U_m(\mu,0)$ to be $\lim_jU_m(\chi_{\Omega_j}\mu,0),$ in this case we affirm that $U_m(\mu,0)\in\mathcal{N}_m^{a},$ and $U_m(\mu,0)\geq v.$  
\begin{pro}\label{1013}
Let $\mu$ be a non-negative measure defined on $\Omega$ such that it vanishes on $m$-polar sets of $\Omega$ and that there exists a function $\varphi\in \mathcal{E}_m$, $\varphi\neq 0$ such that $\displaystyle\int_\Omega\varphi d\mu>-\infty.$ Then $$U_m(\mu,0)\in \mathcal{N}^{a}_{m} \;\;\hbox{and}\;\; H_m(U_m(\mu,0))=\mu.$$
\end{pro}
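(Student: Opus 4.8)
The plan is to adapt the exhaustion argument of \cite{ACCH}: approximate $\mu$ from below by compactly supported finite measures, solve the corresponding Dirichlet problems, and pass to the decreasing limit, the integrability of $\varphi$ providing the control needed near $\partial\Omega$.

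First I would record that $\mu$ is automatically a Radon measure: since $\Omega$ is connected and $\varphi\in\mathcal{E}_m$ with $\varphi\ne0$, the strong maximum principle gives $\varphi<0$ on $\Omega$, hence $\sup_K\varphi<0$ for every compact $K\Subset\Omega$, so $\mu(K)\le(-\sup_K\varphi)^{-1}\int_\Omega(-\varphi)\,d\mu<\infty$. Fixing a fundamental sequence $[\Omega_j]$, set $\mu_j:=\chi_{\Omega_j}\mu$; each $\mu_j$ is compactly supported, has finite total mass, vanishes on $m$-polar sets, and $\mu_j\uparrow\mu$. By Theorem \ref{121} (case $H=0$) there is a unique $u_j\in\mathcal{F}_m^a$ with $H_m(u_j)=\mu_j$, and $u_j=U_m(\mu_j,0)$, the maximal element of $\mathcal{B}_m(\mu_j,0)$. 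Because $\mu_j\le\mu_{j+1}$ and $H_m(u_j)$ vanishes on $m$-polar sets, Corollary \ref{14} (condition (2), with $H=0$) yields $u_{j+1}\le u_j$; so $u:=\lim_j u_j$ is well defined, and by the convention of the remark preceding the statement $u=U_m(\mu,0)$.

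The second step is to show $u\not\equiv-\infty$. After replacing $\varphi$ by $\max(\widetilde\varphi,-1)$, where $\widetilde\varphi\in\mathcal{F}_m$ is obtained from [\cite{Chi1}, Remark 1.7.6] with $\widetilde\varphi\ge\varphi$ and $\widetilde\varphi=\varphi$ on a fixed ball $W\Subset\Omega$, I may assume $\varphi\in\mathcal{F}_m\cap L^\infty$ (hence $\varphi\in\mathcal{F}_m^1$), still with $\varphi\ne0$ and $\int_\Omega(-\varphi)\,d\mu<\infty$. Since $u_j,\varphi\in\mathcal{F}_m$, integration by parts (Stokes' theorem in $\mathcal{F}_m$) together with the Cegrell-type inequality of Theorem \ref{40} (weight $-\varphi$, the $m$ functions being $u_j,\varphi,\dots,\varphi$) give
$$\int_\Omega(-u_j)\,H_m(\varphi)=\int_\Omega(-\varphi)\,dd^cu_j\wedge(dd^c\varphi)^{m-1}\wedge\beta^{n-m}\le\Big(\int_\Omega(-\varphi)\,d\mu\Big)^{1/m}\Big(\int_\Omega(-\varphi)H_m(\varphi)\Big)^{(m-1)/m}=:C,$$
which is finite and independent of $j$ because $H_m(u_j)=\mu_j\le\mu$ and $\varphi\in\mathcal{F}_m^1$. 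As $\varphi\ne0$ forces $\int_\Omega H_m(\varphi)>0$, monotone convergence rules out $u\equiv-\infty$; hence $u\in\mathcal{SH}_m^-(\Omega)$, and being a decreasing limit of functions of $\mathcal{E}_m$ it lies in $\mathcal{E}_m$. Then $u_j\downarrow u$ in $\mathcal{E}_m$ gives $H_m(u_j)\to H_m(u)$ weakly, and since $\mu_j\uparrow\mu$ this forces $H_m(u)=\mu$; as $\mu$ puts no mass on $m$-polar sets, $u\in\mathcal{E}_m^a$.

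It remains to prove $u\in\mathcal{N}_m$, i.e. $\widetilde u=0$, and this is the step I expect to be the main obstacle. One inequality is free: $u\le u_j\in\mathcal{F}_m\subset\mathcal{N}_m$ gives $\widetilde u\le\widetilde{u_j}=0$. For $\widetilde u\ge0$ I would work with the maximal majorants $u^k$ of Definition \ref{120}, first identifying $u^k=\lim_j(u_j)^k$ and then showing $u^k\uparrow0$ quasi-everywhere. The essential input is that each $\mu_j$ is \emph{compactly supported}, so $u_j=U_m(\mu_j,0)$ is $m$-maximal on $\Omega\setminus\overline\Omega_j$ and the $(u_j)^k$ are genuine maximal majorants, combined with the uniform bound $\sup_j\int_\Omega(-u_j)H_m(\varphi)\le C$ from the second step, which forbids the ``energy'' of the $u_j$ from escaping to $\partial\Omega$ and supplies the lower bound forcing $u^k\uparrow0$. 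This is precisely where the hypothesis $\int_\Omega\varphi\,d\mu>-\infty$ enters, and it mirrors the corresponding argument in \cite{ACCH}; note that some care is needed here since $\mathcal{N}_m$ is not closed under arbitrary decreasing limits. Once $\widetilde u=0$ is established we conclude $U_m(\mu,0)=u\in\mathcal{N}_m^a$ with $H_m(U_m(\mu,0))=\mu$.
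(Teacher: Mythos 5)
Your set-up (exhaustion by $\mu_j=\chi_{\Omega_j}\mu$, solving $H_m(u_j)=\mu_j$ with $u_j\in\mathcal{F}_m^a$ decreasing, and an energy bound via Theorem \ref{40}) matches the paper's, and your observation that $\mu$ is automatically locally finite is correct. But there are two genuine gaps. First, the assertion ``being a decreasing limit of functions of $\mathcal{E}_m$ it lies in $\mathcal{E}_m$'' is false: $\mathcal{E}_m$ is not closed under decreasing limits (by Proposition \ref{0} \emph{every} function of $\mathcal{SH}_m^-(\Omega)$ is a decreasing limit of functions of $\mathcal{E}_m^0$). Membership in $\mathcal{E}_m$ requires, near each point, approximants from $\mathcal{E}_m^0$ with \emph{uniformly bounded global} Hessian masses, and neither the local bound $\mu_j(W)\le\mu(W)$ nor your estimate $\sup_j\int(-u_j)H_m(\varphi')\le C$ supplies these. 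The paper gets them by localizing: for $W\Subset\Omega$ it sets $U_j=\sup\{\psi\in\mathcal{SH}_m(\Omega):\psi|_W\le U_m(\mu_j,0)|_W\}\in\mathcal{E}_m^0$, notes that $H_m(U_j)$ is supported in $\overline W$, and bounds $\int_\Omega H_m(U_j)\le\bigl(\inf_W(-\varphi)\bigr)^{-1}\int_\Omega(-\varphi)H_m(U_j)\le\bigl(\inf_W(-\varphi)\bigr)^{-1}\int_\Omega(-\varphi)\,d\mu$ using Lemma \ref{18}. This localization, not merely the non-vanishing of $u$, is where the hypothesis is used to obtain $u\in\mathcal{E}_m$.

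Second, the crucial claim $\widetilde{u}=0$ is only sketched; ``the uniform energy bound forbids the energy from escaping to the boundary'' is a heuristic, not an argument, and you acknowledge as much. The paper's proof of this step uses a specific mechanism you do not reproduce: from $H_m(U_m(\mu_j,0))+H_m(U_m((1-\chi_j)\mu,0))\ge\mu$ and the comparison principle one gets $U_m(\mu_j,0)+U_m((1-\chi_j)\mu,0)\le U_m(\mu,0)$; superadditivity of the tilde operation together with $\widetilde{U_m(\mu_j,0)}=0$ (since $U_m(\mu_j,0)\in\mathcal{F}_m^a$) yields $U_m((1-\chi_j)\mu,0)\le\widetilde{U_m(\mu,0)}$; and Lemma \ref{80} then gives
\begin{equation*}
0\le\int_\Omega\bigl(-\widetilde{U_m(\mu,0)}\bigr)^mH_m(\varphi)\le\int_\Omega\bigl(-U_m((1-\chi_j)\mu,0)\bigr)^mH_m(\varphi)\le m!\int_\Omega(-\varphi)(1-\chi_j)\,d\mu\longrightarrow0,
\end{equation*}
forcing $\widetilde{U_m(\mu,0)}=0$. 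Without an argument of this kind your proof that $U_m(\mu,0)\in\mathcal{N}_m^a$ is incomplete.
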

\begin{proof}
We split the proof into two steps.
\par \textit{\textbf{Step 1.}}\; We shall firstly prove that $U_m(\mu,0)\in\mathcal{E}_m$. Indeed, Let $\mu_j=\chi_{\Omega_j}\mu$, for each $j\geq 1$, then it follows from \cite[Theorem 1.2]{Chi2} that there exists $U_m(\mu_j,0)\in\mathcal{F}_{m}^{a}\cap L_{loc}^{\infty}(\Omega)$ such that $H_m(U_m(\mu_j,0))=\mu_j$. Note that  $\mu_j\nearrow\mu$ weakly, as $j\rightarrow+\infty$ and $U_m(\mu_j,0)$ decreases to  some function $u\in \mathcal{SH}_m(\Omega)$ as $j\rightarrow+\infty.$ By hypothesis we have 
\begin{equation}\label{1111}
\sup_j\int (-\varphi) H_m(U_m(\mu_j,0))=\sup_j\int (-\varphi) d\mu_j\leq \int(-\varphi) d\mu<+\infty.
\end{equation}
Let $W\Subset \Omega$ and $U_j=\sup\{\psi\in\mathcal{SH}_m(\Omega):\;\psi_{\vert_{W}}\leq U_m(\mu_j,0)_{\vert_{W}}\}$. We claim that $U_j\in \mathcal{E}_m^{0}$. Indeed $U_j\geq U_m(\mu_j,0)$, then by \cite[Theorem 3.22]{Chi2}, 
$$\displaystyle\int_{\Omega}H_m(U_j)\leq \int_{\Omega}H_m((\mu_j,0))<+\infty.$$ On the other hand there exists an exaustive function $\rho$ and $A>>1$ such that $A\rho_{\vert_{W}}\leq U_{j_{\vert_{W}}}$, this implies that $\lim_{z\rightarrow\partial\Omega}U_j(z)=0$. Note also that  $U_j\searrow u$ on $W$, as $j\rightarrow+\infty$, so it follows from Lemma \ref{18}  and (\ref{1111}) that 
$$\sup_j\int (-\varphi) H_m(U_j)\leq \sup_j\int(-\varphi) H_m(U_m(\mu_j,0)<+\infty.$$ 
Since Supp\;$H_m(U_j)\subset \overline{W}$, we have 
$$\sup_j\int H_m(U_j)\leq \left(\inf_W(-\varphi)\right)^{-1}\sup_j\int(-\varphi) H_m(U_j)<+\infty,$$
this is for all $W\Subset \Omega$, therefore $u\in\mathcal{E}_{m}.$ Hence by continuity of the Hessian Operator for a decreasing sequences we have $H_m(U_m(\mu_j,0))\rightarrow H_m(u)$ weakly, as $j\rightarrow+\infty$ 
then $\mu=H_m(u)$ and since $\mu$ vanishes on m-polar sets then $u\in \mathcal{E}_{m}^{a}$, and $u=U_m(\mu,0)$
\par \textit{\textbf{Step 2.}} Now we prove that $U_m(\mu,0)\in \mathcal{N}_{m}^{a}$. We have
$U_m(\mu_j,0)+U_m((1-\chi_j)\mu,0)\leq U_m(\mu,0).$
Indeed, since $H_m(U_m(\mu_j,0))+H_m(U_m((1-\chi_j)\mu,0))=\mu,$ then 
$$H_m\left[U_m(\mu_j,0)+U_m((1-\chi_j)\mu,0))\right]\geq \mu=U_m(\mu,0),$$
then by the comparison principle, $U_m(\mu_j,0)+U_m((1-\chi_j)\mu,0)\leq U_m(\mu,0)$, hence, it follows from the propreties after Definition \ref{120} that 
$$U_m((1-\chi_j)\mu,0)\leq\widetilde{\mathit{U_m(\mu_j,0)}}+\widetilde{\mathit{U_m((1-\chi_j)\mu,0)}}\leq \widetilde{\mathit{U_m(\mu,0)}},$$
note that $\widetilde{\mathit{U_m(\mu_j,0)}}=0$ since $U_m(\mu_j,0)\in\mathcal{F}_{m}^{a}$. Therefore it follows from Lemma \ref{80} that 
 $$0\leq\int (-\widetilde{\mathit{U_m(\mu,0))}}^{m}H_m(\varphi)\leq \int(-U_m((1-\chi_j)\mu,0))^{m}H_m(\varphi)\leq m!\int(-\varphi)H_m[U_m((1-\chi_j)\mu,0)].$$
 By letting $j\rightarrow+\infty$ we derive that $\int (-\widetilde{\mathit{U_m(\mu,0)}})H_m(\varphi)=0,$ so $\widetilde{\mathit{U_m(\mu,0))}}=0$ and $U_m(\mu,0))\in\mathcal{N}^{a}_{m}.$
\end{proof}
\section{Hessian measures carried on $m$-polar sets}\label{77773}
\subsection{Some auxiliary results}
\begin{defi}\label{24}
Let $u \in \mathcal{E}_m$ and $0\leq \tau$ be a bounded lower semicontinuous function. We define
\begin{equation*}
u_{\tau}:=\sup\underbrace{\{\phi\in \mathcal{SH}_m(\Omega): \phi\leq \tau^{1/m}u\}}_{\mathcal{A}}.
\end{equation*}
\end{defi}
We have the following elementary propreties:
\begin{description}
  \item[$(1)$] If $u, v \in \mathcal{E}_m$ such that $u\leq v$, then $u_{\tau}\leq v_{\tau}$.
  \item[$(2)$] if $u\in \mathcal{E}_m,$ then $u_{\tau}\in \mathcal{E}_m$, since $\|\tau\|^{1/m}_{L^{\infty}(\Omega)}u\leq u_{\tau}\leq 0.$
  \item[$(3)$] If $\tau_1, \tau_2$ are bounded lower semicontinuous functions such that $\tau_1\leq \tau_2,$ then $u_{\tau_1}\geq u_{\tau_2}.$
  \item[$(4)$] If $u\in \mathcal{E}_m,$ then $\hbox{Supp}\left(H_m(u))\right)\subseteq \hbox{Supp}\tau$, and if $\hbox{Supp}\;\tau$ is compact then $u_{\tau} \in \mathcal{F}_m.$ Indeed, if we take $\mathcal{D}\Subset \Omega$ such that $\hbox{Supp}\;\tau\subset \mathcal{D},$  since $u_{\tau} \in \mathcal{E}_m$ then there exists $v \in \mathcal{F}_m(D)$ such that $u_{\tau}=v$ on $\mathcal{D},$ and $v\leq \tau^{1/m}u$ on $\mathcal{D}$ and $v<0=\tau^{1/m}u$ on $\Omega\setminus\mathcal{D}$ Hence $v\leq \tau^{1/m}u$ on $\Omega.$ Then $v \in \mathcal{A},$ this implies that $v\leq u_{\tau},$ then by [Ch2, Theorem 3.9] we get that  $u_{\tau}\in \mathcal{F}_m.$
To prove that $\hbox{Supp}\left(H_m(u)\right)\subset \hbox{Supp}\;\tau$, we take  $z_0 \in \Omega \backslash \hbox{Supp}\;\tau.$ Then, $u_\tau<0,$ (because $u_\tau(z_0)\leq\tau^{1/m}u(z_0)=0$). On the other hand, there exists $r>0$ such that $B(z_0,r) \subset \Omega\setminus\hbox{Supp}\;\tau$ and $\displaystyle\sup_{B(z_0,r)} u_{\tau}<0,$
    we claim that $\exists \hat{u}_{\tau}\in \mathcal{SH}_m(\Omega)$ such that $\hat{u}_{\tau}=u_{\tau}$ on $\Omega\backslash B(z_0,r)\supset\hbox{Supp}\tau,$ $\hat{u}_{\tau}\geq u_{\tau}$ on  $\Omega$ and $H_m(\hat{u}_{\tau})=0$ on $B(z_0,r),$ and we have
\begin{equation*}
\sup_{\partial B(z_0,r)} \hat{u}_{\tau}=\sup_{\partial B(z_0,r)} u_{\tau}=\sup_{B(z_0,r)} u_{\tau}<0,
\end{equation*}
Since $z_0$ is arbitrary, then  $\hat{u}_{\tau}<0$ on $\Omega\backslash\hbox{Supp}\tau$,and we derive that $\hat{u}_{\tau}\leq u_{\tau}$ on $\Omega$, so $\hat{u}_{\tau}=u_{\tau}$, but $H_m(\hat{u}_{\tau})=0$ on $B(z_0,r)\subset \Omega \backslash \hbox{Supp}\tau$.
  \item[$(5)$] If $[\tau_j]$, $0\leq \tau_j$ is an increasing sequence of bounded lower semicontinuous functions that converges pointwise to
a bounded lower semicontinuous function $\tau$, as $j\rightarrow +\infty$, then $[u_{\tau_j}]$  is a decreasing sequence that converges
pointwise to $u_{\tau}$, as $j\rightarrow+\infty.$
\end{description}
 Let $u \in \mathcal{E}_m$, set $\mu_u=\chi_{\{u=-\infty\}}H_m(u)$ and denote by $\Gamma$ the class of functions $f=\displaystyle\sum_{k=1}^{l}\alpha_k\chi_{E_k},\alpha_k>0$ where  $E_k$ are pairwise disjoint and $\mu$-measurables such that $f$ is compactly supported and vanishes outside $\{u=-\infty\}$. We write $T$ for the class of functions in $\Gamma$ where the $E_k$ are compact.
\begin{defi}\label{98}
Let $u\in \mathcal{E}_m$ and $0\leq g\leq 1$ be a $\mu$-measurable function. we define:
$$u^{g}:=\inf_{T\ni f\leq g}\left(\sup\{ u_\tau:\;\; f\leq  \tau,\;\;  \tau\;\; \hbox{is bounded lower semicontinuous}\}\right)^*.$$
\end{defi}
By Definition \ref{24}, we have that $u\leq u^g\leq 0$ and if $g_1\leq g_2$, then $u^{g_1}\geq u^{g_2}$. Furthermore, if $g\in T$, then
$$u^{g}=\left(\sup\underbrace{\{ u_\tau:\;\; g\leq\tau,\;\;\tau\;\; \hbox{is bounded lower semicontinuous}\}}_{\mathcal{D}}\right)^*\in \mathcal{F}_m.$$
Because, if $g$ is compactly supported in $O$, there exists  $\tau=\chi_O$ compactly supported such that $u_{\tau}\in \mathcal{F}_m$ and $\hbox{Supp}\;g\subset O,$ in this case $g\leq \tau,$ furthermore $u_{\tau}\in\mathcal{D}.$
Hence $u_{\tau}\leq u^g,$ and by \cite[Theorem 3.9]{Chi2} we have that $u^g\in\mathcal{F}_m.$
\begin{thm}\label{36}
Let $u\in \mathcal{E}_m$ and $0\leq g\leq 1$ be a $\mu_u$-measurable function that vanishes outside $\{u=-\infty\}.$ Then $u^g \in \mathcal{E}_m$ and we have that $H_m(u^g)=gH_m(u).$
\end{thm}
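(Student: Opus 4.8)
The strategy is to prove the identity $H_m(u^g)=gH_m(u)$ in three stages of increasing generality: first for $g=\chi_K$ with $K\Subset\Omega$ compact and $K\subseteq\{u=-\infty\}$, then for $g\in T$, and finally for an arbitrary $\mu_u$-measurable $g$. \emph{Stage 1 ($g=\chi_K$).} Fix open sets with $K\subset U_{j+1}\subset U_j\Subset\Omega$ and $\bigcap_jU_j=K$. Since a bounded lower semicontinuous $\tau\geq\chi_K$ satisfies $\tau\geq(1-\varepsilon)\chi_{\{\tau>1-\varepsilon\}}$, with $\{\tau>1-\varepsilon\}$ open and $\supseteq K$, the monotonicity property $(3)$ following Definition \ref{24} shows $u^{\chi_K}=\bigl(\lim_j u_{\chi_{U_j}}\bigr)^{*}$, an increasing limit. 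For fixed open $U\supseteq K$, $u_{\chi_U}$ is the largest $m$-sh function $\leq u$ on $U$ and $\leq0$ elsewhere, hence $u_{\chi_U}=u$ on $U$, it is $m$-maximal on $\Omega\setminus\overline U$, and (choosing the $U_j$ generically) $H_m(u_{\chi_{U_j}})=\chi_{U_j}H_m(u)$. Three observations finish the stage: (i) property $(4)$ gives $\mathrm{Supp}\,H_m(u_{\chi_{U_j}})\subseteq\overline U_j$, so $u^{\chi_K}$, an increasing limit of functions $m$-maximal outside $\overline U_j$, has $H_m(u^{\chi_K})$ carried by $K$; (ii) from $u^{\chi_K}\geq u$ and Lemma \ref{28}$(1)$, $\chi_{\{u^{\chi_K}=-\infty\}}H_m(u^{\chi_K})\leq\mu_u$, whence by (i) $H_m(u^{\chi_K})\leq\chi_KH_m(u)$; (iii) the increasing sequence $u_{\chi_{U_j}}$ converges to the $m$-sh function $u^{\chi_K}$ in $\widetilde{C}_m$-capacity, and since $u_{\chi_{U_1}}\leq u_{\chi_{U_j}}\in\mathcal E_m$, Corollary \ref{00} gives $H_m(u_{\chi_{U_j}})\to H_m(u^{\chi_K})$ weakly; as $K$ is compact and $H_m(u_{\chi_{U_j}})(K)=\mu_u(K)$ for all $j$, this forces $H_m(u^{\chi_K})(K)\geq\mu_u(K)$. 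Then (ii) and (iii) give $H_m(u^{\chi_K})=\chi_KH_m(u)$.

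\emph{Stage 2 ($g\in T$).} For $g=\alpha\chi_E$, $E$ compact, put $v:=\alpha^{1/m}u\in\mathcal E_m$; one computes $v_\tau=u_{\alpha\tau}$, hence $u^{\alpha\chi_E}=v^{\chi_E}$, and Stage 1 applied to $v$ gives $H_m(u^{\alpha\chi_E})=\chi_EH_m(v)=(\alpha\chi_E)H_m(u)$. For $g=\sum_{k=1}^l\alpha_k\chi_{E_k}$ with the $E_k$ pairwise disjoint and compact, the lower bound is immediate: $\alpha_k\chi_{E_k}\leq g$ gives $u^{\alpha_k\chi_{E_k}}\geq u^g$, so by Lemma \ref{28}$(1)$ $(\alpha_k\chi_{E_k})H_m(u)=H_m(u^{\alpha_k\chi_{E_k}})\leq H_m(u^g)$, and summing over the disjoint $E_k$ gives $gH_m(u)\leq H_m(u^g)$. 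For the reverse I would write $u^g=\bigl(\lim_j u_{\tau_j}\bigr)^{*}$ with $\tau_j=\sum_k\alpha_k\chi_{V_k^j}$, the $V_k^j\Subset\Omega$ being pairwise disjoint open neighbourhoods decreasing to $E_k$; on $V_k^j$ the obstacle for $u_{\tau_j}$ is the $m$-sh function $\alpha_k^{1/m}u$, so by the balayage description of obstacle envelopes $u_{\tau_j}$ is $m$-maximal where it stays strictly below $\alpha_k^{1/m}u$ and $H_m(u_{\tau_j})\leq\alpha_kH_m(u)$ on $V_k^j$, while $H_m(u_{\tau_j})=0$ off $\bigcup_kV_k^j$; choosing the $V_k^j$ generically yields $\int_\Omega H_m(u_{\tau_j})\leq\sum_k\alpha_k\int_{V_k^j}H_m(u)$. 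Letting $j\to\infty$ (via $\widetilde{C}_m$-capacity convergence, Corollary \ref{00}, and $\int_{V_k^j}H_m(u)\downarrow\mu_u(E_k)$) gives $\int_\Omega H_m(u^g)\leq\int_\Omega gH_m(u)$, which with the lower bound forces $H_m(u^g)=gH_m(u)$.

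\emph{Stage 3 (general $g$).} Using inner regularity of $\mu_u$ by compact subsets of $\{u=-\infty\}$, choose $f_j\in T$ with $f_j\leq g$, $f_j\uparrow g$ $\mu_u$-a.e., cofinal in $\{f\in T:f\leq g\}$, so that $u^g=\inf_j u^{f_j}$ and $u^{f_j}\downarrow u^g$. Since $\int_WH_m(u^{f_j})=\int_Wf_jH_m(u)\leq\int_WH_m(u)<+\infty$ for every $W\Subset\Omega$, uniformly in $j$, the limit $u^g$ lies in $\mathcal E_m$; by continuity of $H_m$ along decreasing sequences in $\mathcal E_m$, $H_m(u^{f_j})\to H_m(u^g)$ weakly, while by Stage 2 and monotone convergence $H_m(u^{f_j})=f_jH_m(u)\uparrow gH_m(u)$, so $H_m(u^g)=gH_m(u)$ (which is then automatically carried by $\{u=-\infty\}$). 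I expect the main obstacle to be the upper estimate $H_m(u^g)\leq gH_m(u)$, i.e. observations (i)–(iii) of Stage 1 and the second half of Stage 2: it rests on the balayage/obstacle-problem description of the envelopes $u_\tau$ — their Hessian being carried by, and there inherited from, the coincidence set — together with weak continuity of $H_m$ along sequences converging in $\widetilde{C}_m$-capacity, and it must be combined with the compactness of $K$ (respectively of $\mathrm{Supp}\,g$) to upgrade the identity "$H_m(u_{\chi_{U_j}})=H_m(u)$ on $U_j$" to a sharp statement in the limit. The remaining points — the semicontinuity bookkeeping for $\tau^{1/m}u$, the generic choice of the sets $U_j$ and $V_k^j$, and the cofinal approximation of a measurable $g$ by elements of $T$ — are routine but must be carried out with care.
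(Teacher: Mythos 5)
The paper offers no proof of this statement at all: its ``proof'' is the single line ``See \cite[Proposition 5.8]{HP}'', so your attempt has to be judged as a reconstruction of that external argument. Your architecture (characteristic functions of compacts, then $T$, then general $g$; support localization plus Lemma~\ref{28}(1) for the upper bound on $m$-polar sets; weak convergence and portmanteau for the lower bound on the mass at $K$) is the right shape, and several steps are genuinely sound --- in particular combining the support statement with Lemma~\ref{28}(1) to get $H_m(u^{\chi_K})\le\chi_K H_m(u)$, and the observation that $H_m(u_{\chi_{U_j}})=H_m(u)$ on $U_j$ by locality so that no ``generic choice'' is really needed there. But three steps are genuine gaps rather than routine bookkeeping.

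First, the identification $u^{\chi_K}=(\lim_j u_{\chi_{U_j}})^{*}$. From $\tau\ge(1-\varepsilon)\chi_{V_\varepsilon}$ with $V_\varepsilon=\{\tau>1-\varepsilon\}$ open, property $(3)$ gives $u_\tau\le u_{(1-\varepsilon)\chi_{V_\varepsilon}}$; but since $(1-\varepsilon)\chi_{V}\le\chi_{V}$, the same property gives $u_{(1-\varepsilon)\chi_{V}}\ge u_{\chi_{V}}$ --- your bound sits \emph{above} the functions $u_{\chi_{U_j}}$ that you need to dominate $u_\tau$ with, and the discrepancy does not close as $U_j\downarrow K$ for fixed $\varepsilon$. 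Closing it requires a further limit in the parameter $1-\varepsilon$, e.g.\ at the level of the measures $\chi_{\{u=-\infty\}}H_m(u_{(1-\varepsilon)\chi_V})=(1-\varepsilon)\mu_u|_V$, which is essentially the statement being proved. Second, the reverse inequality of Stage~2 rests on ``$H_m(u_{\tau_j})\le\alpha_k H_m(u)$ on $V_k^j$ by the balayage description of obstacle envelopes''. This contact-set inequality for the unbounded obstacle $\alpha_k^{1/m}u$ with $u\in\mathcal{E}_m$ is precisely the technical core of \cite[Proposition 5.8]{HP} (its preparatory lemmas) and of \cite[Lemma 4.7]{ACCH}; nothing established earlier in this paper yields it, so it cannot be waved through as routine. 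Third, in Stage~3 you replace $\inf_{T\ni f\le g}u^{f}$ by a limit along a single increasing sequence $f_j\uparrow g$. The family $f\mapsto u^{f}$ is not obviously downward directed within $T$ (the maximum of two elements of $T$ has level sets that are differences of compacts, hence lies in $\Gamma$ but generally not in $T$), and even granting directedness one must justify that a countable subfamily computes the infimum; without this you only obtain $u^{g}\le\inf_j u^{f_j}$, hence only the one-sided bound $gH_m(u)\le H_m(u^{g})$ on polar sets. You yourself flag the contact-set estimate as ``the main obstacle''; the point is that it, together with the two reductions above, is the theorem, not its routine periphery.
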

\begin{proof}
See \cite[Proposition 5.8]{HP}.
\end{proof}
\begin{lem}\label{33}
Assume that $\alpha,\; \beta_1,\; \beta_2$ are non-negative measures defined on $\Omega$ which satisfy the following:
\begin{description}
  \item[(1)] $\alpha$ vanishes on all $m$-polar set of $\Omega,$
  \item[(2)] there exists an $m$-polar set $A\subset\Omega$ such that $\beta_1(\Omega\backslash A)=\beta_2(\Omega\backslash A)=0$,
  \item[(3)] for every $\rho \in \mathcal{E}_m^0\cap C\overline{\Omega})$ it holds that
       $\displaystyle\int_{\Omega}(-\rho)\beta_1\leq \displaystyle\int_{\Omega}(-\rho)(\alpha+\beta_2)<+\infty.$
\end{description}
Then we have that
$$\int_{\Omega}(-\rho)\beta_1\leq \int_{\Omega}(-\rho)\beta_2,\;\;\;\;\; \forall\rho \in \mathcal{E}^0_m\cap   C(\overline{\Omega}).$$
\end{lem}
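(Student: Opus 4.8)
The plan is to reduce the claimed inequality to hypothesis (3) by absorbing the $\alpha$-term using the fact that $\alpha$ puts no mass on $m$-polar sets while $\beta_1,\beta_2$ are both carried by the $m$-polar set $A$. First I would fix $\rho\in\mathcal{E}^0_m\cap C(\overline{\Omega})$ and apply Theorem \ref{51} (the decomposition theorem) to $\alpha$: since $\alpha$ vanishes on $m$-polar sets, there is an increasing sequence $H_m(\psi_k)\nearrow\alpha$ with $\psi_k\in\mathcal{E}^0_m$. The point of introducing these approximants is that for each fixed $k$ the measure $H_m(\psi_k)$ is dominated by $\alpha$, so in particular $\int_\Omega(-\rho)H_m(\psi_k)<+\infty$; but more importantly, I want to add to $\rho$ a small multiple of a function which is $-\infty$ precisely on $A$, in order to make the $\alpha$-contribution negligible while leaving the $\beta_i$-contributions essentially untouched.

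Concretely, since $A$ is $m$-polar there exists $w\in\mathcal{E}^0_m$ (or at least $w\in\mathcal{E}_m$, which one can truncate) with $w=-\infty$ on $A$; by Proposition \ref{0} one may approximate and assume $w\in\mathcal{E}^0_m$. For $\varepsilon>0$ consider $\rho_\varepsilon:=\rho+\varepsilon w$, which still lies in $\mathcal{SH}^-_m(\Omega)$ and, after the max-construction $\max(\rho_\varepsilon,\,\text{const})$ or via Proposition \ref{0}, can be approximated from above by functions of $\mathcal{E}^0_m\cap C(\overline{\Omega})$; alternatively I apply hypothesis (3) to a continuous decreasing approximating sequence $\rho_{\varepsilon,j}\downarrow\rho_\varepsilon$ and pass to the limit using the monotone convergence theorem on the right and Fatou on the left. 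On such test functions, hypothesis (3) gives
\begin{equation*}
\int_\Omega(-\rho-\varepsilon w)\,\beta_1\ \le\ \int_\Omega(-\rho-\varepsilon w)\,\alpha+\int_\Omega(-\rho-\varepsilon w)\,\beta_2.
\end{equation*}
Since $\beta_1(\Omega\setminus A)=\beta_2(\Omega\setminus A)=0$ and $w=-\infty$ on $A$, the terms $\int(-\varepsilon w)\beta_1$ and $\int(-\varepsilon w)\beta_2$ are $+\infty$ unless $\beta_1(A)=\beta_2(A)=0$ — so this naive choice fails, and instead I must push $w$ onto the $\alpha$-side only.

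The correct maneuver is therefore the reverse: choose the perturbation to be $-\infty$ on $\Omega$ in a way that kills $\alpha$ but is finite against $\beta_1,\beta_2$, which is impossible pointwise; so instead one argues by a cutoff exhausting $\{u=-\infty\}$ is not available here. The clean route, which I expect to be the intended one, is: apply hypothesis (3) with $\rho$ replaced by $\max(\rho,-\delta^{-1}) $ is irrelevant; rather, use that $\int_\Omega(-\rho)\alpha$ can be made arbitrarily small by restricting $\rho$ to be supported off a neighborhood of $A$. Precisely, write $\rho=\rho^{(1)}_t+\rho^{(2)}_t$ is not $m$-sh, so instead: let $\chi_t$ be continuous cutoffs with $\chi_t\to\mathbf{1}_{\Omega\setminus A}$; replace $\rho$ by $\rho\cdot\chi_t$ — again not $m$-sh. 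Hence the genuine argument must use hypothesis (3) as a black box together with the observation that, by Theorem \ref{51} applied to $\alpha$ and the regularity of $\rho$, one has $\int_\Omega(-\rho)\,\alpha=\lim_k\int_\Omega(-\rho)H_m(\psi_k)$, and for $m$-polar $A$, $\int_\Omega(-\rho)H_m(\psi_k)$ restricted to a neighborhood $A\subset\{w<-N\}$ tends to $0$ as $N\to\infty$ by absolute continuity of $H_m(\psi_k)$ with respect to $C_m$-capacity (Theorem \ref{51} / the discussion of $C_m$-capacity); thus $\alpha(\{w<-N\})$-weighted integrals vanish. Replacing $\rho$ by $\rho_N:=\rho-\rho\,\mathbf{1}$ is still the obstruction — so the last word: the main obstacle, and the step I would spend real effort on, is manufacturing an admissible test function $\rho_N\in\mathcal{E}^0_m\cap C(\overline{\Omega})$ with $\rho_N\downarrow$ something that vanishes $\beta_i$-a.e.\ on $A^c$-complement while $\int(-\rho_N)\alpha\to0$; once such $\rho_N$ are in hand, plugging them into (3), using $\beta_i(\Omega\setminus A)=0$ to discard the $\rho$-vs-$\rho_N$ discrepancy off $A$, and sending $N\to\infty$ followed by monotone convergence yields $\int_\Omega(-\rho)\beta_1\le\int_\Omega(-\rho)\beta_2$, as desired. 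I expect this capacity-estimate construction of $\rho_N$ — rather than any algebraic manipulation — to be the crux.
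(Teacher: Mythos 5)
There is a genuine gap: your text is not a proof but a record of several attempts that you yourself abandon (``this naive choice fails'', ``again not $m$-sh''), ending with the admission that the whole content of the lemma --- manufacturing admissible test functions that kill the $\alpha$-term while preserving the $\beta_i$-terms --- is still to be done. You correctly diagnose that this construction is the crux, and you correctly see why the additive perturbation $\rho+\varepsilon w$ fails (it blows up against the measures carried by $A$), but you never produce the construction, so nothing is proved. For the record, the paper itself only cites \cite[Lemma 4.11]{ACCH} here, so the argument you are missing is the one from that reference.

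The missing idea is a \emph{max}, not a sum, and the small parameter goes on the polar-set function, not on $\rho$. Since $A$ is $m$-polar, choose $\psi\in\mathcal{SH}_m^{-}(\Omega)$ with $A\subset\{\psi=-\infty\}$ (by restricting to a slightly smaller domain and taking a max with a function of $\mathcal{E}_m^0$ one may assume $\psi\in\mathcal{E}_m^0$). For $j\geq 1$ set $\sigma_j:=\max\bigl(\rho,\tfrac{1}{j}\psi\bigr)$. On $\{\psi=-\infty\}\supset A$ one has $\sigma_j=\rho$, so $\int(-\sigma_j)\,\beta_i=\int(-\rho)\,\beta_i$ for $i=1,2$ because $\beta_i(\Omega\setminus A)=0$; off $\{\psi=-\infty\}$ one has $\sigma_j\nearrow 0$, so $\int(-\sigma_j)\,\alpha\to 0$ by monotone convergence, precisely because $\alpha$ puts no mass on the $m$-polar set $\{\psi=-\infty\}$. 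Feeding $\sigma_j$ into hypothesis (3) and letting $j\to\infty$ gives the conclusion. The one technical point left is that $\sigma_j$ need not be continuous on $\overline{\Omega}$, whereas (3) is stated for $\mathcal{E}_m^0\cap C(\overline{\Omega})$; this is handled by approximating $\psi$ from above by a decreasing sequence in $\mathcal{E}_m^0\cap C(\overline{\Omega})$ (Proposition \ref{0} or \cite[Theorem 3.1]{Chi2}) and passing to the limit with monotone convergence. Without this explicit construction your proposal does not establish the lemma.
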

\begin{proof}
See \cite[Lemma 4.11]{ACCH}
\end{proof}
Let $u \in \mathcal{E}_m$, then by Theorem \ref{51} there exist $\psi_u \in \mathcal{E}_m^0$ and
a function $0\leq f_u\in \mathrm{L}_{loc}^{1}\left(H_m(\psi_u)\right)$ such that $H_m(u)=\alpha_u+\nu_u,$ where $\alpha_u=f_u H_m(\psi_u)$ and $\nu_u$ is a positive measure vanishing outside some $m$-polar set $\mathrm{A}\subseteq \Omega$ $\emph{(i.e}\;\nu_u(\Omega\setminus \mathrm{A})=0$). In the next Lemma we will use the notation that $\alpha_u=f_u H_m(\psi_u)$ and $\nu_u$ referring to this decomposition.
\begin{lem}\label{35}
Let $u,v \in\mathcal{E}_m.$ If there exists a function $\phi \in \mathcal{E}_m$ such that $H_m(\phi)$ vanishes on $m$-polar  sets and if
$|u-v|\leq -\phi,$ then $\nu_u=\nu_v.$
\end{lem}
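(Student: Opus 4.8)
The plan is to prove $\nu_u\le\nu_v$ as measures on $\Omega$; since the hypothesis $|u-v|\le-\phi$ is symmetric in $u$ and $v$, the reverse inequality is obtained the same way, and therefore $\nu_u=\nu_v$. First I would unwind the hypothesis. As every function in $\mathcal{E}_m$ is non-positive, $|u-v|\le-\phi$ is equivalent to $\phi\le u-v\le-\phi$; in particular $v+\phi\le u$ on $\Omega$. Since $v,\phi\in\mathcal{E}_m$ (and $\mathcal{E}_m$ is stable under addition), also $v+\phi\in\mathcal{E}_m$, so the second inequality of Lemma \ref{28}\,(1), applied to the pair $u\ge v+\phi$, gives
\begin{equation*}
\int_A H_m(u)\le\int_A H_m(v+\phi)\qquad\text{for every $m$-polar set }A\subset\Omega .
\end{equation*}

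The key step is to recognise that the right-hand side equals $\int_A H_m(v)$. Expanding $\big(dd^c(v+\phi)\big)^m\wedge\beta^{n-m}$ by the binomial formula, the term of multidegree $k$ in $v$ (for $0\le k\le m$) is $\binom{m}{k}(dd^cv)^k\wedge(dd^c\phi)^{m-k}\wedge\beta^{n-m}$. When $k\le m-1$ this current carries at least one factor $dd^c\phi$, so by the mixed estimate of Lemma \ref{28}\,(3),
\begin{equation*}
\int_A (dd^cv)^k\wedge(dd^c\phi)^{m-k}\wedge\beta^{n-m}\le\Big(\int_A H_m(v)\Big)^{k/m}\Big(\int_A H_m(\phi)\Big)^{(m-k)/m}=0,
\end{equation*}
because $H_m(\phi)$ puts no mass on the $m$-polar set $A$ and $m-k\ge 1$. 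Hence only the term $k=m$ contributes, so $\int_A H_m(v+\phi)=\int_A H_m(v)$, and combining with the previous display we get $\int_A H_m(u)\le\int_A H_m(v)$ for every $m$-polar set $A$.

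Now I would pass to the polar parts. Write the decompositions $H_m(u)=\alpha_u+\nu_u$ and $H_m(v)=\alpha_v+\nu_v$ coming from Theorem \ref{51}; since $\alpha_u$ and $\alpha_v$ vanish on $m$-polar sets, we have $\int_A H_m(u)=\nu_u(A)$ and $\int_A H_m(v)=\nu_v(A)$ for every $m$-polar set $A$, hence $\nu_u(A)\le\nu_v(A)$ for all such $A$. Finally $\nu_u$ is carried by an $m$-polar set $A_u$, so for an arbitrary Borel set $E\subset\Omega$ the set $E\cap A_u$ is $m$-polar and
\begin{equation*}
\nu_u(E)=\nu_u(E\cap A_u)\le\nu_v(E\cap A_u)\le\nu_v(E),
\end{equation*}
i.e. $\nu_u\le\nu_v$. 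Exchanging the roles of $u$ and $v$ (using $u+\phi\le v$) gives $\nu_v\le\nu_u$, and therefore $\nu_u=\nu_v$.

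The main obstacle is the middle step: seeing that all the genuinely mixed terms in the expansion of $H_m(v+\phi)$ are harmless on $m$-polar sets. This is precisely where the assumption that $H_m(\phi)$ charges no $m$-polar set is used, through the Cauchy--Schwarz-type inequality of Lemma \ref{28}\,(3); everything else is routine bookkeeping with the Cegrell decomposition of Theorem \ref{51} and the monotonicity of $H_m$ on $m$-polar sets from Lemma \ref{28}\,(1). (One could alternatively package the conclusion through Lemma \ref{33}, applied with $\alpha=\alpha_u$, $\beta_1=\nu_v$, $\beta_2=\nu_u$, but the carrier argument above is more direct.)
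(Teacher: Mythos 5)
Your proof is correct, but it follows a genuinely different and more elementary route than the paper's. The paper first reduces to $u,v,\phi\in\mathcal{F}_m$, then uses the integration-by-parts monotonicity of Lemma \ref{18} to get $\int(-\rho)H_m(u)\leq\int(-\rho)H_m(v+\phi)$ for $\rho\in\mathcal{E}_m^0$, observes that the mixed terms $\binom{m}{k}(dd^c\phi)^k\wedge(dd^cv)^{m-k}\wedge\beta^{n-m}$ are absolutely continuous with respect to $C_m$ so that $\nu_{v+\phi}=\nu_v$, and then invokes the measure-comparison Lemma \ref{33} together with a density lemma from \cite{Chi2} to upgrade the weak inequalities $\int(-\rho)\,\nu_u\leq\int(-\rho)\,\nu_v$ (and its symmetric counterpart) to the identity $\nu_u=\nu_v$. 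You instead work set-theoretically: Lemma \ref{28}(1) applied to $u\geq v+\phi$ gives $\int_AH_m(u)\leq\int_AH_m(v+\phi)$ on every $m$-polar $A$, the mixed-term vanishing is handled by the Cauchy--Schwarz-type estimate of Lemma \ref{28}(3) rather than by a $\ll C_m$ argument, and the passage from ``$\nu_u\leq\nu_v$ on $m$-polar sets'' to ``$\nu_u\leq\nu_v$ as measures'' is the clean observation that $\nu_u$ is carried by an $m$-polar set. This buys you two things: you never need the reduction to $\mathcal{F}_m$ (Lemma \ref{28} is stated on all of $\mathcal{E}_m$), and you avoid the finiteness hypotheses and the auxiliary Lemmas \ref{18} and \ref{33} entirely. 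The only point worth a sentence of care is the possible $0\cdot\infty$ in Lemma \ref{28}(3) when $\int_AH_m(v)=+\infty$; this is resolved by applying the estimate on $A\cap\Omega_j$ for a fundamental sequence $[\Omega_j]$ and letting $j\to+\infty$. With that caveat recorded, your argument is complete.
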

\begin{proof}
Assume first that $u,v,\phi\in\mathcal{F}_m$, and let $\Omega^{'}\Subset \Omega,$ it is enough to prove that $\nu_u=\nu_v$ on $\Omega^{'}.$ The assumption
that $|u-v|\leq -\phi,$ yields that $v+\phi\leq u$, therfore, it follows from Lemma \ref{18} that
\begin{equation}\label{2000}
\int_{\Omega}(-\rho)H_m(u)\leq \int_{\Omega}(-\rho)H_m(v+\phi)<+\infty,
\end{equation}
\hbox{where} $\rho\in \mathcal{E}_{m}^{0}$. Now Put $T=(dd^cv)^{m-k}\wedge \beta^{n-m},$ since $\displaystyle\sum_{k=1}^{m}C^{k}_{m}(dd^c\phi)^{k}\wedge T\ll C_m$ we have that $\nu_{v+\phi}=\nu_v$, and
$\alpha_{v+\phi}=\alpha_v+\displaystyle\sum_{k=1}^{m}C^{k}_{m}(dd^c\phi)^{k}\wedge T.$
 Lemma \ref{33} and inequality (\ref{2000}) yields that $\displaystyle\int_{\Omega}(-\rho)\nu_v\leq \displaystyle\int_{\Omega}(-\rho)\nu_u,$
for every $\rho\in \mathcal{E}_{m}^{0}$. The same argument can be made to prove that $\displaystyle\int_{\Omega}(-\rho)\nu_u\leq \displaystyle\int_{\Omega}(-\rho)\nu_v.$ Finally it follows from \cite[Lemma 3.10]{Chi2} that $\nu_u=\nu_v$
\end{proof}
\subsection{Subsolution Theorem}
\begin{pro}\label{37}
Let $H\in \mathcal{E}_m\cap \mathcal{MSH}_m(\Omega).$
\begin{description}
  \item[(a)] If $v\in \mathcal{N}_m,$ $H_m(v)$ is carried by an $m$-polar set and 
  $\displaystyle\int_{\Omega}(-\xi)H_m(v)< +\infty$ for all $\xi \in \mathcal{E}^0_m\cap C(\overline{\Omega}),$ then
$$u:=\sup \left\{\varphi\in \mathcal{SH}_m(\Omega):\;\; \varphi\leq \min(v,H) \right\}\in \mathcal{N}_m(H)\;\; \hbox{and} \;H_m(u)=H_m(v).\;\;\;\;\;\;\;\;\;\;\;\;\;\;\;\;\;\;\;\;\;\;\;\;\;$$
  \item[(b)] Assume that  $\psi \in \mathcal{N}^a_m$ and  $v\in \mathcal{N}_m(H)$ such that $H_m(v)$ is carried by an $m$-polar set and\\
      $\displaystyle\int_{\Omega}(-\xi)(H_m(\psi)+H_m(v))<+\infty,\;\; \hbox{for\; all}\;\; \xi \in \mathcal{E}^0_m\cap C(\overline{\Omega}).$
If $u$ is the function defined on $\Omega$ by 
$$
u:=\sup\left\{\varphi\;/\; \varphi\in \mathfrak{B}(H_m(\psi),v)\right\},
$$
where 
$$\mathfrak{B}(H_m(\psi),v)=\{\varphi\in\mathcal{E}_m/\; H_m(\psi)\leq H_m(\varphi)\;\; \hbox{and}\;\; \varphi\leq v\}.
$$
Then $u\in \mathcal{N}_m(H)$ and $H_m(u)=H_m(\psi)+H_m(v).$
\end{description}
\end{pro}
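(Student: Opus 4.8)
The plan is to treat the two statements in parallel, since in each case $u$ is the (regularized) supremum of a Perron‑type family and the two claims are that $u\in\mathcal{N}_m(H)$ and that $H_m(u)$ has the prescribed value; the membership and one measure inequality are soft, while the reverse measure inequality is the real work.

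\emph{Membership in $\mathcal{N}_m(H)$.} In case $(a)$, since $v,H\le 0$ we have $v+H\le\min(v,H)$ and $v+H\in\mathcal{SH}_m(\Omega)$, so the defining family is nonempty; Choquet's lemma gives $u=u^{*}\in\mathcal{SH}_m(\Omega)$ with $v+H\le u\le\min(v,H)\le H$, and with $v\in\mathcal{N}_m$ this is exactly $u\in\mathcal{N}_m(H)$. In case $(b)$, the family $\mathfrak{B}(H_m(\psi),v)$ contains $\psi+v$ (indeed $\psi+v\le v$, $\psi+v\in\mathcal{E}_m$, and $H_m(\psi+v)\ge H_m(\psi)$ because the remaining terms in the expansion of $(dd^{c}\psi+dd^{c}v)^m\wedge\beta^{n-m}$ are nonnegative currents), and it is closed under pairwise maxima by Lemma~\ref{28}(2) since $H_m(\psi)$ puts no mass on $m$-polar sets; hence $u=u^{*}\in\mathfrak{B}(H_m(\psi),v)$, so $H_m(u)\ge H_m(\psi)$ and $\psi+v\le u\le v$. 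Writing $v\ge\varphi_v+H$ with $\varphi_v\in\mathcal{N}_m$ we get $u\ge(\psi+\varphi_v)+H$ with $\psi+\varphi_v\in\mathcal{N}_m$ (a convex cone) and $u\le v\le H$, so $u\in\mathcal{N}_m(H)$ again.

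\emph{The easy measure inequality and the singular part.} From the obstacle $u\le v$ and Lemma~\ref{28}(1) we get $\chi_{\{v=-\infty\}}H_m(v)\le\chi_{\{u=-\infty\}}H_m(u)$; in case $(a)$, $H_m(v)$ being carried by the $m$-polar set $\{v=-\infty\}$, this reads $H_m(v)\le H_m(u)$, and in case $(b)$ it gives $H_m(v)\le H_m(u)$ on $m$-polar sets, which combined with $H_m(u)\ge H_m(\psi)$ and the fact that $H_m(\psi)$ vanishes on $m$-polar sets yields $H_m(\psi)+H_m(v)\le H_m(u)$ after separating absolutely continuous and singular parts. In case $(b)$ the singular part of the reverse inequality is also immediate: from $\psi+v\le u\le v$ we have $|u-v|=v-u\le-\psi$ with $\psi\in\mathcal{N}^a_m$, so Lemma~\ref{35} gives $\nu_u=\nu_v$; since $H_m(v)$ is carried by an $m$-polar set its absolutely continuous part vanishes, so $H_m(v)=\nu_v=\nu_u$, and it remains only to prove that the absolutely continuous part $\alpha_u$ of $H_m(u)$ equals $H_m(\psi)$.

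\emph{The reverse inequality (the main obstacle).} The crux is to exploit that $u$ is the \emph{largest} member of its family, so its Hessian measure is as small as possible off the obstacle; one cannot merely apply Lemma~\ref{80} to $v+H\le u$ (resp. $\psi+v\le u$), since the mixed terms of $H_m(v+H)$ (resp. $H_m(\psi+v)$) carry superfluous mass. For case $(a)$ I would first show that $H_m(u)$ is carried by the $m$-polar set $\{u=-\infty\}\subseteq\{v=-\infty\}\cup\{H=-\infty\}$: off the contact set $\{u=\min(v,H)\}$ the function $u$ is $m$-maximal, hence $H_m(u)=0$ there, while on the contact set a balayage comparison bounds $\chi_{\{u>-\infty\}}H_m(u)$ by $\chi_{\{u=v>-\infty\}}H_m(v)+\chi_{\{u=H\}}H_m(H)$, and both summands vanish ($H_m(v)$ is carried by $\{v=-\infty\}$, and $H_m(H)=0$ by $m$-maximality of $H$). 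Granting this, the hypothesis $\int_\Omega(-\xi)H_m(v)<+\infty$ for all $\xi\in\mathcal{E}^0_m\cap C(\overline{\Omega})$ lets one run the argument of Lemma~\ref{33} (with the absolutely continuous piece taken to be $0$) to upgrade the comparison into $\int_\Omega(-\xi)H_m(u)\le\int_\Omega(-\xi)H_m(v)$ for all such $\xi$; together with $H_m(v)\le H_m(u)$ this forces $H_m(u)=H_m(v)$, exactly as in the last step of the proof of Lemma~\ref{35}. For case $(b)$, with $\nu_u=H_m(v)$ already known, I would compare $u$ with the explicit solution $\Phi:=U_m(H_m(\psi),0)$, which by Proposition~\ref{1013} lies in $\mathcal{N}^a_m$ with $H_m(\Phi)=H_m(\psi)$ (and in fact $\Phi=\psi$ by the uniqueness in Theorem~\ref{21}): the maximality of $u$ in $\mathfrak{B}(H_m(\psi),v)$ forces the absolutely continuous part of $H_m(u)$ to stay below $H_m(\psi)$ on the non‑contact region $\{u<v\}$ (otherwise $u$ could be pushed up inside a small ball, by a local Dirichlet solve, while remaining in $\mathfrak{B}$) and below the absolutely continuous part of $H_m(v)$ on $\{u=v\}$, which is $0$; hence $\alpha_u\le H_m(\psi)$, and with $\alpha_u\ge H_m(\psi)$ we obtain $\alpha_u=H_m(\psi)$, whence $H_m(u)=H_m(\psi)+H_m(v)$, the identity principle Theorem~\ref{31} being applicable because the stated finiteness hypotheses make $\int_\Omega(-\xi)H_m(u)<+\infty$ for $\xi\in\mathcal{E}^0_m\cap C(\overline{\Omega})$. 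I expect the contact‑set/balayage bookkeeping — pinning down precisely which measures are absorbed on $\{u=v\}$, $\{u=H\}$ and their complement, and proving that $H_m(u)$ is carried by an $m$-polar set in case $(a)$ — to be the main technical difficulty; the remainder is an assembly of the comparison principle (Corollary~\ref{14}), the identity principle (Theorem~\ref{31}), the decomposition theorem (Theorem~\ref{51}), and the approximation Proposition~\ref{0}, all licensed by the integrability assumptions on $H_m(v)$ and $H_m(\psi)$.
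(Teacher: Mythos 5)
Your ``soft'' steps are fine and agree with the paper: the membership $u\in\mathcal{N}_m(H)$ in both cases (via $v+H\le\min(v,H)$, resp. $\psi+v\in\mathfrak{B}(H_m(\psi),v)$ and stability of $\mathfrak{B}$ under maxima through Lemma \ref{28}(2) and Corollary \ref{00}), the inequality $H_m(u)\ge H_m(\psi)$ in (b), and the identification of the singular part $\nu_u=H_m(v)$ via $|u-v|\le-\psi$ and Lemma \ref{35}. The problem is that the entire reverse inequality --- the actual content of the proposition --- rests on two balayage claims that you state but do not prove, and that are not available in the paper's toolkit. In (a) you assert that $H_m(u)=0$ off the contact set $\{u=\min(v,H)\}$ and that on the contact set $\chi_{\{u>-\infty\}}H_m(u)$ is dominated by $\chi_{\{u=v\}}H_m(v)+\chi_{\{u=H\}}H_m(H)$. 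The obstacle $\min(v,H)$ is only upper semicontinuous (not $m$-sh), the set $\{u<\min(v,H)\}$ need not be open, and $u$ is unbounded; no lemma of this kind is proved or cited in the paper, and establishing it would be a substantial piece of work in itself. In (b) the claim that maximality of $u$ in $\mathfrak{B}$ forces $\alpha_u\le H_m(\psi)$ on $\{u<v\}$ ``by a local Dirichlet solve'' is likewise unproven: after modifying $u$ in a small ball you must verify that the glued function still satisfies $H_m(\cdot)\ge H_m(\psi)$ globally and still lies below $v$, and that the resulting contradiction controls the absolutely continuous part only; none of this is routine.

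The paper closes exactly this gap by a different, constructive mechanism, and you should adopt it. In (a): take $v_j\in\mathcal{E}^0_m\cap C(\overline{\Omega})$ decreasing to $v$, use Theorem \ref{52} to produce $\omega_j\in\mathcal{N}_m(H)$ with $H_m(\omega_j)=H_m(v_j)$, note $\omega_j\le u_j:=\sup\{\varphi:\varphi\le\min(v_j,H)\}$ (via Lemma \ref{34}), and apply Lemma \ref{18} plus Proposition \ref{19} to get $\int(-\rho)H_m(u)\le\int(-\rho)H_m(v)$ for all $\rho\in\mathcal{E}^0_m\cap C(\overline{\Omega})$; this shows $H_m(u)$ is carried by a polar set, and Lemma \ref{35} (with $\phi=H$, $H_m(H)=0$) then gives $H_m(u)=H_m(v)$. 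In (b): approximate $v$ by $v_j\in\mathcal{E}^0_m(H)$ (Proposition \ref{0}), use Proposition \ref{1013} and Theorem \ref{52} to build $\omega_j\in\mathfrak{B}(H_m(\psi),v_j)$ with $H_m(\omega_j)=H_m(\psi)+H_m(v_j)$ exactly, deduce $\omega_j\le u_j$, and pass to the limit with Lemma \ref{80} and Proposition \ref{19} to obtain $\int(-\xi)\alpha_u\le\int(-\xi)H_m(\psi)$, which together with your $\alpha_u\ge H_m(\psi)$ forces $\alpha_u=H_m(\psi)$. In short: the proof is driven by manufacturing explicit members of the Perron families with prescribed Hessian measure out of the solvability results already proved, not by a contact-set analysis; as written, your argument has a genuine hole at its central step.
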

\begin{proof}
\begin{description}
   \item[(a)] It is clear that $\min(v,H)$ is a negative and upper semicontinuous function, then $u\in \mathcal{SH}_m(\Omega)$ and since $H+v\leq\min(v,H)$, then $H+v\leq u\leq H$, so, $u\in\mathcal{N}_m(H)$. By \cite[Theorem 3.1]{Chi2} we can choose a decreasing sequence $[v_j],\; v_j \in \mathcal{E}^0_m\cap C(\overline{\Omega})$ that converges pointwise to $v$, as $j\rightarrow +\infty$, and by Theorem \ref{52}, there exists $\omega_j \in \mathcal{N}_m(H),$ $j\in \mathbb{N}$ such that $H_m(\omega_j)=H_m(v_j),$ then by Lemma \ref{34} $v_j\geq\omega_j.$ Let now
                $$u_j:=\sup \left\{\varphi\in \mathcal{SH}_m(\Omega):\;\varphi\leq \min(v_j,H) \right\},$$
    then $u_j\in \mathcal{E}^{0}_m(H)$ and $u_j\geq\omega_j,$  furthermore , using lemma $\ref{18}$ we get
                         $\displaystyle\int_{\Omega}(-\rho)H_m(u_j)\leq \displaystyle\int_{\Omega}(-\rho)H_m(\omega_j).$
Hence, Proposition \ref{19} implies that
    $\displaystyle\int_{\Omega}(-\rho)H_m(u)\leq \displaystyle\int_{\Omega}(-\rho)H_m(v) \; \hbox{for\;all}\;\rho \in \mathcal{E}^0_m\cap C(\overline{\Omega}),$
  hence $H_m(u)$ is carried by $\{u=-\infty\}$. On the other hand $|u-v|\leq -H,$ then by Lemma \ref{35} $H_m(u)=H_m(v).$ Thus, part (a) of this proof is completed.
  \item[(b)] By Choquet's Lemma we derive that 
  $$u:=\displaystyle\sup_j \left\{\varphi_j:\; \varphi_j\in \mathfrak{B}(H_m(\psi),v)\right\}\in\mathcal{E}_m.$$
Let $\phi_k:=\displaystyle\max_k\left\{\varphi_1...,\varphi_k\right\}$, so $[\phi_k]_k$ is an increasing sequence that converges pointwise to $\phi_k\nearrow u,$ as $k\longrightarrow +\infty$. Furthermore, we claim that $\phi_k \in \mathfrak{B}(H_m(\psi),v),$ indeed, if we take $\varphi_s,\;\varphi_r\in \mathfrak{B}(H_m(\psi),v)$, $(s,r)\in\mathrm{N}^{2}$, then by $(ii)$ in Lemma \ref{28} we have $H_m(\psi)\leq H_m(\max(\varphi_s,\varphi_r)).$ Therefore, by Corollary  \ref{00} $H_m(\psi)\leq H_m(u).$
On the other hand, by Theorem \ref{51}, there exists $\alpha_u,\;\nu_u$ two positive measures on $\Omega$ such that $H_m(u)=\alpha_u+\nu_u$ with $\alpha_u$ vanishes on an $m$-polar sets and $\nu_u$ is carried by an $m$-polar set, since $\psi+v\in \mathfrak{B}(H_m(\psi),v)$, then  $\psi+v\leq u\leq v,$ so $u\in \mathcal{N}_m(H)$.
\par Now, let's prove the second assertion. Since $\vert u-v\vert\leq -\psi,$ then by Lemma \ref{35},  $\nu_u=H_m(v)$. Note that $\alpha_u\geq H_m(\psi).$ By Proposition \ref{0}, there exists a decreasing sequence $[v_j]\subset \mathcal{E}_m^0(H),$ that  converges pointwise to $v$ as $j\rightarrow+\infty.$ Namely $\exists \gamma\in\mathcal{E}_m^0$ such that $\gamma+H\leq v_j\leq H,$ then by Lemma \ref{28} we have 
$$\int_{A} H_m(v_j)\leq \int_{A} H_m(\gamma+H)\leq C(m)\left(\int_{A} H_m(\gamma)\right)^{1/m}\left(\int_{A} H_m(H)\right)^{1/m}=0,$$
where $A$ is an $m$-polar set. Hence $H_m(v_j)$ vanishes in $m$-polar sets. Furthermore, since
$$\displaystyle\int_\Omega(-\xi)\left(H_m(\psi)+H_m(v_j)\right)<+\infty,\;\; \hbox{for\; all}\;\; \xi \in \mathcal{E}^0_m\cap C(\overline{\Omega}),$$ then  by Proposition \ref{1013} there exists a uniquely determined function $\Phi_j \in \mathcal{N}^{a}_{m}$ such that 
$$H_m(\Phi_j)=H_m(v_j)+H_m(\psi).$$ 
and using Theorem \ref{52}, one  can find $\omega_j\in \mathcal{N}_m(H)$ such that 
$H_m(\omega_j)=H_m(v_j)+H_m(\psi),$
then the comparison principle (Corollary \ref{14}) applied for $\omega_j$ and $v_j$ gives that $\omega_j\in \mathfrak{B}(H_m(\psi),v_j)$. Hence, we can consider  
 $$u_j:=\sup \left\{\varphi:\; \varphi\in \mathfrak{B}(H_m(\psi),v_j) \right\},$$
then the sequence $[u_j]$ decreases pointwise to $u$, as $j\longrightarrow+\infty$. Furthermore,  Lemma \ref{80} implies that
$$
   \int_\Omega(-\xi)H_m(u_j) \leq \int_\Omega(-\xi)H_m(\omega_j) =\int_\Omega(-\xi)\left(H_m(\psi)+H_m(v_j)\right).
$$
Then Proposition \ref{19} yields that 
$$\int_\Omega(-\xi)H_m(u)=\int_\Omega(-\xi)(\alpha_u+\nu_u)\leq\int_\Omega(-\xi)\left(H_m(\psi)+\nu_u\right).$$
for all $\xi \in \mathcal{E}^0_m\cap C(\overline{\Omega})$, since $\alpha_u\geq H_m(\psi)$, then it follows that $\int_\Omega\xi\alpha_u=\int_\Omega\xi H_m(\psi)$, hence $\alpha_u=H_m(\psi).$ Thus, this proof is completed.
\end{description}
\end{proof}
We are now in a position to prove our main Theorem. Assume that $\mu$ is a non-negative measure. It follows from Theorem \ref{51} that there exist functions  $\phi \in \mathcal{E}_{m}^{0}$, $f \in L^{1}_{Loc}\left(H_m(\phi)\right)$, $f\geq0,$ such that $\mu= fH_m(\phi)+ \nu,$
where $\nu$ is carried by an $m$-polar set.
\begin{thm}\label{1000}
 \begin{description}
 \item[$(a)$] If there exists a function  $\omega \in \mathcal{E}_m$  such that $\mu \leq H_m(\omega),$  then there exist functions $\Phi,\Psi\in \mathcal{E}_m$, $\Phi,\Psi\geq\omega$, such that
        $H_m(\Psi)=fH_m(\phi)\; \hbox{and}\; H_m(\Phi)=\nu,$
where $\nu$ is carried by $\{\Phi=-\infty\}.$
\item[$(b)$] If there exists a function $\omega \in \mathcal{E}_m$  with $\mu \leq H_m(\omega),$ then to every function
   $H\in\mathcal{E}_m\cap \mathcal{MSH}_m(\Omega),$ there exists a function $u \in \mathcal{E}_m$, $\omega+H\leq u\leq H$ with $H_m(u)=\mu.$ In particular, if $\omega \in \mathcal{N}_m$, then, $u \in \mathcal{N}_m(H).$
\end{description}
\end{thm}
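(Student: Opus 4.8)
The plan is to work along the canonical decomposition $\mu = fH_m(\phi)+\nu$ recorded just before the statement: write $\lambda := fH_m(\phi)$, which vanishes on $m$-polar sets and is locally finite (being $\le\mu\le H_m(\omega)$), and keep $\nu$, carried by some $m$-polar set. \textbf{For part (a)} I would treat the polar part first: since $\nu = \chi_A\mu \le \chi_A H_m(\omega)$ for some $m$-polar $A$, and $H_m(\omega)$ restricted to any $m$-polar set is carried by $\{\omega=-\infty\}$ (Lemma \ref{28}), the measure $\nu$ is carried by $\{\omega=-\infty\}$ and $\nu \le \mu_\omega := \chi_{\{\omega=-\infty\}}H_m(\omega)$; by Radon--Nikodym, $\nu = g\,\mu_\omega$ with $0\le g\le 1$, $\mu_\omega$-measurable and vanishing outside $\{\omega=-\infty\}$. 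Setting $\Phi := \omega^g$, Theorem \ref{36} gives $\Phi\in\mathcal{E}_m$, $\omega\le\Phi\le 0$ and $H_m(\Phi)=gH_m(\omega)=\nu$, and applying Lemma \ref{28} once more (to the $m$-polar set carrying $\nu$) shows $\nu$ is carried by $\{\Phi=-\infty\}$. For the absolutely continuous part I would fix a fundamental sequence $[\Omega_j]$ and put $\Psi_j := U_m(\chi_{\Omega_j}\lambda,0)$; since $\chi_{\Omega_j}\lambda$ has finite mass and no $m$-polar mass, $\Psi_j\in\mathcal{F}_m^a$ with $H_m(\Psi_j)=\chi_{\Omega_j}\lambda$ (Theorem \ref{121}), and as $\omega\le 0$ with $H_m(\omega)\ge\lambda\ge\chi_{\Omega_j}\lambda$ the function $\omega$ lies in the family defining $\Psi_j$, so $\omega\le\Psi_j$. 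The sequence $[\Psi_j]$ decreases, hence $\Psi:=\lim_j\Psi_j$ satisfies $\omega\le\Psi\le 0$, so $\Psi\in\mathcal{E}_m$, and continuity of $H_m$ along decreasing sequences gives $H_m(\Psi)=\lim_j\chi_{\Omega_j}\lambda=\lambda$. Both $\Phi$ and $\Psi$ dominate $\omega$, as required.

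\textbf{For part (b)}, fix $H\in\mathcal{E}_m\cap\mathcal{MSH}_m(\Omega)$; I would build $u$ by exhaustion along $[\Omega_j]$. For the polar data, pick increasing compacts $K_j\subset\{\Phi=-\infty\}\cap\Omega_j$ with $\nu(\Omega_j\setminus K_j)\to 0$; then $\Phi^{\chi_{K_j}}\in\mathcal{F}_m\subset\mathcal{N}_m$ (remark after Definition \ref{98}), $\Phi^{\chi_{K_j}}\ge\Phi\ge\omega$, and $H_m(\Phi^{\chi_{K_j}})=\chi_{K_j}\nu$ is a finite $m$-polar measure. Proposition \ref{37}(a) applied to $\Phi^{\chi_{K_j}}$ (its integrability hypothesis holds because $\chi_{K_j}\nu$ is finite) produces $v_j := \sup\{\varphi\in\mathcal{SH}_m(\Omega):\varphi\le\min(\Phi^{\chi_{K_j}},H)\}\in\mathcal{N}_m(H)$ with $H_m(v_j)=\chi_{K_j}\nu$ and $\omega+H\le\Phi^{\chi_{K_j}}+H\le v_j\le H$. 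Proposition \ref{37}(b) applied with $\psi:=\Psi_j\in\mathcal{F}_m^a\subset\mathcal{N}_m^a$ and this $v_j$ (integrability again from finiteness of $\chi_{\Omega_j}\lambda+\chi_{K_j}\nu$) then produces $u_j := \sup\mathfrak{B}(H_m(\Psi_j),v_j)\in\mathcal{N}_m(H)$ with $H_m(u_j)=H_m(\Psi_j)+H_m(v_j)=\chi_{\Omega_j}\lambda+\chi_{K_j}\nu$ and $u_j\le v_j\le H$.

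For the sharp lower bound I would check that $\max(\Psi_j+v_j,\ \omega+H)$ lies in $\mathfrak{B}(H_m(\Psi_j),v_j)$: it is $\le v_j$ since both arguments are, while $H_m(\max(\Psi_j+v_j,\omega+H))\ge\chi_{\Omega_j}\lambda$ because $H_m(\Psi_j+v_j)\ge H_m(\Psi_j)=\chi_{\Omega_j}\lambda$ and $H_m(\omega+H)\ge H_m(\omega)\ge\chi_{\Omega_j}\lambda$, the coincidence set being handled by Lemma \ref{28}(2) as $\chi_{\Omega_j}\lambda$ charges no $m$-polar set. Thus $u_j\ge\max(\Psi_j+v_j,\omega+H)\ge\omega+H$. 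Since $\chi_{\Omega_j}\lambda$ and $\chi_{K_j}\nu$ increase while $v_j$ decreases (because $\Phi^{\chi_{K_j}}$ does), the families $\mathfrak{B}(H_m(\Psi_j),v_j)$ decrease and $[u_j]$ decreases; so $u:=\lim_j u_j$ obeys $\omega+H\le u\le H$, whence $u\in\mathcal{E}_m$, and continuity of $H_m$ along decreasing sequences gives $H_m(u)=\lim_j(\chi_{\Omega_j}\lambda+\chi_{K_j}\nu)=\lambda+\nu=\mu$. Finally, if $\omega\in\mathcal{N}_m$ then $\omega+H\le u\le H$ is exactly the defining condition for $u\in\mathcal{N}_m(H)$.

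The step I expect to be the main obstacle is keeping the sharp two-sided bound $\omega+H\le u\le H$ through the two successive invocations of Proposition \ref{37}: the naive competitor $\Psi_j+v_j$ only yields $u_j\ge 2\omega+H$, so the use of $\max$ against $\omega+H$ — together with the verification that it remains in $\mathfrak{B}$ — is the essential device. A secondary technical point is that $\nu$ may have infinite mass and fail to be compactly supported, which forces the polar part to be cut down to the finite pieces $\chi_{K_j}\nu$ and realized, via Theorem \ref{36}, as Hessian measures of $\mathcal{F}_m$-functions lying above $\omega$ before Proposition \ref{37}(a) can be applied.
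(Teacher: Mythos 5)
Your proposal is correct and follows essentially the same route as the paper: the same decomposition $\mu=fH_m(\phi)+\nu$, Theorem \ref{36} for the polar part, truncation plus the comparison principle for the absolutely continuous part, and Proposition \ref{37}(a)--(b) to assemble $u_j$ (the paper truncates the density by $\min(f,j)$ and uses $\omega^{g_j}$ where you truncate spatially and use $(\omega^{g})^{\chi_{K_j}}$, but these are cosmetic differences, and your $\max(\Psi_j+v_j,\omega+H)$ device is unnecessary since $\omega+H$ itself already lies in $\mathfrak{B}(H_m(\Psi_j),v_j)$). The only point to repair is the claim that $\omega$ ``lies in the family defining $\Psi_j$'': the class $\mathcal{B}_m(\chi_{\Omega_j}\lambda,0)$ consists of locally bounded functions, which $\omega\in\mathcal{E}_m$ need not be, so $\omega\le\Psi_j$ should instead be obtained from the comparison principle (Corollary \ref{14}), exactly as the paper does.
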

\begin{proof}
Note firstly that using the Radon-Nikodym decomposition theorem we obtain that
$$fH_m(\phi)=\tau\chi_{\{\omega>-\infty\}}H_m(\omega)\;\hbox{and}\; \nu=\tau\chi_{\{\omega=-\infty\}}H_m(\omega),\;\hbox{where}\; 0\leq \tau\leq 1\;\hbox{is\;a\;borel\;function}.$$
\par $\textbf{(a)}$ Let $\mu_j=\min(f,j)H_m(\phi),$ then $\mu_j\leq H_m(j^\frac{1}{m}\phi),$ so by \cite[Theorem 2.2]{Chi2} there exists a uniquely determined function $\Psi_j\in\mathcal{E}^0_m$ such that $H_m(\Psi_j)=\mu_j$. By Corollary  \ref{14} $\Psi_j\geq \omega$ and $[\Psi_j]$ is a decreasing sequence. Therefore, $\Psi=\displaystyle\lim_{j\rightarrow+\infty} \Psi_j \in\mathcal{E}_m$ and $H_m(\Psi)=fH_m(\phi).$
On the other hand, $\tau\chi_{\{\omega=-\infty\}}H_m(\omega)$ is a positive measure vanishing outside  $\{\omega=-\infty\}$, then by  Theorem \ref{36} there exists a function $\Phi\in \mathcal{E}_m$ such that  $H_m(\Phi)=\nu$ and $\Phi\geq \omega,$ then $\textbf{(a)}$ is proved.
\par $\textbf{(b)}$ We choose an increasing sequence $[g_j]$ such that $\hbox{Supp}\;g_j \Subset \Omega,$ which converges to $g=\chi_{\{\omega=-\infty\}}\tau$, as $j\rightarrow+\infty.$ By Theorem \ref{36} $\omega^{g_j}\in \mathcal{F}_m$ and $H_m(\omega^{g_j})=g_jH_m(\omega),$ with $[\omega^{g_j}]$ is a decreasing sequence that converges  pointwise to $\omega^g,$ as $j\rightarrow +\infty.$ Moreover $\omega^g\geq \omega.$ Hence by continuity of the Hessian Operator for a decreasing sequences we have
                       $H_m(\omega)=\chi_{\{\omega=-\infty\}}\tau H_m(\omega).$
Let now
$$\mathfrak{B}(H_m(\psi_j),\min(\omega^{g_j}, H))=\{\varphi\in\mathcal{E}_m:\; H_m(\psi_j)\leq H_m(\varphi)\;\; \hbox{and}\;\; \varphi\leq \min(\omega^{g_j}, H)\},$$
and put $u_j:=\sup\left\{\varphi\;/\; \varphi\in \mathfrak{B}(H_m(\psi_j),\min(\omega^{g_j}, H))\right\},$
 then $[u_j]_j$ is a decreasing sequence that converges to some $m$-sh function $u$ as $j$ tends to $+\infty.$ On the other hand, it follows from $(a)$ in Proposition \ref{37} that we can take 
$$v_j=\sup\{\varphi\in \mathcal{SH}_m(\Omega)\;/\; \varphi\leq \min(\omega^{g_j},H)\},$$ this implies that $v_j\in N_m(H)$ and $H_m(v_j)=H_m(\omega^{g_j})$. Now, since 
$$\mathfrak{B}(H_m(\psi_j),\min(\omega^{g_j}, H))=\mathfrak{B}(H_m(\psi_j),v_j)$$
Then by $(b)$ in Proposition \ref{37}, 
$u_j \in \mathcal{N}_m(H)\; \hbox{and}\; H_m(u_j)=H_m(\psi_j)+H_m(\omega^{g_j}).$
By letting  $j\longrightarrow+\infty$ we obtain the desired result. Note that $\omega+H\leq u_j\leq H$, so if $\omega\in\mathcal{N}_m$ then $u\in\mathcal{N}_m(\Omega).$
\end{proof}
\par \textbf{Acknowledgement.} I am grateful to my supervisors Hichame Amal and Said Asserda for support, suggestions and encouragement.  I also would like to thank  Ahmed Zeriahi for very useful discussions at Ibn tofail university and during my visit to Institut de math\'{e}matiques de Toulouse. 

\addcontentsline{toc}{chapter}{Bibliographie}

\par \textbf{ Ibn Tofail University, Faculty of Sciences, Kenitra, Morocco.}
\par \textbf{ E-mail adress: ayoub.el-gasmi@uit.ac.ma }
\end{document}